\documentclass[12pt]{amsart}
\usepackage{amsmath,amsthm,amsfonts,amscd,amssymb,eucal,latexsym,mathrsfs, appendix, yhmath, setspace}
\usepackage[numbers,sort&compress]{natbib}
\usepackage[all,cmtip]{xy}
\usepackage{enumerate}


\setlength{\textwidth}{15cm}
\setlength{\oddsidemargin}{4mm}
\setlength{\evensidemargin}{4mm}

\newtheorem{theorem}{Theorem}[section]

\newtheorem{lemma}[theorem]{Lemma}
\newtheorem{proposition}[theorem]{Proposition}

\theoremstyle{definition}

\newtheorem{remark}[theorem]{Remark}

\newtheorem{problem}[theorem]{Problem}

\newcommand{\htopol}{{\text{\rm h}}_{\text{\rm top}}}

\newcommand{\id}{{\rm id}}

\newcommand{\cG}{{\mathcal G}}

\newcommand{\cU}{{\mathcal U}}

\newcommand{\cX}{{\mathcal X}}

\newcommand{\Nb}{{\mathbb N}}

\newcommand{\Zb}{{\mathbb Z}}

\newcommand{\sM}{{\mathscr M}}

\newcommand{\IN}{{\rm IN}}

\allowdisplaybreaks

\begin{document}

\title[Null actions and RIM non-open extensions]{Null actions and RIM non-open extensions of strongly proximal actions}

\author{Hanfeng Li}
\author{Zhen Rong}

\address{\hskip-\parindent
H.L., Center of Mathematics, Chongqing University,
Chongqing 401331, China. \\
Department of Mathematics, SUNY at Buffalo,
Buffalo, NY 14260-2900, U.S.A.}
\email{hfli@math.buffalo.edu}

\address{\hskip-\parindent
Z.R., College of Mathematics and Statistics, Chongqing University,
Chongqing 401331, China.}
\email{20130601002@cqu.edu.cn}

\date{February 9, 2019}

\subjclass[2010]{37B05, 37B40, 05C15.}
\keywords{Tame action, null action, strongly proximal, RIM extension, open extension, residually finite group, combinatorial independence, Ramsey theorem}

\begin{abstract}
Answering a question of Glasner, we show that any finitely generated nonabelian free group has a minimal null action which is a
RIM non-open extension of an effective strongly proximal action.
\end{abstract}

\maketitle


\section{Introduction} \label{S-introduction}

In this work we consider continuous actions of a countably infinite discrete group $\Gamma$ on a compact metrizable space $X$. Such an action is called {\it tame} \cite{Kohler, Glasner06} if the induced $\Gamma$-action on the space $C(X)$ of all complex-valued continuous functions on $X$ contains no isomorphic dynamical copy of $\ell^1(\Gamma)$, i.e. for any $f\in C(X)$ there is no constant $C>0$ such that
$$ C\|g\|_1 \le \|\sum_{s\in \Gamma}g(s) (sf)\|_\infty\le C^{-1}\|g\|_1$$
for all $g\in \ell^1(\Gamma)$, where $(sf)(x)=f(s^{-1}x)$ for all $x\in X$.
Tameness can also be described in terms of the Ellis enveloping semigroup $E(X, \Gamma)$ of the action $\Gamma\curvearrowright X$, which is the closure of the image of $\Gamma$ in the product space $X^X$. Indeed, the following conditions are equivalent \cite{BFT, Glasner06, GM06, GMU}:
\begin{enumerate}
\item $\Gamma\curvearrowright X$ is tame,
\item $E(X, \Gamma)$ is a separate Fr\'{e}chet compact space, hence with cardinality at most $2^{\aleph_0}$,
\item $E(X, \Gamma)$ does not contain a homeomorphic copy of the Stone-\u{C}ech compactification of $\Nb$,
\item every element of $E(X, \Gamma)$ is a Baire class $1$ function from $X$ to itself.
\end{enumerate}
Actually the equivalence between (2) and (3) is a dynamical version of the Bourgain-Fremlin-Talagrand dichotomy theorem \cite{Glasner06, GM06}.

In \cite{Glasner18} Glasner established a structure theorem for tame minimal actions, extending the results for abelian $\Gamma$ in \cite{Glasner07, Huang, KL07}.
The action $\Gamma\curvearrowright X$ is called {\it minimal} if there is no proper nonempty closed $\Gamma$-invariant subset of $X$.
It is called {\it strongly proximal} if for the induced $\Gamma$-action on the compact space $\sM(X)$ of all Borel probability measures on $X$, every nonempty closed $\Gamma$-invariant subset intersects with $X$ \cite{Glasner75a}.
Given another continuous action of $\Gamma$ on a compact metrizable space $Y$, a $\Gamma$-equivariant continuous surjective map $\pi: X\rightarrow Y$ is called a {\it factor map} or {\it extension}. The extension $\pi$ is said to be {\it strongly proximal} if for every $y\in Y$ and every $\mu\in \sM(X)$ with support contained in $\pi^{-1}(y)$, the orbit closure of $\mu$ in $\sM(X)$ intersects with $X$ \cite{Glasner75a}.
A {\it relative invariant measure} (RIM) for $\pi$ is a continuous $\Gamma$-equivariant map from $Y$ to
$\sM(X)$ such that the support of the image of each $y\in Y$ is contained in $\pi^{-1}(y)$ \cite{Glasner75b}. The extension $\pi$ is called {\it point-distal} if there is a point $x\in X$ with dense orbit such that for any $x\neq x'\in X$ with $\pi(x)=\pi(x')$ the orbit closure of $(x, x')$ does not intersect with the diagonal \cite[VI.4.1]{Vries}.
It is called {\it almost one-to-one} if there is some $x\in X$ with dense orbit in $X$ such that $\pi^{-1}(\pi(x))=\{x\}$ \cite[IV.6.1]{Vries}.
 It is called {\it equicontinuous} or {\it isometric} if, given a compatible metric $\rho$ on $X$, for any $\varepsilon>0$ there exists $\delta>0$ such that for any $x, x'\in X$ with $\pi(x)=\pi(x')$ and $\rho(x, x')<\delta$ one has $\rho(sx, sx')<\varepsilon$ for all $s\in \Gamma$ \cite[V.2.1]{Vries}. Then Glasner's structure theorem states as follows:
for every tame minimal action $\Gamma\curvearrowright X$ there is a commutative diagram of minimal $\Gamma$-actions
\begin{equation*}
\xymatrix
{
& \tilde{X} \ar[dd]_\pi  \ar[dl]_\eta & X^* \ar[l]_-{\theta^*} \ar[d]^{\iota} \\
X & & Z \ar[d]^\sigma\\
& Y & Y^* \ar[l]^\theta
}
\end{equation*}
such that $\Gamma\curvearrowright \tilde{X}$ is tame, $\eta$ is a strongly proximal extension, $\Gamma\curvearrowright Y$ is a strongly proximal action, $\pi$ is a point-distal extension with a unique RIM, $\theta$, $\theta^*$ and $\iota$ are almost one-to-one extensions, and $\sigma$ is an isometric extension.

The map $\pi$ is open exactly when $\theta$ and $\theta^*$ are trivial. In such case the above diagram reduces to
\begin{equation*}
\xymatrix
{
& \tilde{X}  \ar[dl]_\eta  \ar[d]^{\iota} \ar@/^2pc/@{>}^\pi[dd]\\
X &  Z \ar[d]^\sigma\\
& Y
}
\end{equation*}
This leads Glasner to ask the following question \cite[Problem 5.5]{Glasner18}
\begin{problem} \label{P-open}
Let $\Gamma\curvearrowright \tilde{X}$ be a tame minimal action, and let $\Gamma\curvearrowright Y$ be a strongly proximal action. If $\pi:\tilde{X}\rightarrow Y$ is a RIM extension, then must $\pi$ be open?
\end{problem}

When $\Gamma$ is amenable, every minimal strongly proximal action is the trivial action on a singleton \cite[Theorem III.3.1]{Glasner76}. Thus Problem~\ref{P-open} has affirmative answer in this case, even without assuming $\Gamma\curvearrowright \tilde{X}$ to be tame.

Our goal in this paper is to answer Problem~\ref{P-open} negatively. In fact we shall construct counterexample for a weaker statement. Recall that for a sequence $\mathfrak{s}=\{s_n\}_{n\in \Nb}$ in $\Gamma$, the {\it sequence topological entropy} of an action $\Gamma\curvearrowright X$ with respect to $\mathfrak{s}$ and a finite open cover $\cU$ of $X$ is defined as
$$ \htopol(X, \cU; \mathfrak{s})=\limsup_{n\to \infty}\frac{1}{n}\log N(\bigvee_{i=1}^ns_i^{-1}\cU),$$
where $N(\bigvee_{i=1}^ns_i^{-1}\cU)$ denotes the minimal number of elements of $\bigvee_{i=1}^ns_i^{-1}\cU$ needed to cover $X$. The action $\Gamma\curvearrowright X$ is called {\it null} if $\htopol(X, \cU; \mathfrak{s})=0$ for all $\mathfrak{s}$ and $\cU$ \cite{Goodman, HLSY}. It is known that null actions are tame (see Section~\ref{S-indep}).
An action $\Gamma\curvearrowright Y$ is called {\it effective} if for any distinct $s, t$ in $\Gamma$ one has $sy\neq ty$ for some $y\in Y$. Our main result is

\begin{theorem} \label{T-main}
For any nonabelian finitely generated free group $\Gamma$, there are a null (hence tame) minimal action $\Gamma\curvearrowright \tilde{X}$, an effective strongly proximal action $\Gamma\curvearrowright Y$, and a point-distal non-open extension $\tilde{X}\rightarrow Y$ with a unique RIM.
\end{theorem}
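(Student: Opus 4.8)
The plan is to take $Y=\partial\Gamma$, the Gromov boundary of $\Gamma$ (the Cantor set of infinite reduced words, $\Gamma$ acting by left concatenation-and-reduction), which is a minimal, faithful, strongly proximal action of $\Gamma$ by classical facts about free groups; faithfulness is effectiveness. On top of $Y$ we shall construct $\tilde X$ by combining an isometric extension with an almost one-to-one ``Toeplitz-type'' extension, using the residual finiteness of $\Gamma$ in an essential way: fix once and for all a chain of finite-index normal subgroups $\Gamma=N_0\supsetneq N_1\supsetneq\cdots$ with $\bigcap_iN_i=\{e\}$, and write $\widehat\Gamma=\varprojlim_i\Gamma/N_i$ for the corresponding profinite completion. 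The shape of $\tilde X$ is dictated by Glasner's structure theorem recalled above: since $\pi:\tilde X\to Y$ is to be point-distal with a unique RIM but not open, the canonical structure diagram of $\tilde X$ must contain a nontrivial almost one-to-one ingredient, which is exactly what the Toeplitz part supplies, while the isometric part (by the \emph{infinite} profinite group $\widehat\Gamma$) is what makes the RIM nonatomic and hence able to survive the blow-up that destroys openness.

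Concretely I would proceed as follows. First, set $W:=\partial\Gamma\times\widehat\Gamma$ with the diagonal action $s\cdot(\xi,\omega)=(s\xi,\iota(s)\omega)$, $\iota:\Gamma\to\widehat\Gamma$ the canonical map. Because every $N_i$ has finite index it still acts minimally on $\partial\Gamma$, and together with density in $\Gamma$ of the cosets of $N_i$ this gives that $W$ is minimal; $W\to Y$ is an isometric (group) extension with fibre $\widehat\Gamma$, hence open, with unique RIM $\xi\mapsto\delta_\xi\otimes m$ ($m$ Haar on $\widehat\Gamma$, nonatomic); and $W$ is null, since $\partial\Gamma$ is null (see below), $\widehat\Gamma$ is equicontinuous hence null, and sequence entropy is additive over products of covers. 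Second, using the chain $\{N_i\}$, build a point $\omega_0\in\{0,1\}^\Gamma$ that is hierarchically ``$N_i$-periodic off a set of density $\le\varepsilon_i$'' with $\varepsilon_i\to0$ quickly, and let $\tilde X$ be the orbit closure of a suitably chosen point $(w_0,\omega_0)\in W\times\{0,1\}^\Gamma$, arranged so that the first-coordinate projection $\rho:\tilde X\to W$ is a nontrivial almost one-to-one extension whose bad set $B\subseteq W$ meets every fibre $\{\xi\}\times\widehat\Gamma$ in an $m$-null set, and so that $\tilde X\to Y$ fails to be open (this last point is where $\omega_0$ must be correlated with the $\partial\Gamma$-coordinate, so that the fibre jump persists after projecting to $Y$). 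Put $\pi:=(\tilde X\xrightarrow{\rho}W\to Y)$. Minimality of $\tilde X$ is automatic, an almost one-to-one extension of a minimal system being minimal.

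The properties of $\pi$ are then checked as follows. Let $\sigma:W\to\tilde X$ be a measurable section of $\rho$, continuous off $B$. For fixed good $(\xi,\omega)$ one has $\sigma(\xi_n,\omega)\to\sigma(\xi,\omega)$ whenever $\xi_n\to\xi$ (the cluster set lies in the singleton fibre $\rho^{-1}(\xi,\omega)$), while the exceptional $\omega$ form an $m$-null set; hence by dominated convergence $\lambda(\xi):=\sigma_*(\delta_\xi\otimes m)$ defines a continuous, $\Gamma$-equivariant map $Y\to\sM(\tilde X)$ with $\operatorname{supp}\lambda(\xi)\subseteq\pi^{-1}(\xi)$, i.e. a RIM; it is the unique one, by disintegrating an arbitrary RIM of $\pi$ over the (unique) RIM $\delta_\xi\otimes m$ of $W\to Y$ and using $m(B\cap(\{\xi\}\times\widehat\Gamma))=0$. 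Point-distality of $\pi$ is immediate: pick $x_0$ with $\rho^{-1}(\rho(x_0))=\{x_0\}$ (a dense $G_\delta$ of such $x_0$ exists), so $x_0$ has dense orbit; for $x'\ne x_0$ with $\pi(x')=\pi(x_0)$ one has $\rho(x')\ne\rho(x_0)$, and since $W\to Y$ is isometric, hence a distal extension, the orbit closure of $(\rho(x_0),\rho(x'))$ misses the diagonal of $W$, whence the orbit closure of $(x_0,x')$ misses the diagonal of $\tilde X$. Non-openness is arranged at any point of $Y$ lying below the chosen part of the blow-up locus.

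Finally I must show $\partial\Gamma$ and $\tilde X$ are null, i.e. that for every pair $(A_0,A_1)$ of disjoint closed (equivalently clopen) sets the independence sets $J\subseteq\Gamma$ — those with $\bigcap_{s\in J}s^{-1}A_{\sigma(s)}\ne\emptyset$ for every $\sigma\in\{0,1\}^J$ — have cardinality bounded by a constant depending only on $(A_0,A_1)$. For $\partial\Gamma$: if the sets are determined by cylinders of length $L$, the colouring that forces every $s\in J$ into ``read mode'' requires a boundary point whose prefix simultaneously matches $s^{-1}$ to depth $|s|-L$ for all $s\in J$; the prefix-comparability order on $\{s^{-1}:s\in J\}$ therefore has no antichain of size two among its long members (incomparable prefixes cannot be matched at once), so by Dilworth's theorem the $s^{-1}$ are, up to a bounded correction, nested, which pins down all but $O(L)$ of the ``transition windows'' and bounds $|J|$. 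For $\tilde X$ one superimposes three uniformly bounded sources of independence — the boundary direction just treated, the equicontinuous direction $\widehat\Gamma$ (equicontinuous actions have uniformly bounded independence), and the Toeplitz direction — and invokes Ramsey's theorem: colour the $r$-element subsets of a hypothetical large $J$ by which of the three directions is responsible for separating them, pass to a large monochromatic $J'$, and deduce that $J'$ yields a large independence set in a single direction, contradicting that direction's bound. I expect the main obstacle to be exactly this last verification: designing $\omega_0$ — hence the blow-up and the non-openness — so that the Toeplitz direction contributes only bounded independence, so that the Ramsey argument closes; controlling the interaction between the free-group combinatorics of $\partial\Gamma$ and the hierarchical structure of $\omega_0$ is what makes the residually finite chain $\{N_i\}$ indispensable.
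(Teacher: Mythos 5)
Your skeleton matches the paper's: $Y=\partial\Gamma$ strongly proximal and effective, the base $W=\partial\Gamma\times\widehat\Gamma$ minimal (the paper gets this from Glasner's disjointness of weakly non-contractible and minimal strongly proximal actions, but your coset argument also works), the RIM obtained by lifting $\delta_\xi\otimes m$ through an almost one-to-one extension using nonatomicity, uniqueness via the group-extension RIM of $W\to Y$, point-distality from the isometric $\widehat\Gamma$-coordinate, and non-openness by correlating the blow-up locus with the $\partial\Gamma$-coordinate. All of these steps are sound and essentially the ones in the paper.

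The genuine gap is that the object carrying all of this is never constructed. You posit a Toeplitz-type point $\omega_0$ and an orbit closure $\tilde X\subseteq W\times\{0,1\}^\Gamma$ ``arranged so that'' the bad set is fiberwise Haar-null, non-openness survives projection to $Y$, and the Toeplitz direction contributes only bounded independence --- and you yourself identify this last verification as the main obstacle. That verification is the entire technical content of the theorem, not a final check: the paper does it by taking McMahon's blow-up of a \emph{single free orbit}, with a very specific discontinuity pattern --- $u_n=a^nba^{-n}b^{-1}$, $D_n=V_{u_n}$, $\gamma_n\in\Gamma_{n-1}\setminus\Gamma_n$ chosen with $\gamma_n^2\notin\Gamma_n$, $C_n=\pi_n^{-1}(\gamma_n\Gamma_n)$, and $X_+=\bigcup_n D_n\times C_n$ shrinking to $(a^\infty,e_\Gamma)$ --- and then reduces nullness of $\tilde X=X_f$ (via the IN-pair calculus, Proposition~\ref{P-construction null}) to the single combinatorial claim that $(X_+,X_-)$ has no arbitrarily large finite independence sets. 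Establishing that bound is the bulk of Section~\ref{S-free}: a ten-case classification of pairs $(s_1,s_2)$ with $s_1^{-1}X_+\cap s_2^{-1}X_+\neq\emptyset$ by the reduced-word form of $s_1s_2^{-1}$, nine separate independence bounds, and only then a Ramsey argument over those word-combinatorial types (not over ``which of three directions separates,'' as in your outline). Your density-based Toeplitz blow-up is also a strictly riskier design than a single-orbit blow-up: its bad set is typically uncountable, such extensions need not be tame at all (Toeplitz-like systems can even have positive entropy), and the fiberwise nullity of $B$, the non-openness over $Y$, and the bounded-independence property would each have to be engineered into $\omega_0$ and then proved --- none of which the proposal does. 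As written, the proof establishes the easy surrounding structure but leaves the theorem's core unproved. (The sketch of nullness of $\partial\Gamma$ via Dilworth is also only heuristic, though that fact is available by citation, e.g.\ \cite[Corollary 12.3]{KL07}.)
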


There are two ingredients in our construction. McMahon \cite[Example 3.2.(1)]{McMahon78} constructed examples of RIM non-open extension $\Gamma\curvearrowright \tilde{X}$ of minimal equicontinuous actions $\Gamma\curvearrowright Y$ for $\Gamma=G\times \Zb$, where $G$ is any dense countable subgroup of the $p$-adic integer group $\Zb_p$. These examples do not provide counterexamples for Problem~\ref{P-open} for three reasons. The first is that $\Gamma$ in these examples is abelian. The second is that $\Gamma\curvearrowright Y$ is equicontinuous instead of strongly proximal. The third is that it is not clear whether $\Gamma\curvearrowright \tilde{X}$ in these examples are tame or not. Among these difficulties, the third one is most difficult. To prove Theorem~\ref{T-main} we modify McMahon's construction to handle these three difficulties. Our second ingredient is the combinatorial independence developed in \cite{KL07}. It enables us to turn the question of checking tameness or nullness to a combinatorial problem. The latter is still nontrivial but manageable.

This paper is organized as follows. We recall the basic of combinatorial independence in Section~\ref{S-indep}. McMahon's construction is recalled in Section~\ref{S-construction}. As a showcase of our technique, we construct some null minimal actions for every residually finite group in Section~\ref{S-RF}. Theorem~\ref{T-main} is proved in Section~\ref{S-free}.

Throughout this paper $\Gamma$ will be a countably infinite group with identity element $e_\Gamma$. All $\Gamma$-actions are assumed to be continuous actions on compact metrizable spaces unless specified otherwise. For each compact metrizable space $X$, we denote by $\sM(X)$ the space of all Borel probability measures on $X$, equipped with the weak${}^*$-topology.

\noindent{\it Acknowledgments.}
H.~L. was partially supported by NSF and NSFC grants. We are grateful to Eli Glasner for helpful comments.

\section{Combinatorial independence} \label{S-indep}

In this section we recall the combinatorial independence description of tameness and nullness \cite{KL07, KL}. Let $\Gamma$ act on a compact metrizable space $X$ continuously.

Let $(A_1, A_2)$ be a pair of subsets of $X$. We say that a set $M\subseteq \Gamma$ is an {\it independence set} for $(A_1, A_2)$ if the collection $\{(s^{-1}A_1, s^{-1}A_2): s\in M\}$ is independent in the sense that $\bigcap_{s\in F}s^{-1}A_{\omega(s)}\neq \emptyset$ for every nonempty finite set $F\subseteq M$ and $\omega\in \{1, 2\}^F$.

We say that a pair $(x_1, x_2)\in X^2$ is an {\it IT-pair} if for every product neighborhood $U_1\times U_2$ of $(x_1, x_2)$ the pair $(U_1, U_2)$ has an infinite independence set.

The following is the combinatorial independence characterization of tameness \cite[Proposition 6.4]{KL07} \cite[Proposition 8.14]{KL}. It's based on the proof of Rosenthal's $\ell_1$ theorem \cite{Rosenthal74, Rosenthal78}.

\begin{proposition} \label{P-tame}
The action $\Gamma\curvearrowright X$ is tame if and only if $X$ has no non-diagonal IT-pairs.
\end{proposition}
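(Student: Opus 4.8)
\emph{Proof proposal.} The plan is to prove the two implications separately, each by contraposition; the real work is in the direction ``not tame $\implies$ non-diagonal IT-pair'', which is the dynamical form of Rosenthal's $\ell_1$ theorem. \textbf{For ``IT-pair $\Rightarrow$ non-tame'':} suppose $(x_1,x_2)$ with $x_1\neq x_2$ is an IT-pair. Pick $f\in C(X)$ with $0\le f\le 1$, $f(x_1)=0$, $f(x_2)=1$, and set $A_1=\{f<1/3\}\ni x_1$, $A_2=\{f>2/3\}\ni x_2$; let $M=\{s_n:n\in\Nb\}$ be an infinite independence set for $(A_1,A_2)$, with the $s_n$ distinct. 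Since $(sf)(x)=f(s^{-1}x)$ we have $x\in s^{-1}A_1\iff (s^{-1}f)(x)<1/3$ and $x\in s^{-1}A_2\iff (s^{-1}f)(x)>2/3$, so independence says: for every finite $F\subseteq\Nb$ and $\omega\in\{1,2\}^F$ there is $x\in X$ with $(s_n^{-1}f)(x)<1/3$ if $\omega(n)=1$ and $(s_n^{-1}f)(x)>2/3$ if $\omega(n)=2$. Let $h=f-1/2\in C(X)$; since $\Gamma$ fixes constants, $s_n^{-1}h=s_n^{-1}f-1/2$. Given finitely supported $\varphi\in\ell^1(\Nb)$, take $\omega(n)=2$ for $\varphi(n)>0$ and $\omega(n)=1$ otherwise; the resulting $x$ satisfies $\varphi(n)\,(s_n^{-1}h)(x)\ge|\varphi(n)|/6$ for every $n$ in the support of $\varphi$, so $\bigl\|\sum_n\varphi(n)\,s_n^{-1}h\bigr\|_\infty\ge\|\varphi\|_1/6$, while $\bigl\|\sum_n\varphi(n)\,s_n^{-1}h\bigr\|_\infty\le\|h\|_\infty\|\varphi\|_1$. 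Thus $(s_n^{-1}h)_n$, a sequence in the orbit of $h$, is equivalent to the unit vector basis of $\ell^1$; by the $\ell_1$-characterization of tameness recalled in Section~\ref{S-introduction} (equivalently, $E(X,\Gamma)$ then contains a copy of $\beta\Nb$), the action is not tame.

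\textbf{For ``non-tame $\Rightarrow$ IT-pair'':} assume $\Gamma\curvearrowright X$ is not tame. By Rosenthal's $\ell_1$ theorem together with the reformulations of tameness, there are $f\in C(X)$ and distinct $s_n\in\Gamma$ with $(s_nf)_n$ equivalent to the $\ell^1$-basis; replacing $f$ by its real or imaginary part and passing to a subsequence, we may take $f$ real-valued. Now comes the key combinatorial step, lifted from the proof of Rosenthal's theorem: since $(s_nf)_n$ has no weakly Cauchy subsequence, one extracts rationals $a<b$ and an infinite $L\subseteq\Nb$ such that the open sets $\{(s_n^{-1}f)<a\}$ and $\{(s_n^{-1}f)>b\}$, $n\in L$, form an independent family. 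Writing $A_1=\{f<a\}$ and $A_2=\{f>b\}$, these sets are exactly $s_n^{-1}A_1$ and $s_n^{-1}A_2$, so $\{s_n:n\in L\}$ is an infinite independence set for $(A_1,A_2)$; moreover $A_1,A_2$ are nonempty (otherwise some member of the family is empty) and $\overline{A_1}\cap\overline{A_2}=\emptyset$ since $a<b$.

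It remains to promote $(A_1,A_2)$ to an IT-pair by the standard maximality argument of \cite{KL07}: among pairs $(B_1,B_2)$ of closed sets with $B_i\subseteq\overline{A_i}$ that still admit an infinite independence set, ordered by componentwise inclusion, pick a minimal one. Existence of a minimal element needs a compactness argument showing that ``having an infinite independence set'' passes to the intersection of a decreasing chain, and a Ramsey-type splitting argument then shows a minimal pair must have the form $(\{x_1\},\{x_2\})$. For any neighbourhoods $U_1\ni x_1$, $U_2\ni x_2$ the same set is an independence set for $(U_1,U_2)$, so $(x_1,x_2)$ is an IT-pair; it is non-diagonal because $x_i\in\overline{A_i}$ and $\overline{A_1}\cap\overline{A_2}=\emptyset$.

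\textbf{Main obstacle.} The crux is the extraction of the independent pair $(a<b,\,L)$ from an $\ell^1$-basis-equivalent sequence: this is precisely the combinatorial core of Rosenthal's $\ell_1$ theorem and is where all the genuine work sits. The maximality/Ramsey argument passing from a pair of sets to a pair of points, and the bookkeeping translating between independence sets and orbits of functions, are routine but must be set up with care.
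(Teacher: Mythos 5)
Your first direction (non-diagonal IT-pair $\Rightarrow$ not tame) is fine, and your overall route is the same Rosenthal-based one as the source the paper actually relies on: the paper does not prove this proposition but quotes it from \cite{KL07} (Proposition 6.4) and \cite{KL} (Proposition 8.14). Deferring the extraction of $a<b$ and an infinite independent family from a sequence with no pointwise convergent subsequence to Rosenthal's argument \cite{Rosenthal74} is acceptable as a citation, since that is literally the combinatorial lemma in his proof (after the routine reduction to real-valued functions and real coefficients, which you gesture at correctly).

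The genuine gap is in your localization step, from the pair of disjoint closed sets $(\overline{A_1},\overline{A_2})$ with an infinite independence set to a pair of points. Your Zorn-type scheme needs the property ``has an infinite independence set'' to pass to the intersection of a decreasing chain of closed pairs, but each member of the chain comes with its own independence set and no compactness principle produces a common infinite one for the intersection; this step is unjustified, and it is also not needed in the standard argument, which instead covers the closed sets by finitely many closed subsets of mesh $<1/n$ and iterates, so that the property is only ever used at finitely many stages while the IT-condition on neighborhoods is checked against those stages. More importantly, in either formulation the real content is the splitting lemma for \emph{infinite} independence sets: if $(B_1\cup B_1',B_2)$ has an infinite independence set, then so does $(B_1,B_2)$ or $(B_1',B_2)$. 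You dismiss this as ``a Ramsey-type splitting argument,'' but it is a nontrivial statement: the finite, positive-proportion splitting underlying the IN-theory does not yield an infinite independence set for a refined pair, and the paper's Proposition~\ref{P-null}(1) covers only the IN case, so it cannot be borrowed. Since both your existence-of-a-minimal-pair step and your ``minimal pair is a pair of singletons'' step rest on these unproved points, the direction ``not tame $\Rightarrow$ non-diagonal IT-pair'' is not actually established; to complete it you should prove (or precisely cite, e.g. from \cite{KL07,KL}) the infinite splitting lemma and replace the Zorn argument by the iterated shrinking-cover construction.
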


We say that a pair $(x_1, x_2)\in X^2$ is an {\it IN-pair} if for every product neighborhood $U_1\times U_2$ of $(x_1, x_2)$ the pair $(U_1, U_2)$ has arbitrarily large finite  independence sets. We denote by $\IN_2(X, \Gamma)$ the set of all IN-pairs.

The following summarizes the basic properties of IN-pairs and gives the combinatorial independence characterization of nullness \cite[Proposition 5.4]{KL07}.

\begin{proposition} \label{P-null}
The following are true:
\begin{enumerate}
\item Let $(A_1, A_2)$ be a pair of closed subsets of $X$ which has arbitrarily large finite independence sets. Then there exists an IN-pair $(x_1, x_2)$ with $x_j\in A_j$ for $j=1, 2$.

\item The action $\Gamma\curvearrowright X$ is null if and only if $X$ has no non-diagonal IN-pairs.

\item  $\IN_2(X, \Gamma)$ is a closed $\Gamma$-invariant subset of $X^2$.

\item Let $\pi: (X, \Gamma)\rightarrow (Y, \Gamma)$ be a factor map. Then $\pi^2(\IN_2(X, \Gamma))=\IN_2(Y, \Gamma)$.
\end{enumerate}
\end{proposition}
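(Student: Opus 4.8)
The plan is to prove the four assertions in the order (3), (1), (4), (2): part (3) is elementary and is used in (4), while part (1) is the technical core on which both (4) and (2) rest. For (3), $\Gamma$-invariance follows by translating independence sets: if $(x_1,x_2)$ is an IN-pair, $g\in\Gamma$, and $U_1\times U_2$ is a product neighbourhood of $(gx_1,gx_2)$, then $g^{-1}U_1\times g^{-1}U_2$ is a product neighbourhood of $(x_1,x_2)$, so $(g^{-1}U_1,g^{-1}U_2)$ has arbitrarily large finite independence sets $M$, and since $s^{-1}g^{-1}A=(gs)^{-1}A$ the translate $gM$ is an independence set of the same size for $(U_1,U_2)$. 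Closedness is immediate: if $(x_1^{(k)},x_2^{(k)})\to(x_1,x_2)$ with each pair an IN-pair, then any product neighbourhood of $(x_1,x_2)$ eventually contains $(x_1^{(k)},x_2^{(k)})$ and hence inherits arbitrarily large finite independence sets.

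For part (1), fix a compatible metric and for each $m$ a finite cover $\mathcal{D}_m$ of $X$ by closed sets of diameter $<1/m$. Starting from $(A_1^{(0)},A_2^{(0)})=(A_1,A_2)$ I would inductively build closed pairs $(A_1^{(m)},A_2^{(m)})$ with $A_j^{(m)}\subseteq A_j^{(m-1)}$, $\diam A_j^{(m)}<1/m$, still possessing arbitrarily large finite independence sets. The inductive step rests on the combinatorial claim: if $(\mathbb{A}_1,\mathbb{A}_2)$ has an independence set $M$ of size $N$ and $\mathbb{A}_1=\bigcup_{i=1}^pB_i$, $\mathbb{A}_2=\bigcup_{j=1}^qC_j$ with all sets closed, then provided $N$ is large enough (in terms of $p,q$ and a target $b$) there exist $i_0\le p$, $j_0\le q$ and $M'\subseteq M$ with $|M'|\ge b$ such that $M'$ is an independence set for $(B_{i_0},C_{j_0})$. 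To prove the claim, for each $\omega\in\{1,2\}^M$ choose a witness $z_\omega\in\bigcap_{s\in M}s^{-1}\mathbb{A}_{\omega(s)}$ and record, for each $s\in M$, the least index of a $B_i$ (resp.\ $C_j$) containing $sz_\omega$ according as $\omega(s)=1$ or $2$; this defines an injection $\{1,2\}^M\hookrightarrow\{1,\dots,p\}\sqcup\{1,\dots,q\}$-valued functions on $M$, so the image $S$ has cardinality $2^{N}$. A Sauer--Shelah / Karpovsky--Milman type counting estimate then yields, once $2^{N}$ exceeds the relevant polynomial in $N$, a subset $M'\subseteq M$ of size $\ge b$ on which $S$ realizes every value; fixing any $i_0,j_0$ and reading off the associated witnesses gives an independence set for $(B_{i_0},C_{j_0})$. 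Applying this at stage $m$ with $B_i=D\cap A_1^{(m-1)}$ and $C_j=D'\cap A_2^{(m-1)}$ for $D,D'\in\mathcal{D}_m$, and using finiteness of $\mathcal{D}_m$, some fixed pair $D^*,D'^*$ works for arbitrarily large $b$; set $A_j^{(m)}$ accordingly. Then $\bigcap_mA_j^{(m)}$ is a single point $x_j\in A_j$, and every product neighbourhood of $(x_1,x_2)$ contains $A_1^{(m)}\times A_2^{(m)}$ for large $m$, hence has arbitrarily large finite independence sets, so $(x_1,x_2)$ is the desired IN-pair. This combinatorial claim — passing from one large independence set to a proportionally large one for a finer pair — is the main obstacle; everything else is neighbourhood bookkeeping and compactness.

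For part (4), the inclusion $\pi^2(\IN_2(X,\Gamma))\subseteq\IN_2(Y,\Gamma)$ is routine: given $(x_1,x_2)\in\IN_2(X,\Gamma)$ and a neighbourhood $V_1\times V_2$ of $(\pi x_1,\pi x_2)$, the pair $(\pi^{-1}V_1,\pi^{-1}V_2)$ has arbitrarily large finite independence sets, and pushing each relevant intersection forward by $\pi$, using equivariance and surjectivity, shows the same set is an independence set for $(V_1,V_2)$. For the reverse inclusion, given $(y_1,y_2)\in\IN_2(Y,\Gamma)$ choose neighbourhoods $V_j^{(k)}$ of $y_j$ with $\bigcap_k\overline{V_j^{(k)}}=\{y_j\}$; pulling back, $(\pi^{-1}V_1^{(k)},\pi^{-1}V_2^{(k)})$ and hence $(\overline{\pi^{-1}V_1^{(k)}},\overline{\pi^{-1}V_2^{(k)}})$ has arbitrarily large finite independence sets, so by (1) there is an IN-pair $(x_1^{(k)},x_2^{(k)})$ in this closed product, with $\pi(x_j^{(k)})\in\overline{V_j^{(k)}}$; a convergent subsequence gives, via (3), an IN-pair $(x_1,x_2)$ with $\pi(x_j)\in\bigcap_k\overline{V_j^{(k)}}=\{y_j\}$. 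Finally, part (2): if $X$ has a non-diagonal IN-pair $(x_1,x_2)$, pick $U_1\ni x_1$, $U_2\ni x_2$ with $\overline{U_1}\cap\overline{U_2}=\emptyset$; then $\mathcal{V}=\{X\setminus\overline{U_1},X\setminus\overline{U_2}\}$ is a finite open cover and, for any independence set $M$ for $(U_1,U_2)$, each of the $2^{|M|}$ members of $\bigvee_{s\in M}s^{-1}\mathcal{V}$ is needed in any subcover (using disjointness of the closures to locate the witnesses $z_\omega$), so $N(\bigvee_{s\in M}s^{-1}\mathcal{V})\ge 2^{|M|}$; concatenating independence sets $M_k$ with $|M_k|\to\infty$ fast enough into a single sequence $\mathfrak s$ gives $\htopol(X,\mathcal{V};\mathfrak s)>0$, so $X$ is not null. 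Conversely, if $X$ is not null, the standard reduction of positive sequence entropy of a finite open cover to independence of a pair of disjoint closed sets (again of Sauer--Shelah type; see \cite{KL07}) produces such a pair with arbitrarily large finite independence sets, and (1) then supplies a non-diagonal IN-pair.
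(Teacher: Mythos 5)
The paper itself gives no argument for this proposition---it is quoted from \cite[Proposition 5.4]{KL07}---so the real comparison is with the Kerr--Li argument, and where your sketch gives details it follows essentially that route and is correct. The translation and limit arguments for (3), the push-forward and pull-back arguments for (4) (with the small point that the neighbourhoods $V_j^{(k)}$ should be taken nested so that the limit of the subsequence lies in every $\overline{V_j^{(k)}}$), the witness-counting bound $N(\bigvee_{s\in M}s^{-1}\cV)\ge 2^{|M|}$ for the ``non-diagonal IN-pair $\Rightarrow$ not null'' half of (2), and, for (1), the nested shrinking closed pairs driven by your combinatorial claim are all sound. In particular the record map is genuinely injective from $\{1,2\}^M$ into $(\{1,\dots,p\}\sqcup\{1,\dots,q\})^M$ because the disjoint union remembers $\omega(s)$, and the Karpovsky--Milman form of the Sauer--Shelah lemma then produces a subset of size proportional to $|M|$ on which every pattern, in particular every constant-$(i_0,j_0)$ pattern, is realized; finiteness of $\cD_m$ and pigeonhole complete the induction. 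This is the standard mechanism behind \cite[Proposition 5.4]{KL07}.

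The one substantive elision is the converse half of (2), which you dispatch with ``the standard reduction \dots see \cite{KL07}.'' That sentence hides the step where an additional idea is required. If $\cU=\{U_1,\dots,U_k\}$ is a finite open cover with positive sequence entropy along some sequence, the natural closed sets $X\setminus U_i$ and $X\setminus U_j$ are in general \emph{not} disjoint, and disjointness is exactly what makes the IN-pair supplied by (1) non-diagonal; applied to an overlapping pair, (1) could a priori return a diagonal pair and prove nothing. So one cannot simply ``take complements of two cover elements'': one needs either a reduction to a two-element cover (whose complements are automatically disjoint) together with a counting argument over the augmented alphabet $\{0,1,2\}$ and the Karpovsky--Milman lemma, or the finer combinatorial lemmas of \cite{KL07} which deliver two indices and independence for a genuinely disjoint (or suitably shrunken) closed pair. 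This step is of comparable weight to the claim you did prove in (1). Citing \cite{KL07} for it is defensible---the paper cites that reference for the entire proposition---but as a self-contained proof this direction of (2) is not yet established by your sketch, and the disjointness/non-diagonality issue is the concrete point you would need to address.
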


It follows from Propositions~\ref{P-tame} and \ref{P-null} that null actions are tame. We remark that there are minimal tame nonnull subshifts for $\Zb$ \cite[Section 11]{KL07}.

\section{McMahon's construction} \label{S-construction}

We recall McMahon's construction of RIM extensions in \cite{McMahon76, McMahon78}. See \cite[VI.6.5]{Vries} for details.

Let $\Gamma$ act on a compact metrizable space $X$ continuously and assume that the action is minimal and $X$ consists of more than one orbit. Let $x_1$ be a point of $X$ such that the stabilizer group $\{s\in \Gamma: sx_1=x_1\}$ of $x_1$ is trivial.
Also let  $f: X\setminus \{x_1\}\rightarrow \{1, -1\}$ be a continuous function such that it cannot be extended to $X$ continuously. Then $\Gamma$ has a minimal continuous action on some compact metrizable space $X_f$ with the following properties:
\begin{enumerate}
\item there is a continuous $\Gamma$-equivariant surjective map $\pi_f: X_f\rightarrow X$ such that $\pi_f^{-1}(x)$ consists of one point if $x\in X\setminus \Gamma x_1$ and two points if $x\in \Gamma x_1$,
\item the function $f\circ\pi_f$ on $X_f\setminus \pi_f^{-1}(x_1)$ extends to a continuous function $\tilde{f}: X_f\rightarrow \{1, -1\}$.
\end{enumerate}
Furthermore, as an extension of $\Gamma\curvearrowright X$, the action $\Gamma\curvearrowright X_f$ is unique up to conjugacy.
Indeed, $X_f$ is constructed as follows. Take a point $x_0\in X\setminus \Gamma x_1$ and denote by $Z$ the Stone-\u{C}ech compactification of $\Gamma x_0$ equipped with the relative topology from $\Gamma x_0\subseteq X$. Then the induced $\Gamma$-action on $Z$ is minimal and $f\circ \pi_Z|_{\Gamma x_0}$ extends uniquely to a continuous function $f_Z: Z\rightarrow \{1, -1\}$, where $\pi_Z: Z\rightarrow X$ is the factor map extending the embedding $\Gamma x_0\hookrightarrow X$. Define an equivalence relation on $Z$ by $z_1\sim z_2$ if $f_Z(sz_1)=f_Z(sz_2)$ for all $s\in \Gamma$ and $\pi_Z(z_1)=\pi_Z(z_2)$. This equivalent relation is $\Gamma$-invariant and closed. Then $X_f$ is defined as the quotient space $Z/\sim$. Though $Z$ is possibly not metrizable, $X_f$ is. Since $z_1\sim z_2$ implies that $\pi_Z(z_1)=\pi_Z(z_2)$, there is a unique continuous map $\pi_f: X_f\rightarrow X$ such that the composition map $Z\rightarrow X_f\overset{\pi_f}\rightarrow X$ is equal to $\pi_Z$. As both $Z\rightarrow X_f$ and $\pi_Z$ are $\Gamma$-equivariant, so is $\pi_f$.
Since $z_1\sim z_2$ implies that $f_Z(z_1)=f_Z(z_2)$, there is also a unique continuous function $\tilde{f}: X_f\rightarrow \{1, -1\}$ such that the composition function $Z\rightarrow X_f\overset{\tilde{f}}\rightarrow \{1, -1\}$ is equal to $f_Z$. Then $f\circ \pi_f=\tilde{f}$ on the image of $\Gamma x_0$ under the quotient map $Z\rightarrow X_f$.
As this image is dense in $X_f$ and both $f\circ \pi_f$ and the restriction of $\tilde{f}$ are continuous on $X_f\setminus \pi_f^{-1}(x_1)$, one has $f\circ \pi_f=\tilde{f}$ on $X_f\setminus \pi_f^{-1}(x_1)$. We remark that another way of defining $X_f$ is to let it be the Gelfand spectrum of the $\Gamma$-invariant $C^*$-subalgebra of $\ell^\infty(\Gamma)$ generated by functions of the form $t\mapsto g(tx_0)$ for $t\in \Gamma$ and $g\in C(X)$ or $g=f$, where $\ell^\infty(\Gamma)$ is equipped with the supremum norm and pointwise multiplication, addition and conjugation and $\Gamma$ acts on $\ell^\infty(\Gamma)$ via $(sh)(t)=h(s^{-1}t)$ for all $s, t\in \Gamma$ and $h\in \ell^\infty(X)$.

We take an excursion to discuss when $\Gamma\curvearrowright X_f$ is null.
Set $X_{f, +}=\tilde{f}^{-1}(1)$ and $X_{f, -}=\tilde{f}^{-1}(-1)$. Also set $X_+=f^{-1}(1)$ and $X_-=f^{-1}(-1)$.
The following result tells us how to characterize nullness of $\Gamma\curvearrowright X_f$ from information on $\Gamma\curvearrowright X$ and $f$.

\begin{proposition} \label{P-construction null}
The action $\Gamma\curvearrowright X_f$ is null if and only if $\Gamma\curvearrowright X$ is null and the pair $(X_+, X_-)$ does not have arbitrarily large finite independence sets.
\end{proposition}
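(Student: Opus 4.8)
The plan is to prove the two implications separately, in each direction reducing nullness to the combinatorial‑independence statements of Section~\ref{S-indep}. The one structural input needed about the construction is a description of $X_{f,\pm}$. Write $\{p_1,p_2\}=\pi_f^{-1}(x_1)$. Since $\tilde f=f\circ\pi_f$ off $\pi_f^{-1}(x_1)$ and $\pi_f$ is injective over $X\setminus\Gamma x_1$ while two‑to‑one over $\Gamma x_1$, for every $s\neq e_\Gamma$ the two points of $\pi_f^{-1}(sx_1)$ carry the same value $f(sx_1)$ of $\tilde f$; and $\tilde f(p_1)\neq\tilde f(p_2)$, for if they agreed then, lifting to $Z$, $f_Z(s\,\cdot\,)$ would agree on representatives of $p_1,p_2$ that already have the same $\pi_Z$‑value, forcing $p_1=p_2$. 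Relabelling so that $\tilde f(p_1)=1$, this yields
\[
X_{f,+}=\pi_f^{-1}(X_+)\sqcup\{p_1\},\qquad X_{f,-}=\pi_f^{-1}(X_-)\sqcup\{p_2\},
\]
with these four sets partitioning $X_f$ and $X_{f,\pm}$ clopen.

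For necessity I would argue the contrapositive of each conclusion. Nullness of $\Gamma\curvearrowright X_f$ gives nullness of $\Gamma\curvearrowright X$ at once from Proposition~\ref{P-null}(4) (the image of the diagonal is the diagonal) and Proposition~\ref{P-null}(2). If $(X_+,X_-)$ had arbitrarily large finite independence sets, then so would $(X_{f,+},X_{f,-})$: taking $\pi_f$‑preimages sends an independence set for $(X_+,X_-)$ to one for $(X_{f,+},X_{f,-})$, using only $\pi_f^{-1}(X_\pm)\subseteq X_{f,\pm}$ and surjectivity of $\pi_f$. As $X_{f,+}$ and $X_{f,-}$ are disjoint closed sets, Proposition~\ref{P-null}(1) would then produce a non‑diagonal IN‑pair, contradicting nullness of $X_f$ by Proposition~\ref{P-null}(2).

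For sufficiency, assume $\Gamma\curvearrowright X$ is null and $(X_+,X_-)$ has bounded finite independence sets, and suppose $X_f$ has a non‑diagonal IN‑pair $(y_1,y_2)$. If $\pi_f(y_1)\neq\pi_f(y_2)$, then $(\pi_f(y_1),\pi_f(y_2))$ is a non‑diagonal IN‑pair of $X$ by Proposition~\ref{P-null}(4), contradicting nullness of $X$. Otherwise $\pi_f(y_1)=\pi_f(y_2)$ lies in $\Gamma x_1$ (the only non‑injectivity locus of $\pi_f$); translating by a group element ($\Gamma$‑invariance of $\IN_2$, Proposition~\ref{P-null}(3)) and using the symmetry of the IN‑pair condition, I would reduce to $(y_1,y_2)=(p_1,p_2)$. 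Since $p_1\in X_{f,+}$ and $p_2\in X_{f,-}$ are clopen, $X_{f,+}\times X_{f,-}$ is a product neighbourhood of $(p_1,p_2)$, so $(X_{f,+},X_{f,-})$ has arbitrarily large finite independence sets; it remains to push these down to $(X_+,X_-)$, which will contradict the hypothesis.

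The push‑down is the step I expect to be the main obstacle, since $X_{f,\pm}$ differ from $\pi_f^{-1}(X_\pm)$ by the two ``defect'' points $p_1,p_2$. Given a finite independence set $M$ for $(X_{f,+},X_{f,-})$ of size $n$, for $\omega\in\{1,-1\}^M$ the set $Y_\omega=\bigcap_{s\in M}s^{-1}X_{f,\omega(s)}$ is nonempty and determines $\omega$ via $\omega(s)=\tilde f(sy)$ for $y\in Y_\omega$. Triviality of the stabilizer of $x_1$ forces $sy\in\{p_1,p_2\}$ for at most one $s\in M$ per $y$; calling $\omega$ \emph{bad} if every member of $Y_\omega$ has such an $s$ (equivalently $Y_\omega\subseteq\pi_f^{-1}(\{s^{-1}x_1:s\in M\})$, a set of at most $2n$ points) and \emph{good} otherwise, one gets at most $2n$ bad $\omega$ since each is determined by a point of that finite set. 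For a good $\omega$, choosing $y\in Y_\omega$ off all $\{p_1,p_2\}$‑translates gives $\pi_f(y)\in\bigcap_{s\in M}s^{-1}X_{\omega(s)}$, and a fortiori $\bigcap_{s\in M'}s^{-1}X_{\omega(s)}\neq\emptyset$ for every $M'\subseteq M$. A pigeonhole then finishes it: pick $M'\subseteq M$ with $2^{\,n-|M'|}>2n$; every colouring of $M'$ has a good extension, so $M'$ is an independence set for $(X_+,X_-)$, of size $n-\lceil\log_2(2n+1)\rceil\to\infty$ as $n\to\infty$. (One could instead invoke Sauer--Shelah to lose only two elements of $M$, but this is unnecessary.)
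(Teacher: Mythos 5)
Your proposal is correct, and its overall architecture (reduce everything to IN-pairs via Proposition~\ref{P-null}, treat the two directions separately, and isolate the fiber $\pi_f^{-1}(x_1)$ as the only obstruction) is the same as the paper's, which splits the statement into Lemmas~\ref{L-null1} and \ref{L-null}. The one place where you genuinely diverge is the transfer of large independence sets from $(X_{f,+},X_{f,-})$ down to $(X_+,X_-)$, i.e.\ the ``only if'' part of Lemma~\ref{L-null}. The paper pairs the elements of an independence set $M$ of size $2m$ as $(s_{2n-1},s_{2n})$ and, using the trivial stabilizer of $x_1$, chooses the value of $\omega$ at $s_{2n}$ so as to force $s_{2n-1}z_\omega$ off the defect fiber; this keeps the odd-indexed half of $M$ as an independence set for $(X_+,X_-)$, losing a factor of $2$. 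You instead count: triviality of the stabilizer shows each witness point meets the defect fiber under at most one $s\in M$, so at most $2n$ colorings are ``bad,'' and discarding roughly $\log_2 n$ elements of $M$ guarantees every coloring of the remainder has a good extension. Both arguments hinge on the same two facts (the stabilizer of $x_1$ is trivial, and $\tilde f$ separates the two points of $\pi_f^{-1}(x_1)$ -- a point the paper asserts implicitly in Lemma~\ref{L-null1} and you prove explicitly from the construction of $X_f$ as $Z/\!\sim$); yours loses only logarithmically many elements rather than half, though for the purpose of ``arbitrarily large finite independence sets'' the quantitative difference is immaterial.
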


Note that as the sets $X_+$ and $X_-$ are not closed, the condition ``not having arbitrarily large finite independence sets'' is not automatically satisfied if $\Gamma\curvearrowright X$ is null.

Proposition~\ref{P-construction null} follows from Lemmas~\ref{L-null1} and \ref{L-null} below.

\begin{lemma} \label{L-null1}
The action $\Gamma\curvearrowright X_f$ is null if and only if $\Gamma\curvearrowright X$ is null and the pair $(X_{f,+}, X_{f,-})$ does not have arbitrarily large finite independence sets.
\end{lemma}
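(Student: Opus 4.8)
The plan is to show that the extension $\pi_f \colon X_f \to X$ preserves nullness in a controlled way, so that the only new obstruction to nullness of $\Gamma \curvearrowright X_f$ comes from the pair $(X_{f,+}, X_{f,-})$ separating the two points in the doubled fibers $\pi_f^{-1}(\Gamma x_1)$. First I would handle the "only if" direction, which is the easy half: if $\Gamma \curvearrowright X_f$ is null, then $\Gamma \curvearrowright X$ is null because $\pi_f$ is a factor map and nullness passes to factors (this is immediate from Proposition~\ref{P-null}(2) together with Proposition~\ref{P-null}(4), or more directly from the fact that open covers of $X$ pull back to open covers of $X_f$). For the second conclusion in that direction, note that $X_{f,+} = \tilde f^{-1}(1)$ and $X_{f,-} = \tilde f^{-1}(-1)$ are disjoint \emph{closed} subsets of $X_f$; if they had arbitrarily large finite independence sets, then by Proposition~\ref{P-null}(1) there would be an IN-pair $(y_1, y_2)$ with $y_1 \in X_{f,+}$, $y_2 \in X_{f,-}$, and since these sets are disjoint $(y_1, y_2)$ is non-diagonal, contradicting nullness of $\Gamma \curvearrowright X_f$ via Proposition~\ref{P-null}(2).

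The substance is the "if" direction. Assume $\Gamma \curvearrowright X$ is null and $(X_{f,+}, X_{f,-})$ has no arbitrarily large finite independence sets; I want to conclude $\Gamma \curvearrowright X_f$ is null, equivalently (Proposition~\ref{P-null}(2)) that $X_f$ has no non-diagonal IN-pair. Suppose $(y_1, y_2)$ is a non-diagonal IN-pair in $X_f$, and set $x_j = \pi_f(y_j)$. There are two cases. If $x_1 \ne x_2$, then $(x_1, x_2)$ is a non-diagonal IN-pair in $X$ by Proposition~\ref{P-null}(4) (images of IN-pairs are IN-pairs), contradicting nullness of $\Gamma \curvearrowright X$. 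So we must have $x_1 = x_2 =: x$, and since $y_1 \ne y_2$ and $\pi_f$ is injective off $\Gamma x_1$, necessarily $x \in \Gamma x_1$ and $\{y_1, y_2\} = \pi_f^{-1}(x)$ is the whole (two-point) fiber. Because $\tilde f$ separates the two points of every fiber over $\Gamma x_1$ (the two preimages of $sx_1$ are distinguished exactly by the value of $t \mapsto f_Z(tz)$ at $t = s$, hence by $\tilde f$ after applying $s^{-1}$, using $\Gamma$-equivariance), we get $\{\tilde f(y_1), \tilde f(y_2)\} = \{1, -1\}$, say $y_1 \in X_{f,+}$ and $y_2 \in X_{f,-}$. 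Now I would argue that the IN-pair property of $(y_1, y_2)$ forces $(X_{f,+}, X_{f,-})$ to have arbitrarily large finite independence sets: choose product neighborhoods $U_1 \times U_2$ of $(y_1, y_2)$ with $U_1 \subseteq \tilde f^{-1}(1)$ and $U_2 \subseteq \tilde f^{-1}(-1)$ — possible since $\tilde f$ is continuous and $\tilde f(y_1) = 1 \ne -1 = \tilde f(y_2)$ — and observe that any independence set for $(U_1, U_2)$ is automatically an independence set for $(X_{f,+}, X_{f,-})$, since $s^{-1}U_j \subseteq s^{-1}X_{f, \cdot}$ and the intersection condition only gets easier when enlarging the sets. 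Since $(y_1, y_2)$ is an IN-pair, $(U_1, U_2)$ has arbitrarily large finite independence sets, hence so does $(X_{f,+}, X_{f,-})$ — the desired contradiction.

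The main obstacle I anticipate is the step identifying the fiber structure and the role of $\tilde f$: one needs to be careful that $\tilde f$ genuinely separates the two points of each fiber over $\Gamma x_1$ and does not accidentally collapse them, which requires unwinding the definition of the equivalence relation $\sim$ on $Z$ (two points of $Z$ over the same point of $X$ are identified precisely when $f_Z$ agrees on their entire orbits, so the fiber over $sx_1$ has two points exactly because $f$ fails to extend continuously at $x_1$, and those two points differ in the value of $f_Z \circ s$, i.e. in $\tilde f$ after the $\Gamma$-action). Everything else is a routine application of the stated properties of IN-pairs in Proposition~\ref{P-null}, in particular the monotonicity of independence sets under enlarging the sets and the behavior of IN-pairs under factor maps. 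Note this lemma does not by itself give Proposition~\ref{P-construction null}; the remaining Lemma~\ref{L-null} is needed to replace the closed sets $(X_{f,+}, X_{f,-})$ in $X_f$ by the non-closed sets $(X_+, X_-)$ in $X$.
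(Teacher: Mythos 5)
Your ``only if'' direction is fine and is the same as the paper's. The ``if'' direction, however, contains a genuine gap at the step where you claim that $\tilde f$ separates the two points of \emph{every} fiber over $\Gamma x_1$, so that $\{\tilde f(y_1),\tilde f(y_2)\}=\{1,-1\}$. This is false whenever the IN-pair sits over $sx_1$ with $s\neq e_\Gamma$: by construction $\tilde f=f\circ\pi_f$ on $X_f\setminus\pi_f^{-1}(x_1)$, and since the stabilizer of $x_1$ is trivial the fiber $\pi_f^{-1}(sx_1)$ is disjoint from $\pi_f^{-1}(x_1)$, so $\tilde f$ takes the single value $f(sx_1)$ on both points of that fiber. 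Your own parenthetical actually identifies the correct separating function, namely $w\mapsto\tilde f(s^{-1}w)$ (the two points over $sx_1$ are distinguished by the value of $f_Z$ at the translate lying over $x_1$), but you then apply the separation to $(y_1,y_2)$ itself and choose neighborhoods $U_1\subseteq\tilde f^{-1}(1)$, $U_2\subseteq\tilde f^{-1}(-1)$ around $y_1,y_2$; such neighborhoods do not exist in the case $s\neq e_\Gamma$, so the contradiction you aim for is not reached.

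The missing ingredient is exactly part (3) of Proposition~\ref{P-null}: since $\IN_2(X_f,\Gamma)$ is $\Gamma$-invariant, you may replace $(y_1,y_2)$ by $(s^{-1}y_1,s^{-1}y_2)$, which is again an IN-pair and equals the two-point fiber $\pi_f^{-1}(x_1)$. Over $x_1$ the separation claim is correct (for $z,z'\in Z$ lying over $x_1$ one has $f_Z(tz)=f(tx_1)=f_Z(tz')$ for every $t\neq e_\Gamma$, so $z\sim z'$ exactly when $f_Z(z)=f_Z(z')$, and the two classes are distinguished by $\tilde f$). Since $X_{f,+}$ and $X_{f,-}$ are clopen and each contains exactly one of $s^{-1}y_1,s^{-1}y_2$, the IN-pair property of the translated pair gives arbitrarily large finite independence sets for $(X_{f,+},X_{f,-})$, completing the argument; this translation step is precisely how the paper's proof proceeds. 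With that correction inserted, the rest of your argument (monotonicity of independence sets under enlarging the target sets) goes through.
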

\begin{proof}
Assume that $\Gamma\curvearrowright X_f$ is null. Since $X_{f, +}$ and $X_{f, -}$ are disjoint closed subsets of $X_f$, by (1) and (2) of Proposition~\ref{P-null} the pair $(X_{f,+}, X_{f,-})$ does not have arbitrarily large finite independence sets. Also from (2) and (4) of Proposition~\ref{P-null} we know that factors of null actions are null. Thus $\Gamma\curvearrowright X$ is null.
This proves the ``only if'' part.

Assume that $\Gamma\curvearrowright X_f$ is nonnull and $\Gamma\curvearrowright X$ is null. By (2) of Proposition~\ref{P-null}  there is an IN-pair $(z_1, z_2)$ in $X_f^2$ with $z_1\neq z_2$. By (4) of Proposition~\ref{P-null} the pair $(\pi_f(z_1), \pi_f(z_2))$ is an IN-pair in $X^2$.  Since $\Gamma\curvearrowright X$ is null, by (2) of  Proposition~\ref{P-null} the IN-pairs in $X^2$ must be diagonal ones. Thus $\pi_f(z_1)=\pi_f(z_2)$. Then $\pi_f(z_1)\in \Gamma x_1$. Say, $\pi_f(z_1)=sx_1$ for some $s\in \Gamma$. Then $\pi_f(s^{-1}z_1)=\pi_f(s^{-1}z_2)=x_1$, and hence $\{s^{-1}z_1, s^{-1}z_2\}=\pi_f^{-1}(x_1)$. By (3) of Proposition~\ref{P-null} the pair $(s^{-1}z_1, s^{-1}z_2)$ is also an IN-pair. Note that $X_{f, +}$ and $X_{f, -}$ are both closed and open subsets of $X_f$, and each of them contains exactly one of $ s^{-1}z_1$ and $s^{-1}z_2$. Thus the pair $(X_{f,+}, X_{f, -})$ has arbitrarily large  finite independence sets. This proves the ``if'' part.
\end{proof}

\begin{lemma} \label{L-null}
The pair $(X_{f,+}, X_{f, -})$ has arbitrarily large finite independence sets if and only if the pair $(X_+, X_-)$ has arbitrarily large finite independence sets.
\end{lemma}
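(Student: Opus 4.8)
The plan is to prove the two implications separately, with the notational convention that the index set $\{1,2\}$ is identified with $\{+,-\}$, so $X_1=X_+$, $X_2=X_-$, $X_{f,1}=X_{f,+}$, $X_{f,2}=X_{f,-}$; I will freely use the facts from Section~\ref{S-construction}, namely that $\pi_f$ is surjective with $\pi_f^{-1}(y)$ a two-point set for $y\in\Gamma x_1$, that $f\circ\pi_f=\tilde f$ on $X_f\setminus\pi_f^{-1}(x_1)$, and that $X_+\sqcup X_-=X\setminus\{x_1\}$ while $X_{f,+}\sqcup X_{f,-}=X_f$. The ``if'' direction will be a one-line lifting argument; the ``only if'' direction is where essentially all of the content sits, and the reason is the obvious asymmetry: lifting a configuration from $X$ to $X_f$ is immediate, but a witness for a pattern over $X_f$ may project down to a point some translate of which lands on the bad point $x_1$, where $f$ is not even defined.

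For the ``if'' direction I would take a finite independence set $M$ for $(X_+,X_-)$, fix $\omega\in\{1,2\}^M$, choose $x\in\bigcap_{s\in M}s^{-1}X_{\omega(s)}$ and any $\hat x\in\pi_f^{-1}(x)$. Since $sx\in X_{\omega(s)}\subseteq X\setminus\{x_1\}$ for each $s\in M$, we get $s\hat x\notin\pi_f^{-1}(x_1)$, hence $\tilde f(s\hat x)=f(sx)$, which puts $s\hat x\in X_{f,\omega(s)}$; thus $\hat x\in\bigcap_{s\in M}s^{-1}X_{f,\omega(s)}\neq\emptyset$. As $\omega$ was arbitrary and the same reasoning applies verbatim to every finite subset of $M$, the set $M$ is already an independence set for $(X_{f,+},X_{f,-})$, with no loss of elements.

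For the ``only if'' direction, let $M$ be a finite independence set for $(X_{f,+},X_{f,-})$ and set $N=|M|$. Since $X_{f,+}\sqcup X_{f,-}=X_f$, each $\hat x\in X_f$ has a well-defined \emph{itinerary} $\omega_{\hat x}\in\{1,2\}^M$ characterized by $s\hat x\in X_{f,\omega_{\hat x}(s)}$, and $\hat x\in\bigcap_{s\in M}s^{-1}X_{f,\omega(s)}$ precisely when $\omega_{\hat x}=\omega$. The only obstruction to pushing a witness $\hat x$ down is $s\pi_f(\hat x)=x_1$ for some $s\in M$, i.e. $\hat x$ belonging to the finite set $P:=\bigcup_{s\in M}\pi_f^{-1}(s^{-1}x_1)$, which has at most $2N$ points. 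Call $\omega\in\{1,2\}^M$ \emph{spoiled} if $\omega=\omega_{\hat x}$ for some $\hat x\in P$; then there are at most $2N$ spoiled patterns. For any non-spoiled $\omega$, independence of $M$ furnishes $\hat x\in\bigcap_{s\in M}s^{-1}X_{f,\omega(s)}$; this $\hat x$ is not in $P$, so $s\pi_f(\hat x)\ne x_1$ and $f(s\pi_f(\hat x))=\tilde f(s\hat x)$ for every $s\in M$, giving $\pi_f(\hat x)\in\bigcap_{s\in M}s^{-1}X_{\omega(s)}$.

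The last step is a pigeonhole extraction of a large independence set for $(X_+,X_-)$: fix $k$ with $2^k>2N$ and let $M'\subseteq M$ be any subset with $|M\setminus M'|=k$, so $|M'|=N-k$. Each $\omega'\in\{1,2\}^{M'}$ has $2^k>2N$ distinct extensions to $\{1,2\}^M$, so not all of them can be spoiled; choosing a non-spoiled extension $\omega$ yields $\bigcap_{s\in M'}s^{-1}X_{\omega'(s)}\supseteq\bigcap_{s\in M}s^{-1}X_{\omega(s)}\neq\emptyset$, and since any pattern on a subset of $M'$ extends to one on $M'$, this shows $M'$ is an independence set for $(X_+,X_-)$. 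Since $N-k$ can be made arbitrarily large (the loss is only $O(\log N)$), arbitrarily large finite independence sets for $(X_{f,+},X_{f,-})$ produce arbitrarily large finite independence sets for $(X_+,X_-)$. The main obstacle — and the only really substantive point — is exactly this bookkeeping: recognizing that the discontinuity locus contributes only $2|M|$ spoiled patterns, because each of the finitely many points of $X_f$ lying over $\{s^{-1}x_1:s\in M\}$ realizes a single itinerary, and deducing that this costs only logarithmically many elements.
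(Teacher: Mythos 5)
Your proof is correct, but the ``only if'' direction follows a genuinely different route from the paper's. The paper lists $M$ as $s_1,\dots,s_{2m}$ and works with the pairs $(s_{2n-1},s_{2n})$: if some witness sends $s_{2n-1}$ into the bad fiber $\pi_f^{-1}(x_1)$, then---because the stabilizer of $x_1$ is trivial, so $s_{2n}s_{2n-1}^{-1}x_1\neq x_1$---the value of the pattern at the partner coordinate $s_{2n}$ is forced; choosing extensions that take the opposite value at $s_{2n}$ then guarantees the witness avoids the bad fiber at $s_{2n-1}$, and one extracts the independence set $M'=\{s_1,s_3,\dots,s_{2m-1}\}$ of size $|M|/2$. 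You instead count: the set $P=\bigcup_{s\in M}\pi_f^{-1}(s^{-1}x_1)$ has at most $2|M|$ points, each realizing one itinerary, so at most $2|M|$ patterns on $M$ are spoiled, and discarding $k\approx\log_2(2|M|)$ elements lets every pattern on the remaining $M'$ be realized by a non-spoiled extension, whose witness pushes down to $X$. Your argument loses only $O(\log|M|)$ elements instead of half, and---unlike the paper's---never uses that $x_1$ has trivial stabilizer, only that the fibers over $\Gamma x_1$ are finite; the paper's pairing trick buys a cleaner quantitative bound ($m$ out of $2m$) and no pigeonhole bookkeeping, but both are more than enough for the qualitative ``arbitrarily large'' conclusion. (Minor point: your choice of $k$ with $2^k>2N$ and $k\le N$ is only possible for $N$ not too small, which is harmless since only large $N$ matter.)
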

\begin{proof} Since $\pi_f^{-1}(X_+)\subseteq X_{f, +}$ and $\pi_f^{-1}(X_-)\subseteq X_{f, -}$, the ``if'' part is trivial.

Assume that $(X_{f,+}, X_{f, -})$ has an independence set $M$ with cardinality $2m$ for some positive integer $m$. Then for any map $\omega: M\rightarrow \{+, -\}$ one has $\bigcap_{s\in M}s^{-1}X_{f, \omega(s)}\neq \emptyset$. For each such $\omega$, fix a point $z_\omega\in \bigcap_{s\in M}s^{-1}X_{f, \omega(s)}$. Then for each such $\omega$ and $s\in M$, one has $sz_\omega\in X_{f, \omega(s)}\subseteq \pi_f^{-1}(X_{\omega(s)})\cup \pi_f^{-1}(x_1)$. List the elements of $M$ as $s_1, s_2, \dots, s_{2m}$. Denote by $A$ the set of integers $1\le n\le m$ such that $s_{2n-1}z_\omega\in \pi_f^{-1}(x_1)$ for some $\omega$.

Let $n\in A$ and take one $\omega_n$ such that $s_{2n-1}z_{\omega_n}\in \pi_f^{-1}(x_1)$. Then $s_{2n-1}\pi_f(z_{\omega_n})=x_1$. Since the stabilizer group of $x_1$ is trivial, we have $$\pi_f(s_{2n}z_{\omega_n})=s_{2n}\pi_f(z_{\omega_n})=s_{2n}s_{2n-1}^{-1}x_1\neq x_1.$$
If $s_{2n}s_{2n-1}^{-1}x_1\in X_+$, then $s_{2n}z_{\omega_n}\in X_{f, +}$ and hence $\omega_n(s_{2n})=+$. Similarly, if $s_{2n}s_{2n-1}^{-1}x_1\in X_-$, then $s_{2n}z_{\omega_n}\in X_{f, -}$ and hence $\omega_n(s_{2n})=-$. Thus for any map $\omega:M\rightarrow \{+, -\}$, if $\omega(s_{2n})\neq \omega_n(s_{2n})$, then $s_{2n-1}z_\omega\in \pi_f^{-1}(X_{\omega(s_{2n-1})})$ and hence $s_{2n-1}\pi_f(z_\omega)\in X_{\omega(s_{2n-1})}$.

Set $M'=\{s_{2n-1}: n=1, \dots, m\}$. For each map $\omega': M'\rightarrow \{+, -\}$, extend it to a map $\omega: M\rightarrow \{+, -\}$ such that $\omega(s_{2n})\neq \omega_n(s_{2n})$ for all $n\in A$. By the above we have $s_{2n-1}\pi_f(z_\omega)\in X_{\omega(s_{2n-1})}=X_{\omega'(s_{2n-1})}$ for all $n=1, \dots, m$. Therefore $\bigcap_{s\in M'}s^{-1}X_{\omega'(s)}\neq \emptyset$, i.e. $M'$ is an independence set for $(X_+, X_-)$. This proves the ``only if'' part.
\end{proof}

\begin{remark} \label{R-tame}
The analogues of Proposition~\ref{P-construction null} and Lemmas~\ref{L-null1} and \ref{L-null} for tameness all hold with similar proofs.
\end{remark}

We come back to the construction of McMahon. Denote by $\sM^*(X)$ the set of {\it nonatomic} $\mu$ in $\sM(X)$, i.e. $\mu(\{x\})=0$ for every $x\in X$. Also denote by $\pi_{f*}$ the continuous surjective map $\sM(X_f)\rightarrow \sM(X)$ induced by $\pi_f$.

\begin{lemma} \label{L-nonatomic}
For each $\mu\in \sM^*(X)$, the set $(\pi_{f*})^{-1}(\mu)$ consists of a single point, which we denote by $\mu_f$. The map $\sM^*(X)\rightarrow \sM(X_f)$ sending $\mu$ to $\mu_f$ is continuous.
\end{lemma}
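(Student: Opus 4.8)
The plan is to exploit that $\pi_f$ is injective off the countable set $\Gamma x_1$, so that a nonatomic measure on $X$ simply ``does not see'' the locus where $\pi_f$ is two-to-one. Put $E=\pi_f^{-1}(\Gamma x_1)\subseteq X_f$. Since $\Gamma$ is countable and each fiber of $\pi_f$ over $\Gamma x_1$ has two points, $E$ is countable, and $\pi_f$ restricts to a continuous bijection $\pi_f\colon X_f\setminus E\to X\setminus\Gamma x_1$. Both $X_f\setminus E$ and $X\setminus\Gamma x_1$ are $G_\delta$ (hence Polish) subsets of the compact metrizable spaces $X_f$ and $X$, so by the Lusin--Souslin theorem this continuous bijection is a Borel isomorphism; let $\sigma\colon X\setminus\Gamma x_1\to X_f\setminus E$ denote its (Borel) inverse.

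For uniqueness, let $\mu\in\sM^*(X)$ and suppose $\nu\in\sM(X_f)$ satisfies $\pi_{f*}\nu=\mu$. Since $\Gamma x_1$ is countable and $\mu$ is nonatomic, $\nu(E)=\mu(\Gamma x_1)=0$, and therefore for every $g\in C(X_f)$ one has $\int_{X_f}g\,d\nu=\int_{X_f\setminus E}g\,d\nu=\int_{X\setminus\Gamma x_1}(g\circ\sigma)\,d\mu$. The right-hand side depends only on $\mu$, so $(\pi_{f*})^{-1}(\mu)$ contains at most one point; since $\pi_{f*}$ is surjective it contains exactly one, which we call $\mu_f$ (concretely, $\mu_f=\sigma_*\mu$, and this also shows $\mu_f$ is itself nonatomic).

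Continuity will then follow formally from uniqueness plus compactness, with no further computation. Suppose $\mu_n\to\mu$ in $\sM^*(X)$ but $(\mu_n)_f\not\to\mu_f$ in $\sM(X_f)$. By compactness of $\sM(X_f)$, after passing to a subsequence we may assume $(\mu_{n_k})_f\to\nu$ for some $\nu\in\sM(X_f)$ with $\nu\neq\mu_f$. Since pushforward is weak${}^*$-continuous, $\pi_{f*}\nu=\lim_k\pi_{f*}(\mu_{n_k})_f=\lim_k\mu_{n_k}=\mu$, so $\nu\in(\pi_{f*})^{-1}(\mu)=\{\mu_f\}$ by the previous paragraph, a contradiction. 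As $\sM(X_f)$ is metrizable, this sequential continuity suffices.

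The only non-formal input is the Borel measurability of $\sigma$, i.e.\ that a continuous bijection between co-countable (hence $G_\delta$) subsets of the compact metrizable spaces $X_f$ and $X$ is a Borel isomorphism; this is a standard descriptive-set-theoretic fact, and I expect the rest to be routine bookkeeping with pushforwards.
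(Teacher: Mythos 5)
Your proposal is correct, and its overall strategy coincides with the paper's: nonatomicity of $\mu$ forces $\nu(\pi_f^{-1}(\Gamma x_1))=0$, then $\nu$ is recovered from $\mu$ by transporting it through the inverse of $\pi_f$ off this countable set, and continuity follows formally from uniqueness together with compactness (and metrizability) of $\sM(X_f)$, exactly as in the paper's net argument. The only real divergence is how you justify measurability of the inverse. You invoke the Lusin--Souslin theorem for the continuous bijection between the co-countable (hence $G_\delta$, since countable sets are $F_\sigma$) subsets, obtaining a Borel isomorphism $\sigma$. The paper instead proves the stronger and more elementary fact that the restriction $\psi\colon \pi_f^{-1}(X\setminus\Gamma x_1)\to X\setminus\Gamma x_1$ is a \emph{homeomorphism}: if $x_j\to x_\infty$ in $X\setminus\Gamma x_1$, any limit point $z$ of $\psi^{-1}(x_j)$ in the compact space $X_f$ satisfies $\pi_f(z)=x_\infty$, and since the fiber over $x_\infty\notin\Gamma x_1$ is a singleton, $z=\psi^{-1}(x_\infty)$. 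So your route is valid but imports a heavier descriptive-set-theoretic tool where a two-line compactness argument (exploiting that the off-orbit fibers are singletons) suffices and keeps the proof self-contained; on the other hand, your argument is slightly more robust, as it would go through even in situations where one only knows the restricted map to be an injective Borel map rather than one with compact ambient structure. Your bookkeeping with pushforwards (the change of variables $\int_{X_f}g\,d\nu=\int_{X\setminus\Gamma x_1}(g\circ\sigma)\,d\mu$, surjectivity of $\pi_{f*}$ for existence, and $\mu_f=\sigma_*\mu$) is accurate.
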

\begin{proof} Let $\nu\in \sM(X_f)$ with $\pi_{f*}(\nu)=\mu$. Since $\mu$ is nonatomic, we have
$$\nu(\pi_f^{-1}(\Gamma x_1))=\mu(\Gamma x_1)=0.$$

Restrict $\pi_f$ to $\pi_f^{-1}(X\setminus \Gamma x_1)$ we get a map $\psi:\pi_f^{-1}(X\setminus \Gamma x_1)\rightarrow X\setminus \Gamma x_1$.
We claim that $\psi$ is a  homeomorphism. Since $\psi$ is continuous and bijective, it suffices to show that $\psi^{-1}$ is continuous. Let $(x_j)_{j\in J}$ be a net in
$X\setminus \Gamma x_1$ converging to some $x_\infty\in X\setminus \Gamma x_1$. We just need to show that $\psi^{-1}(x_j)\to \psi^{-1}(x_\infty)$ as $j\to \infty$. Since $X_f$ is compact, passing to a subnet if necessary, we may assume that $\psi^{-1}(x_j)$ converges to some $z\in X_f$ as $j\to \infty$. Then $x_j=\pi_f(\psi^{-1}(x_j))$ converges to $\pi_f(z)$, and hence  $\pi_f(z)=x_\infty$. Thus $z=\psi^{-1}(x_\infty)$, and we conclude that $\psi^{-1}(x_j)\to \psi^{-1}(x_\infty)$  as desired. This proves our claim.

Since $\psi$ is a homeomorphism, it is a Borel isomorphism. Then for any Borel set $A\subseteq X_f$, we have
$$\nu(A)=\nu(A\setminus \pi_f^{-1}(\Gamma x_1))=\mu(\psi(A\setminus \pi_f^{-1}(\Gamma x_1)))=\mu(\pi_f(A)\setminus \Gamma x_1).$$
Therefore $\nu$ is unique.

Denote by $\varphi$ the map $\sM^*(X)\rightarrow \sM(X_f)$ sending $\mu$ to $\mu_f$. We shall show that $\varphi$ is continuous. Let $\{\mu_j\}_{j\in J}$ be a net in $\sM^*(X)$ converging to some $\mu_\infty\in \sM^*(X)$. We just need to show that $\varphi(\mu_j)\to \varphi(\mu_\infty)$ as $j\to \infty$. Since $\sM(X_f)$ is compact, passing to a subnet if necessary, we may assume that $\varphi(\mu_j)$ converges to some $\nu\in \sM(X_f)$ as $j\to \infty$. Then $\mu_j=\pi_{f*}(\varphi(\mu_j))$ converges to $\pi_{f*}(\nu)$, and hence
$\pi_{f*}(\nu)=\mu_\infty$. Thus $\nu=\varphi(\mu_\infty)$, and we conclude that $\varphi(\mu_j)\to \varphi(\mu_\infty)$ as desired. Therefore $\varphi$ is continuous.
\end{proof}

Now assume that $\Gamma\curvearrowright Y$  and $\Gamma\curvearrowright Z$ are continuous actions on compact metrizable spaces
such that $X=Y\times Z$ and the action $\Gamma\curvearrowright X$ is the product action $\Gamma\curvearrowright Y\times Z$. Also assume that there is some $\Gamma$-invariant nonatomic $\mu_Z\in \sM(Z)$. Denote by $\pi_Y$ the projection $X\rightarrow Y$.
For each $y\in Y$, we have the nonatomic measure $\delta_y\times \mu_Z\in \sM(X)$, where $\delta_y$ denotes the point mass at $y$.
Then $y\mapsto \delta_y\times \mu_Z$ is a RIM for the extension $\pi_Y$.
Thus by Lemma~\ref{L-nonatomic} the map $y\mapsto (\delta_y\times \mu_Z)_f$ is a RIM for the extension $\pi_Y\circ \pi_f$.

In \cite{McMahon76, McMahon78} McMahon took $\Gamma=G\times \Zb$ for $G$ being any dense countable subgroup of the $p$-adic integer group $\Zb_p$, and $Y=Z=\Zb_p$. His actions $\Gamma\curvearrowright Y$ and $\Gamma\curvearrowright Z$ are the ones factoring through the shift actions $G\curvearrowright Y$ and $\Zb\curvearrowright Z$ via treating $\Zb$ as a dense subgroup of $\Zb_p$ naturally. His measure $\mu_Z$ is the normalized Haar measure of $Z$. Taking suitable choices of $f$, he showed that the extension $\pi_Y\circ \pi_f$ could be either open or non-open.

\section{Residually finite groups} \label{S-RF}

As a warm up, in this section we apply the construction in Section~\ref{S-construction} to residually finite groups, and show that sometimes it yields null actions. Though the results of this section will not be used for the proof of Theorem~\ref{T-main}, the method in this section will be used in Section~\ref{S-free} in a much more complicated way.

Let $\Gamma$ be a countably infinite residually finite group with identity element $e_\Gamma$. This means that there is  a strictly decreasing sequence $\{\Gamma_n\}$ of finite-index normal subgroups of $\Gamma$ such that $\bigcap_{n\in \Nb} \Gamma_n=\{e_\Gamma\}$.

Denote by $X$ the inverse limit $\varprojlim_{n\to \infty}\Gamma/\Gamma_n$, which is the subset of $\prod_{n\in \Nb}\Gamma/\Gamma_n$ consisting of $(x_n)_{n\in \Nb}$ satisfying $\pi_{n, n+1}(x_{n+1})=x_n$ for all $n\in \Nb$. Here $\pi_{n, n+1}$ denotes the natural homomorphism $\Gamma/\Gamma_{n+1}\rightarrow \Gamma/\Gamma_n$.
This is a  compact metrizable totally disconnected group. Denote by $\pi_n$ the quotient map $X\rightarrow \Gamma/\Gamma_n$. The group $\Gamma$ is naturally a subgroup of $X$, and hence has a natural left shift action on $X$. Clearly the action $\Gamma\curvearrowright X$ is minimal and free in the sense that every point of $X$ has trivial stabilizer group. Note that $X$ is a Cantor set, so it contains more than one orbit.

\begin{lemma} \label{L-equicontinuous null}
The action $\Gamma\curvearrowright X$ is null.
\end{lemma}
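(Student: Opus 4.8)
The plan is to prove nullness directly from the definition of sequence topological entropy, using that each quotient $\Gamma/\Gamma_n$ is \emph{finite} and that the $\Gamma$-action on it factors through the finite group $\Gamma/\Gamma_n$. For each $n$ put $\cP_n=\{\pi_n^{-1}(a):a\in\Gamma/\Gamma_n\}$, a finite clopen partition of $X$. First I would record the standard observation that, since the $\Gamma_n$ form a decreasing sequence with trivial intersection and $X=\varprojlim\Gamma/\Gamma_n$ carries the subspace topology from $\prod_{n}\Gamma/\Gamma_n$, the partitions $\cP_n$ separate points; hence, by compactness, for any compatible metric on $X$ one has $\mesh(\cP_n)\to 0$, and consequently every finite open cover $\cU$ of $X$ is refined by $\cP_m$ for all sufficiently large $m$ (a Lebesgue-number argument).

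Next, fix a sequence $\mathfrak{s}=\{s_n\}_{n\in\Nb}$ in $\Gamma$ and a finite open cover $\cU$ of $X$, and choose $m$ with $\cP_m$ refining $\cU$. Because $\Gamma_m$ is normal of finite index, the $\Gamma$-action on $\Gamma/\Gamma_m$ factors through $\Gamma/\Gamma_m$; concretely $s^{-1}\pi_m^{-1}(a)=\pi_m^{-1}(s^{-1}a)$ for all $s\in\Gamma$ and $a\in\Gamma/\Gamma_m$, so $s^{-1}\cP_m=\cP_m$ as an unordered partition. Therefore $\cP_m$ refines $s_i^{-1}\cU$ for every $i$, hence refines $\bigvee_{i=1}^{n}s_i^{-1}\cU$, which gives $N(\bigvee_{i=1}^{n}s_i^{-1}\cU)\le|\cP_m|=[\Gamma:\Gamma_m]$ for every $n$. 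Thus $\htopol(X,\cU;\mathfrak{s})=\limsup_{n}\frac1n\log[\Gamma:\Gamma_m]=0$, and since $\mathfrak{s}$ and $\cU$ were arbitrary, $\Gamma\curvearrowright X$ is null.

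There is essentially no obstacle here — this is a warm-up — and the only points requiring (minor) care are the Lebesgue-number argument in the first step and the equivariance identity $s^{-1}\pi_m^{-1}(a)=\pi_m^{-1}(s^{-1}a)$ in the second. Alternatively one could argue via Proposition~\ref{P-null}(2): if $(x_1,x_2)$ were a non-diagonal IN-pair, pick $n$ with $\pi_n(x_1)\neq\pi_n(x_2)$ and set $U_j=\pi_n^{-1}(\pi_n(x_j))$; whenever $s,t$ lie in the same coset of $\Gamma_n$ one has $s^{-1}U_1=t^{-1}U_1$ while $s^{-1}U_1\cap t^{-1}U_2=s^{-1}(U_1\cap U_2)=\emptyset$, so any independence set for $(U_1,U_2)$ injects into $\Gamma/\Gamma_n$ and is therefore of size at most $[\Gamma:\Gamma_n]$, contradicting the IN-pair property. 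In either form, the underlying content is simply that $\Gamma\curvearrowright X$ is equicontinuous and equicontinuous actions have vanishing sequence entropy.
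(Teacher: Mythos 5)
Your proof is correct, but it takes a different (and more self-contained) route than the paper. The paper's proof is a one-liner: the compact group $X$ carries a translation-invariant metric, which is invariant under the $\Gamma$-action since $\Gamma$ acts by left translations, and then nullness follows from Proposition~\ref{P-null}(2) via the (implicitly invoked) standard fact that an isometric action admits no non-diagonal IN-pairs. Your main argument instead works straight from the definition of sequence entropy: the clopen partitions $\cP_m=\{\pi_m^{-1}(a)\}$ eventually refine any finite open cover, and $\Gamma$-equivariance of $\pi_m$ gives $s^{-1}\cP_m=\cP_m$ as an unordered partition, so $N(\bigvee_{i=1}^n s_i^{-1}\cU)\le[\Gamma:\Gamma_m]$ uniformly in $n$ and $\htopol(X,\cU;\mathfrak{s})=0$. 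This buys an explicit uniform bound on cover numbers and avoids Proposition~\ref{P-null} altogether, at the cost of the (routine) mesh/Lebesgue-number step, which you correctly flag. Your alternative argument via IN-pairs is essentially the paper's idea made combinatorial: instead of an invariant metric you use that elements of $\Gamma$ lying in the same coset of the normal subgroup $\Gamma_m$ translate the sets $U_j=\pi_m^{-1}(\pi_m(x_j))$ identically (this is where normality of $\Gamma_m$ is genuinely used), so independence sets for $(U_1,U_2)$ have size at most $[\Gamma:\Gamma_m]$; this spells out the step the paper leaves implicit. Either version is a valid replacement for the paper's proof.
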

\begin{proof} The compact metrizable group $X$ has a translation-invariant metric, which is invariant under the $\Gamma$-action. It follows from (2) of Proposition~\ref{P-null} that $\Gamma\curvearrowright X$ is null.
\end{proof}

For each $n\ge 2$ take $\gamma_n\in \Gamma_{n-1}\setminus \Gamma_n$.  Set $C_n=\pi_n^{-1}(\gamma_n\Gamma_n)$, which is a closed and open subset of $X$. The sets $C_n$ for $n\ge 2$ are pairwise disjoint, $e_\Gamma\not\in C_n$, and $C_n\to \{e_\Gamma\}$ as $n\to \infty$ in the sense that for every neighborhood $U$ of $e_\Gamma$ in $X$ one has $C_n\subseteq U$ for all large enough $n$.
Set $X_+=\bigcup_{n\ge 2}C_n$ and $X_-=X\setminus (X_+\cup\{e_\Gamma\})$.

\begin{lemma} \label{L-RF3}
Each independence set $M\subseteq \Gamma$ for $(X_+, X_-)$ has cardinality at most $5$.
\end{lemma}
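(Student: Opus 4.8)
The plan is to translate independence for $(X_+,X_-)$ into a combinatorial statement about the profinite tree $X=\varprojlim_n\Gamma/\Gamma_n$ and then extract a contradiction from a few well-chosen $\{+,-\}$-labelings.

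\emph{Step 1: a membership criterion.} Write $B_n=\pi_n^{-1}(e_\Gamma\Gamma_n)$, so that $C_n=\gamma_nB_n$, $C_n\subseteq B_{n-1}$, $C_n\cap B_n=\emptyset$, and $C_n\to\{e_\Gamma\}$. For $s\in\Gamma$ and $x\in X$ with $x\neq s^{-1}$ set $\ell(s,x)=\max\{j\ge 0:\pi_j(sx)=e_\Gamma\}=\max\{j:\pi_j(x)=\pi_j(s^{-1})\}$, which is finite. Since $sx\in C_n$ forces $\pi_{n-1}(sx)=e_\Gamma\neq\pi_n(sx)$, the only possible value of $n$ is $\ell(s,x)+1$; hence $sx\in X_+$ iff $\ell(s,x)\ge 1$ and $\pi_{\ell(s,x)+1}(sx)=\gamma_{\ell(s,x)+1}\Gamma_{\ell(s,x)+1}$, and otherwise $sx\in X_-$. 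In tree language, $\ell(s,x)$ is the depth at which the path $x$ leaves the path $s^{-1}$, and $s^{-1}X_+$ is a fixed ``comb'' along $s^{-1}$, the tooth at depth $\ell$ being the branch $\pi_{\ell+1}(s^{-1})\cdot\gamma_{\ell+1}\Gamma_{\ell+1}$.

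\emph{Step 2: the everywhere-$+$ labeling.} Since a subset of an independence set is an independence set, it suffices to contradict $|M|=6$; write $M=\{s_1,\dots,s_6\}$. As the finite set $\{s_i^{-1}\}$ cannot affect nonemptiness of the (clopen-built) atoms, every $\omega\colon M\to\{+,-\}$ is realized by some $x\in X$. Realizing $\omega\equiv+$ gives $x$ with $s_ix\in C_{n_i}$, $n_i\ge 2$. Using $\pi_m(s_is_j^{-1})=\pi_m(s_ix)\,\pi_m(s_jx)^{-1}$ together with $C_n\subseteq B_{n-1}$ and $C_n\cap B_n=\emptyset$, one obtains, with $d(s_i,s_j):=\max\{m:\pi_m(s_i)=\pi_m(s_j)\}$: if $n_i\neq n_j$ then $d(s_i,s_j)=\min(n_i,n_j)-1$, and moreover $\gamma_{n_i}\Gamma_{n_i}=\pi_{n_i}(s_is_j^{-1})$ when $n_i<n_j$; while if $n_i=n_j$ then $d(s_i,s_j)\ge n_i$. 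In particular all $\pi_1(s_i)$ coincide, and reading off the levels $n_i-1$ one sees that the paths $s_1^{-1},\dots,s_6^{-1}$ form a ``caterpillar'': a common trunk along which they peel off in blocks, block $t$ at depth $v_t$, in the direction forced relative to $\gamma_{v_t+1}\Gamma_{v_t+1}$.

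\emph{Step 3: bounding the caterpillar.} Now feed in the labelings that are $+$ on all but one element (and, if needed, on all but two) and rerun Step 2 on the corresponding proper subsets of $M$. Since the depths $d(s_i,s_j)$ were pinned down in Step 2, each new witness has its ``$n$-data'' essentially forced, and comparing witnesses makes the caterpillar rigid: at the first branch the trunk must split into \emph{exactly two} blocks (otherwise $\omega\equiv+$ cannot be realized), the two blocks must be $\gamma$-related in the sense of Step 1, and past that branch the directions available at each node --- governed by $|\Gamma_j/\Gamma_{j+1}|$ and by the single fixed tooth-direction $\gamma_{j+1}\Gamma_{j+1}$ at each level --- are too scarce for more than one $s_i^{-1}$ to survive on a side. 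Recording which sign patterns are then left unrealized caps the size of each side of the first split, and a short case analysis yields $|M|\le 5$. I expect this last bookkeeping to be the main obstacle: it is elementary but fiddly, with irritating sub-cases according to whether a tooth-direction has order $2$ in its quotient, and one must remember that the exceptional points $s_i^{-1}$ are unavailable as witnesses (harmless, as a nonempty atom here is always large).
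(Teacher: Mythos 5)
Your Steps 1--2 are fine and are essentially the same coset computations the paper runs (your dichotomy ``$n_i\neq n_j\Rightarrow s_is_j^{-1}\in\gamma_{\min}\Gamma_{\min}$, $n_i=n_j\Rightarrow s_is_j^{-1}\in\Gamma_{n_i}$'' is exactly what drives Lemmas~\ref{L-RF1} and \ref{L-RF2}). The genuine gap is Step 3: the entire content of the lemma is the contradiction at $|M|=6$, and you never derive it -- you defer it as ``elementary but fiddly bookkeeping.'' Moreover the structural claims you do assert there are either misstated or unsupported. The correct consequence of independence is not that ``the trunk must split into exactly two blocks, otherwise $\omega\equiv+$ cannot be realized'' (realizability of $\omega\equiv+$ is the hypothesis, not the constraint); it is that for \emph{every} labeling $\omega$ the level function $s\mapsto g(\omega,s)$ on $\omega^{-1}(+)$ is constant off at most one element, and this follows from the realizability of patterns with \emph{two} minus signs (if one element had level strictly below two others at the witness $x_\omega$, the pattern $(-,-,+)$ on that triple would be unrealizable -- this is Lemma~\ref{L-RF2}). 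Likewise ``the directions available at each node are too scarce for more than one $s_i^{-1}$ to survive'' has no basis: there is no bound on $[\Gamma_j:\Gamma_{j+1}]$, which can be arbitrarily large, so scarcity of branches is not the obstruction; the obstruction comes from the single distinguished coset $\gamma_n\Gamma_n$ per level interacting with coset identities forced by \emph{different} labelings.

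Concretely, what is missing is the argument the paper supplies after the ``constant off one element'' reduction: with $|M|=6$, choose $\omega$ with minus signs on two elements $s_1,s_2$, so that $\omega^{-1}(+)\setminus B_\omega$ has at least three elements all at a common level $n_1$; pick $s_3$ there, and then the labeling $\tilde\omega$ with a single minus at $s_3$, whose plus-set (again constant at some level $n_2$ off one element) is large enough to contain one of $s_1,s_2$ and another element $s_4$ of $\omega^{-1}(+)\setminus(B_\omega\cup\{s_3\})$. This forces $s_3\Gamma_{n_1}=s_4\Gamma_{n_1}\neq s_1\Gamma_{n_1}$ and $s_1\Gamma_{n_2}=s_4\Gamma_{n_2}\neq s_3\Gamma_{n_2}$, which is impossible because $\Gamma_{n_1},\Gamma_{n_2}$ are nested. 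This counting is exactly why the threshold is $6$; nothing in your sketch explains why $6$ (rather than $4$ or $10$) is where the contradiction appears, which is the sign that the decisive step has not been found. Until you actually carry out this comparison of witnesses (or an equivalent case analysis), the proposal is a plan, not a proof.
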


 We leave the proof of Lemma~\ref{L-RF3} to the end of this section. Consider the function $f: X\setminus \{e_\Gamma\}\rightarrow \{1, -1\}$ defined by $f(x)=1$ if $x\in X_+$ and $f(x)=-1$ if $x\in X_-$. Since $C_n\to \{e_\Gamma\}$ as $n\to \infty$, the function $f$ is continuous. Assume further that $[\Gamma_n:\Gamma_{n+1}]>2$ for all $n$. Then every neighborhood of $e_\Gamma$ in $X$ intersects with both $X_+$ and $X_-$. Thus $f$ cannot be extended to $X$ continuously.
 Then we can apply McMahon's construction in Section~\ref{S-construction} to obtain the minimal action $\Gamma\curvearrowright X_f$ and the fact map $\pi_f: X_f\rightarrow X$.
 From Proposition~\ref{P-construction null} we conclude

\begin{theorem} \label{T-RF}
The action $\Gamma\curvearrowright X_f$ is null.
\end{theorem}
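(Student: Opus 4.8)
The plan is to read off the conclusion from Proposition~\ref{P-construction null}, applied to the action $\Gamma\curvearrowright X$ and the function $f$ constructed in this section. By the discussion preceding the theorem, $f$ is a legitimate input for McMahon's construction of Section~\ref{S-construction}: $\Gamma\curvearrowright X$ is minimal, $X$ (a Cantor set on which $\Gamma$ acts freely) consists of more than one orbit, the stabilizer of $e_\Gamma$ is trivial, and $f\colon X\setminus\{e_\Gamma\}\to\{1,-1\}$ is continuous but, using the standing assumption $[\Gamma_n:\Gamma_{n+1}]>2$, does not extend continuously to $X$. Hence Proposition~\ref{P-construction null} applies and reduces the theorem to two assertions: that $\Gamma\curvearrowright X$ is null, and that the pair $(X_+,X_-)=(f^{-1}(1),f^{-1}(-1))$ does not have arbitrarily large finite independence sets.

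The first assertion is Lemma~\ref{L-equicontinuous null}: $X$ is a compact metrizable group, so it carries a compatible translation-invariant metric, which is in particular $\Gamma$-invariant, and therefore $\Gamma\curvearrowright X$ has no non-diagonal IN-pair and is null by part~(2) of Proposition~\ref{P-null}. The second assertion follows immediately from Lemma~\ref{L-RF3}: if every independence set for $(X_+,X_-)$ has cardinality at most $5$, then there is no independence set of cardinality $6$, so the independence sets are certainly not arbitrarily large. Feeding both assertions into Proposition~\ref{P-construction null} yields that $\Gamma\curvearrowright X_f$ is null, which is the theorem.

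Thus the theorem is a short assembly of already available facts, and the real content — the step I expect to be the main obstacle — is Lemma~\ref{L-RF3}, whose proof is postponed in the paper. I would prove it by a direct combinatorial analysis of translates of the clopen sets $C_n=\pi_n^{-1}(\gamma_n\Gamma_n)$. Given an independence set $M$, for each $\omega\in\{+,-\}^M$ fix a witness point $x_\omega$ with $sx_\omega\in X_{\omega(s)}$ for all $s\in M$; since $X_+=\bigsqcup_{n\ge 2}C_n$, the ``$+$'' coordinates of $\omega$ single out, for each $s$ with $\omega(s)=+$, an index $n_s(\omega)$ with $sx_\omega\in C_{n_s(\omega)}$. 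Using that the $\pi_n$ are $\Gamma$-equivariant, that the $\Gamma_n$ are normal, and that $\gamma_n\in\Gamma_{n-1}\setminus\Gamma_n$, one checks that a nonempty intersection $s^{-1}C_n\cap t^{-1}C_m$ forces $st^{-1}$ (or $ts^{-1}$, when $n>m$) into a single prescribed coset of $\Gamma_{\min(n,m)}$ — a very rigid constraint, because $\gamma_n\notin\Gamma_n$ and the $\gamma_n$ come from ever deeper subgroups. Running this rigidity against all $2^{|M|}$ witnesses, with a pigeonhole step on the indices $n_s(\omega)$, should pin $|M|$ below an absolute bound, and chasing the constants gives $5$. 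I would also keep in mind the caveat noted after Proposition~\ref{P-construction null}: because $X_+$ and $X_-$ are not closed, this bound is not a formal consequence of the nullness of $\Gamma\curvearrowright X$, and the argument genuinely exploits that the $C_n$ shrink to $e_\Gamma$ inside the profinite group $X$.
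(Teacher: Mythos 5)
Your proof of the theorem is correct and is essentially the paper's own: one applies Proposition~\ref{P-construction null} to the $f$ constructed in this section, feeding in Lemma~\ref{L-equicontinuous null} for the nullness of $\Gamma\curvearrowright X$ and Lemma~\ref{L-RF3} for the bound on independence sets of $(X_+,X_-)$. Your concluding sketch of Lemma~\ref{L-RF3} is only heuristic (the paper proves it via the coset-rigidity Lemmas~\ref{L-RF1} and \ref{L-RF2}), but since that lemma is stated separately before the theorem, citing it here is legitimate and your assembly of the pieces is exactly the intended argument.
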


\begin{remark} \label{R-integer}
Fix distinct prime numbers $p, q\ge 3$. For $\Gamma=\Zb$, we can take $\Gamma_n=p^nq^n\Zb$. Then $X$ is the product of the $p$-adic integer group $\Zb_p$ and the $q$-adic integer group $\Zb_q$. Denote by $\pi_p$ the projection $X\rightarrow \Zb_p$. As the normalized Haar measure of $\Zb_q$ is nonatomic, from the discussion at the end of Section~\ref{S-construction} we know that the extension $\pi_p\circ \pi_f$ has a RIM. The proofs of Lemmas~\ref{L-RIM} and \ref{L-open} in the next section also work in this case to show that $\pi_p\circ \pi_f$ has a unique RIM and is not open.
\end{remark}

To prove Lemma~\ref{L-RF3}, we need to make some preparation.

\begin{lemma} \label{L-RF1}
Let $s_1, s_2\in \Gamma$ and $x, y\in X$ such that $s_1x\in C_{n_1}$ and $s_2x\in C_{n_2}$ with $n_1<n_2$, and $s_1y\not \in C_{n_1}$ and $s_2y\in C_{m_2}$. Then $m_2\le n_1$.
\end{lemma}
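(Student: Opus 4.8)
The plan is to exploit that each $C_n$ sits deep in the filtration: since $\gamma_n\in\Gamma_{n-1}$, a point lying in $C_n$ has a very constrained image under $\pi_m$ for every $m<n$. First I would record the following elementary observation: \emph{if $m<n$ and $z\in C_n$, then $\pi_m(z)=\Gamma_m$, the identity coset.} Indeed $z\in C_n=\pi_n^{-1}(\gamma_n\Gamma_n)$, so $\pi_m(z)$ is the image of $\gamma_n\Gamma_n$ under the transition homomorphism $\Gamma/\Gamma_n\to\Gamma/\Gamma_m$; since $\gamma_n\in\Gamma_{n-1}\subseteq\Gamma_m$, this image is $\Gamma_m$.

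Next I would compute $\pi_{n_1}(s_2s_1^{-1})$ from the information about $x$. Recall that $X$ is a group, $\Gamma\subseteq X$, each $\pi_k$ is a group homomorphism, and the action is left translation; thus $s_2s_1^{-1}=(s_2x)(s_1x)^{-1}$. From $s_1x\in C_{n_1}$ we get $\pi_{n_1}(s_1x)=\gamma_{n_1}\Gamma_{n_1}$, and from $s_2x\in C_{n_2}$ with $n_2>n_1$, together with the observation above, we get $\pi_{n_1}(s_2x)=\Gamma_{n_1}$. Hence $\pi_{n_1}(s_2s_1^{-1})=\pi_{n_1}(s_2x)\,\pi_{n_1}(s_1x)^{-1}=\gamma_{n_1}^{-1}\Gamma_{n_1}$.

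Finally I would argue by contradiction. Suppose $m_2>n_1$. Then $s_2y\in C_{m_2}$ with $m_2>n_1$, and the observation gives $\pi_{n_1}(s_2y)=\Gamma_{n_1}$. Using $s_1y=(s_1s_2^{-1})(s_2y)$ and $s_1s_2^{-1}=(s_2s_1^{-1})^{-1}$, we obtain $\pi_{n_1}(s_1y)=\pi_{n_1}(s_2s_1^{-1})^{-1}=\gamma_{n_1}\Gamma_{n_1}$, so $s_1y\in\pi_{n_1}^{-1}(\gamma_{n_1}\Gamma_{n_1})=C_{n_1}$, contradicting the hypothesis $s_1y\notin C_{n_1}$. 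Therefore $m_2\le n_1$. I do not anticipate a genuine obstacle: the only points needing care are the index bookkeeping (using $n_2-1\ge n_1$ and $m_2-1\ge n_1$ so the observation applies) and tracking the group structure on $X$, so that the single element $s_2s_1^{-1}$ simultaneously governs the passage $s_1x\mapsto s_2x$ and $s_1y\mapsto s_2y$.
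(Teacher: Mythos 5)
Your proof is correct and is essentially the paper's own argument: both reduce everything to the single element $s_1s_2^{-1}$ (your $\pi_{n_1}(s_2s_1^{-1})=\gamma_{n_1}^{-1}\Gamma_{n_1}$ is exactly the paper's $t=s_1s_2^{-1}\in\gamma_{n_1}\Gamma_{n_1}$), using that $\gamma_n\in\Gamma_{n-1}$ forces points of $C_n$ to project to the identity coset at any level below $n$, and then derive the contradiction $s_1y\in C_{n_1}$ when $m_2>n_1$. No gaps.
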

\begin{proof} Set $t=s_1s_2^{-1}$. Then $tC_{n_2}\cap C_{n_1}\neq \emptyset$, and  hence
$$t\Gamma_{n_1}=t\gamma_{n_2}\Gamma_{n_1}=\gamma_{n_1}\Gamma_{n_1}.$$
That is, $t\in \gamma_{n_1}\Gamma_{n_1}$. If $m_2>n_1$, then
$$\pi_{n_1}(s_1y)=\pi_{n_1}(ts_2y)=t\pi_{n_1}(s_2y)=t\Gamma_{n_1}=\gamma_{n_1}\Gamma_{n_1},$$
and hence $s_1y\in C_{n_1}$, which is a contradiction. Therefore $m_2\le n_1$.
\end{proof}

\begin{lemma} \label{L-RF2}
Let $s_1, s_2, s_3\in \Gamma$ and $x\in X$ such that $s_1x\in C_{n_1}, s_2x\in C_{n_2}$ and $s_3x\in C_{n_3}$ with $n_1<\min(n_2, n_3)$. Then there is no $y\in X$ satisfying $s_1y, s_2y\not\in X_+$ and $s_3y\in X_+$.
\end{lemma}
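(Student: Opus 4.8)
The plan is to argue by contradiction: suppose there is $y\in X$ with $s_1y,s_2y\notin X_+$ and $s_3y\in X_+$. Since $X_+=\bigcup_{n\ge 2}C_n$ and the sets $C_n$ are pairwise disjoint, there is a unique integer $m_3\ge 2$ with $s_3y\in C_{m_3}$. The goal is to show that this already forces $s_2y\in C_{m_3}\subseteq X_+$, contradicting $s_2y\notin X_+$.

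The first step pins down $m_3$. I would apply Lemma~\ref{L-RF1} with $s_1,s_3$ in the roles of $s_1,s_2$ there: we have $s_1x\in C_{n_1}$ and $s_3x\in C_{n_3}$ with $n_1<n_3$, while $s_1y\notin C_{n_1}$ (because $s_1y\notin X_+\supseteq C_{n_1}$) and $s_3y\in C_{m_3}$. Lemma~\ref{L-RF1} then yields $m_3\le n_1$.

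The second step analyzes $t:=s_2s_3^{-1}\in\Gamma$. From $t\cdot(s_3x)=s_2x$ with $s_2x\in C_{n_2}$ and $s_3x\in C_{n_3}$, I would pass to $\Gamma/\Gamma_{n_1}$ via $\pi_{n_1}$. Since $n_1<\min(n_2,n_3)$, for each $n\in\{n_2,n_3\}$ we have $n-1\ge n_1$, so $\gamma_n\in\Gamma_{n-1}\subseteq\Gamma_{n_1}$, and hence $\pi_{n_1}(C_n)=\gamma_n\Gamma_{n_1}=\Gamma_{n_1}$. Therefore $\pi_{n_1}(s_2x)$ and $\pi_{n_1}(s_3x)$ are both the identity coset, and from $\pi_{n_1}(s_2x)=\pi_{n_1}(t)\,\pi_{n_1}(s_3x)$ we conclude $\pi_{n_1}(t)=\Gamma_{n_1}$, i.e.\ $t\in\Gamma_{n_1}$. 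Since $m_3\le n_1$ gives $\Gamma_{n_1}\subseteq\Gamma_{m_3}$, we get $t\in\Gamma_{m_3}$, and then $\pi_{m_3}(s_2y)=\pi_{m_3}(t)\,\pi_{m_3}(s_3y)=\pi_{m_3}(s_3y)=\gamma_{m_3}\Gamma_{m_3}$, so $s_2y\in C_{m_3}\subseteq X_+$. This is the desired contradiction.

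The argument is short, and essentially the only point requiring thought is the choice to compare $s_2y$ with $s_3y$ (rather than with $s_1y$) through the element $t=s_2s_3^{-1}$, together with the observation that the hypothesis $n_1<\min(n_2,n_3)$ is exactly what makes $t$ land in $\Gamma_{n_1}$, hence in $\Gamma_{m_3}$ once Step~1 is available. After that, everything is routine bookkeeping with the quotient maps $\pi_n$ and the nested normal subgroups $\Gamma_n$, plus the single invocation of Lemma~\ref{L-RF1}; I do not foresee any real obstacle.
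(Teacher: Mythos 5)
Your proof is correct and follows essentially the same route as the paper's: you apply Lemma~\ref{L-RF1} to $(s_1,s_3)$ to get $m_3\le n_1$, and your observation that $t=s_2s_3^{-1}\in\Gamma_{n_1}$ is exactly the paper's statement $s_2\Gamma_{n_1}=s_3\Gamma_{n_1}$ (obtained there from $s_i\pi_{n_1}(x)=\Gamma_{n_1}$ for $i=2,3$), after which both arguments conclude $s_2y\in C_{m_3}$, the same contradiction.
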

\begin{proof} For each $i=2, 3$  we have $s_i\pi_{n_i}(x)=\gamma_{n_i}\Gamma_{n_i}$, and hence $s_i\pi_{n_1}(x)=\Gamma_{n_1}$. It follows that $s_2\Gamma_{n_1}=s_3\Gamma_{n_1}$.

Suppose that for some $y\in X$ we have $s_1y, s_2y\not\in X_+$ and $s_3y\in X_+$. Say, $s_3y\in C_{m_3}$.
Applying Lemma~\ref{L-RF1} to $s_1, s_3\in \Gamma$ and $x, y\in X$ we have $m_3\le n_1$. Thus $s_2\Gamma_{m_3}=s_3\Gamma_{m_3}$, and hence $s_2\pi_{m_3}(y)=s_3\pi_{m_3}(y)$. Since $s_2y\not\in C_{m_3}$ and $s_3y\in C_{m_3}$, we have $s_2\pi_{m_3}(y)\neq \gamma_{m_3}\Gamma_{m_3}$ and $s_3\pi_{m_3}(y)=\gamma_{m_3}\Gamma_{m_3}$, which is a contradiction.
\end{proof}

We are ready to prove Lemma~\ref{L-RF3}.

\begin{proof}[Proof of Lemma~\ref{L-RF3}] Assume that $(X_+, X_-)$ has an independence set $M$ with cardinality $6$. For each map $\omega: M\rightarrow \{+, -\}$, fix
a point $x_\omega\in \bigcap_{s\in M}s^{-1}X_{\omega(s)}$. For any such $\omega$ and any $s\in \omega^{-1}(+)$, the point $sx_{\omega}$ lies in $C_n$ for a unique $n\ge 2$. Denote this $n$ by $g(\omega, s)$. By Lemma~\ref{L-RF2} for each such $\omega$ there is a set $B_\omega\subseteq \omega^{-1}(+)$ with cardinality at most $1$ such that the function $s\mapsto g(\omega, s)$ is constant on $\omega^{-1}(+)\setminus B_\omega$.

Take distinct $s_1, s_2\in M$. Define a map $\omega: M\rightarrow \{+, -\}$ by $\omega(s_1)=\omega(s_2)=-$ and $\omega(s)=+$ for all $s\in M\setminus \{s_1, s_2\}$. Then $\omega^{-1}(+)\setminus B_\omega$ has cardinality at least $3$. Say, $g(\omega, s)=n_1$ for all $s\in \omega^{-1}(+)\setminus B_\omega$.

Take $s_3\in \omega^{-1}(+)\setminus B_\omega$. Define a map $\tilde{\omega}: M\rightarrow \{+, -\}$ by $\tilde{\omega}(s_3)=-$ and $\tilde{\omega}(s)=+$ for all $s\in M\setminus \{s_3\}$. Say, $g(\tilde{\omega}, s)=n_2$ for all $s\in \tilde{\omega}^{-1}(+)\setminus B_{\tilde{\omega}}$.
Then $\tilde{\omega}^{-1}(+)\setminus B_{\tilde{\omega}}$ has nonempty intersection with both $\{s_1, s_2\}$ and $\omega^{-1}(+)\setminus (B_\omega \cup \{s_3\})$. Without loss of generality, we may assume $s_1\in \tilde{\omega}^{-1}(+)\setminus B_{\tilde{\omega}}$. Take $s_4\in (\tilde{\omega}^{-1}(+)\setminus B_{\tilde{\omega}})\cap (\omega^{-1}(+)\setminus (B_\omega \cup \{s_3\}))$.

Now we have $s_1, s_3, s_4$ pairwise distinct. We also have $s_3x_\omega, s_4x_\omega\in C_{n_1}$, $s_1x_{\tilde{\omega}}, s_4x_{\tilde{\omega}}\in C_{n_2}$, and $s_1x_\omega, s_3x_{\tilde{\omega}}\not\in X_+$.

From $s_3x_\omega, s_4x_\omega\in C_{n_1}$ and $s_1x_\omega\not\in C_{n_1}$ we have
$$s_3\pi_{n_1}(x_\omega)=\gamma_{n_1}\Gamma_{n_1}=s_4\pi_{n_1}(x_\omega)\neq s_1\pi_{n_1}(x_\omega),$$
and hence $s_3\Gamma_{n_1}=s_4\Gamma_{n_1}\neq s_1\Gamma_{n_1}$. Similarly, from $s_1x_{\tilde{\omega}}, s_4x_{\tilde{\omega}}\in C_{n_2}$ and $s_3x_{\tilde{\omega}}\not\in C_{n_2}$ we have $s_1\Gamma_{n_2}=s_4\Gamma_{n_2}\neq s_3\Gamma_{n_2}$.

If $n_1\ge n_2$, then from $s_3\Gamma_{n_1}=s_4\Gamma_{n_1}$ we have $s_3\Gamma_{n_2}=s_4\Gamma_{n_2}$, which is a contradiction.
If $n_2\ge n_1$, then from $s_1\Gamma_{n_2}=s_4\Gamma_{n_2}$ we have $s_1\Gamma_{n_1}=s_4\Gamma_{n_1}$, which is also a contradiction.
\end{proof}

\section{Free groups} \label{S-free}

In this section we prove Theorem~\ref{T-main}.

Let $r\ge 2$ and $\Gamma=F_r$ be the free group with $r$ generators $S=\{a, b, a_3, \dots, a_r\}$.

Denote by $Y$ the Gromov boundary of $\Gamma$.
This is the set of all infinite reduced words in $S\cup S^{-1}$, i.e. the set of elements $x=(x_n)_{n\in \Nb}$ in $(S\cup S^{-1})^\Nb$ satisfying $x_{n+1}\neq x_n^{-1}$ for all $n\in \Nb$. It is a closed subset of $(S\cup S^{-1})^\Nb$, hence compact metrizable. The group $\Gamma$ acts on $Y$ continuously by concatenation and cancellation.
Clearly this action is minimal and effective. By \cite[page 161]{Glasner75a} this action  is also strongly proximal.

It is well known that $\Gamma$ is residually finite \cite[Corollary C-1.126]{Rotman}. Let $\{\Gamma_n\}$ be a strictly decreasing sequence of finite-index normal subgroups of $\Gamma$ with $\bigcap_{n\in \Nb}\Gamma_n=\{e_\Gamma\}$. Set $Z=\varprojlim_{n\to \infty}\Gamma/\Gamma_n$ and $X=Y\times Z$. As in Section~\ref{S-RF}, $\Gamma$ is naturally a subgroup of the compact metrizable totally disconnected group $Z$ and has a natural left shift action on $Z$. This action $\Gamma\curvearrowright Z$ is minimal and free.
Then the product action $\Gamma\curvearrowright X$ is also free. Denote by $\mu_Z$ the normalized Haar measure on $Z$, which is nonatomic and $\Gamma$-invariant.
For each $n\in \Nb$, denote by $\pi_n$ the natural homomorphism $Z\rightarrow \Gamma/\Gamma_n$.

\begin{lemma} \label{L-minimal}
The product action $\Gamma\curvearrowright X$ is minimal.
\end{lemma}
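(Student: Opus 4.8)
The statement to prove is that the product action $\Gamma\curvearrowright X = Y\times Z$ is minimal, where $\Gamma=F_r$ acts on its Gromov boundary $Y$ by concatenation/cancellation and on $Z=\varprojlim \Gamma/\Gamma_n$ by left translation. The standard strategy for minimality of a product action is to show that for any nonempty open box $U\times V$ with $U\subseteq Y$, $V\subseteq Z$ open, and any point $(y,z)\in X$, there is some $s\in\Gamma$ with $s(y,z)\in U\times V$; equivalently, the orbit of every point is dense. Since $X$ is metrizable and compact, it suffices to hit a neighborhood basis, and the natural basis on $Y$ consists of cylinder sets (words with a prescribed finite reduced prefix $w$), while on $Z$ a basis is given by cosets $\pi_n^{-1}(g\Gamma_n)$.

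The key point is the following: I would fix a target cylinder $U=U_w\subseteq Y$ determined by a reduced word $w$, a target coset $V=\pi_n^{-1}(g\Gamma_n)\subseteq Z$, and an arbitrary starting point $(y,z)$. I want $s\in\Gamma$ such that $sy$ begins with $w$ and $sz\in g\Gamma_n\Gamma_n$ — i.e., $s\in g z^{-1}\Gamma_n$ as an equation in $\Gamma/\Gamma_n$ after identifying $\Gamma$ with its dense image, so $sz$ lands in the prescribed coset precisely when $s$ lies in a fixed coset $h\Gamma_n$ of $\Gamma_n$. So the real task is: \emph{within any fixed coset $h\Gamma_n$ of the finite-index subgroup $\Gamma_n$, find an element $s$ such that $sy$ starts with the reduced word $w$.} Here I would exploit the freeness: since $\Gamma_n$ has finite index, it contains a cyclically reduced element of arbitrarily large length (indeed $\Gamma_n$ is a nonabelian free group of finite rank). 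Pick $t\in\Gamma_n$ cyclically reduced and long; then for large $k$, the element $s = w\, t^k\, h$ (or a suitable variant) lies in $h\Gamma_n$ because $wt^k h \equiv h \pmod{\Gamma_n}$ — wait, that needs $w t^k \in\Gamma_n$, which generally fails. Instead I would choose $t\in\Gamma_n$ with the property that $w^{-1}$ times a specified element lies in $\Gamma_n$: concretely, set $s_0\in h\Gamma_n$ arbitrary, and replace it by $s = w (w^{-1} s_0 c)$ where $c\in\Gamma_n$ is chosen so that no cancellation between $w$ and $w^{-1}s_0 c$ occurs and so that $s$ still lies in $h\Gamma_n$; since $w(w^{-1}s_0 c) = s_0 c$ and $c\in\Gamma_n$, indeed $s\in s_0\Gamma_n = h\Gamma_n$. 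The freeness guarantees we can pick $c\in\Gamma_n$ long enough (and with a controlled first letter, avoiding the last letter of $w^{-1}s_0$'s inverse) so that the reduced form of $s=s_0 c$ literally starts with $w$ provided $s_0$ itself starts with $w$ after the cancellations — more carefully, one first reduces $s_0 c$ and arranges the prefix. The cleanest formulation: given $w$, the set $\{s\in\Gamma: sy \in U_w\}$ consists of those $s$ whose reduced form, after cancelling against $y$, leaves a word starting with $w$; this set meets every coset of every finite-index subgroup because it contains, for each $g\in\Gamma$, all sufficiently long elements of the form $w g' $ with $g'$ in a prescribed "generic" position relative to $y$, and finite-index subgroups contain such elements by a ping-pong / length argument.

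Here is the argument I would actually write down to keep it short. Minimality of $\Gamma\curvearrowright Y$ and of $\Gamma\curvearrowright Z$ are already known (stated in the excerpt). For the product, fix $(y,z)\in X$ and a basic open set $U_w\times \pi_n^{-1}(g\Gamma_n)$. By minimality of $\Gamma\curvearrowright Z$ (equivalently, density of $\Gamma$ in $Z$), the set $T=\{s\in\Gamma: sz\in \pi_n^{-1}(g\Gamma_n)\}$ is a nonempty left coset of $\Gamma_n$ in $\Gamma$, hence infinite. Now I claim $sy\in U_w$ for some $s\in T$. Pick any $s_0\in T$. Let $u\in S\cup S^{-1}$ be a letter different from the inverse of the last letter of $s_0$ (in reduced form) and different from the inverse of the first letter of $y$ after cancellation with $s_0$ — here $\Gamma_n$, being a nonabelian free group of finite index (Nielsen–Schreier), contains a cyclically reduced element $c$ of length $>2|w|+2|s_0|$ whose first and last letters we may prescribe from among at least two choices (since $\mathrm{rank}(\Gamma_n)\ge 2$), so we can arrange $s_0 c$ to be reduced with no cancellation at the junction and with $\mathrm{reduced}(s_0 c)$ ending in a long tail of $c$. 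Then set $s=(\text{prefix }w)\cdot(\text{something in }\Gamma_n)$: precisely, note $w\cdot w^{-1} s_0 c = s_0 c\in s_0\Gamma_n=T$, and by choosing $c$ long enough with appropriate first letter the reduced word for $s_0 c$ — hence for $(s_0 c)\, y$ — begins with $w$, because the long central block of $c$ shields $w$ from cancellation against $y$. Thus $s=s_0 c\in T$ and $sy\in U_w$, so $s(y,z)\in U_w\times\pi_n^{-1}(g\Gamma_n)$, proving the orbit of $(y,z)$ is dense.

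\textbf{Main obstacle.} The delicate bookkeeping is ensuring \emph{simultaneously} that the chosen element lands in the prescribed coset of $\Gamma_n$ (a purely algebraic constraint) \emph{and} that its reduced form, after concatenation with $y$, exhibits the prefix $w$ (a word-combinatorial constraint about cancellation). The resolution is the freeness of $\Gamma_n$ (Nielsen–Schreier) providing long cyclically reduced elements with prescribable boundary letters, which lets one "insert a long buffer" so that the algebraic adjustment lives entirely inside $\Gamma_n$ while the visible prefix is dictated by $w$. I expect this buffering argument — choosing $c\in\Gamma_n$ long with controlled first/last letters and tracking the cancellations at the three junctions ($s_0$–$c$, $c$–$c$ internally if needed, and $s_0 c$–$y$) — to be the only nonroutine part; everything else (passing to a neighborhood basis, density of $\Gamma$ in $Z$, the coset description of $T$) is standard.
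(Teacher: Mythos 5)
Your route is genuinely different from the paper's: the paper gets minimality in one stroke from Glasner's disjointness theorem, observing that $\Gamma\curvearrowright Z$ is weakly non-contractible (it is minimal and the Haar measure $\mu_Z$ is invariant with full support) and that every such action is disjoint from every minimal strongly proximal action, in particular from $\Gamma\curvearrowright Y$ \cite[Theorem 6.1]{Glasner75a}. You instead attempt a direct orbit-density computation. Your reduction is fine up to a point: basic open sets have the form $U_w\times\pi_n^{-1}(g\Gamma_n)$, and since $\Gamma_n$ is normal the set $T=\{s\in\Gamma: sz\in\pi_n^{-1}(g\Gamma_n)\}$ is a left coset $h\Gamma_n$, so everything hinges on finding $s\in h\Gamma_n$ with $sy\in U_w$.

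The step you give for that fails as written. You take $s=s_0c$ with $s_0\in T$ and $c\in\Gamma_n$ a long buffer appended on the right, and assert that the reduced word of $s_0c$ begins with $w$. Right multiplication cannot create the prefix $w$: with no cancellation at the junction, the reduced form of $s_0c$ begins with the first letters of $s_0$, which have nothing to do with $w$; and rewriting $s_0c$ as $w\,(w^{-1}s_0c)$ is vacuous, since it changes neither the group element nor its reduced form --- if $s_0$ does not already begin with $w$, then $w^{-1}s_0c$ begins with the inverse of the last letter of $w$ no matter how $c$ is chosen (appending on the right cannot alter the first letter unless $c$ cancels all of $w^{-1}s_0$, which you never arrange). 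So the central mechanism --- producing the prefix $w$ while staying in the prescribed coset --- is missing. The skeleton is repairable: for instance, $s=wm$ lies in $h\Gamma_n$ iff $m\in w^{-1}h\Gamma_n$, and by normality $\Gamma_n$ contains $x^{N_x}$ for every letter $x$ (with $N_x$ the order of $x$ in $\Gamma/\Gamma_n$), so one can multiply a representative $m$ on the left and right by such elements to prescribe its first and last letters, killing cancellation against $w$ and against $y$; or, cleaner, note that $\Gamma_n$ has finite index, hence $\Gamma_n\curvearrowright Y$ is still minimal, choose $c\in\Gamma_n$ with $cy\in s_0^{-1}U_w$, and set $s=s_0c\in T$ --- your formula, but with a justification that actually works. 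Note also that even granted the prefix, your ``shielding'' claim needs the last letter of $s$ to differ from the inverse of the first letter of $y$ (easily arranged), not merely that $c$ is long. By contrast, the paper's argument requires none of this word bookkeeping, at the price of invoking the disjointness theorem from \cite{Glasner75a}.
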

\begin{proof} An action $\Gamma\curvearrowright Z'$ is called {\it weakly non-contractible} if it is minimal and there is some $\mu\in \sM(Z')$ with support $Z'$ such that the orbit closure of $\mu$ in $\sM(Z')$ is minimal. Every weakly non-contractible action $\Gamma\curvearrowright Z'$ is disjoint from every minimal strongly proximal action $\Gamma\curvearrowright Y'$ in the sense that the product action $\Gamma\curvearrowright Y'\times Z'$ is minimal \cite[Theorem 6.1]{Glasner75a}. Since $\mu_Z$ has support $Z$ and is $\Gamma$-invariant, we know that $\Gamma\curvearrowright Z$ is weakly non-contractible. Therefore the product action $\Gamma\curvearrowright Y\times Z$ is minimal.
\end{proof}

For each nontrivial $s\in \Gamma$, we say that $y\in Y$ {\it starts with $s$} if $y=sy'$ for some $y'\in Y$ such that the last letter of (the reduced form of) $s$ is different from the inverse of the first letter of $y'$. Similarly, we shall talk about $t\in \Gamma$ starting or ending with $s$.  For any nontrivial $s\in \Gamma$, denote by $V_s$ the set of elements in $Y$ starting with $s$. Denote by $a^\infty$ the element in $Y$ taking constant value $a$.

For each $n\ge 2$ take $\gamma_n\in \Gamma_{n-1}\setminus \Gamma_n$, and set $u_n=a^nba^{-n}b^{-1}$, $D_n=V_{u_n}$ and $C_n=\pi_n^{-1}(\gamma_n\Gamma_n)$. Then $D_n\times C_n$ is a closed and open subset of $X$. The sets $D_n\times C_n$ are pairwise disjoint, $(a^\infty, e_\Gamma)\not\in D_n\times C_n$, and $D_n\times C_n\rightarrow \{(a^{\infty}, e_\Gamma)\}$ as $n\to \infty$ in the sense that for every neighborhood $U$ of $(a^\infty, e_\Gamma)$ in $X$ one has $D_n\times C_n\subseteq U$ for all large enough $n\in \Nb$. Set $X_+=\bigcup_{n\ge 2}(D_n\times C_n)$ and $X_-=X\setminus (X_+\cup\{(a^\infty, e_\Gamma)\})$. Then every neighborhood of $(a^\infty, e_\Gamma)$ in $X$ intersects with both $X_+$ and $X_-$.

 Define a function $f: X\setminus \{(a^\infty, e_\Gamma)\}\rightarrow \{1, -1\}$ by $f(x)=1$ if $x\in X_+$ and $f(x)=-1$ if $x\in X_-$. Since $D_n\times C_n\to \{(a^\infty, e_\Gamma)\}$ as $n\to \infty$, the function $f$ is continuous.
 Since every neighborhood of $(a^\infty, e_\Gamma)$ in $X$ intersects with both $X_+$ and $X_-$, the function $f$ cannot be extended to $X$ continuously.
Therefore we can apply McMahon's construction in Section~\ref{S-construction} to obtain the minimal action $\Gamma\curvearrowright X_f$ and the factor map $\pi_f: X_f\rightarrow X$.

Denote by $\pi_Y$ the projection $X\rightarrow Y$.

\begin{lemma} \label{L-RIM}
The extension $\pi_Y\circ \pi_f$ has a unique RIM.
\end{lemma}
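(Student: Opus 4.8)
The plan is to exhibit one RIM explicitly and then argue uniqueness via the structure of the fibers of $\pi_f$. From the discussion at the end of Section~\ref{S-construction}, the map $\lambda\colon Y\to\sM(X_f)$ given by $\lambda(y)=(\delta_y\times\mu_Z)_f$ is well defined (each $\delta_y\times\mu_Z$ is nonatomic, so Lemma~\ref{L-nonatomic} applies), is continuous and $\Gamma$-equivariant, and the support of $\lambda(y)$ lies inside $\pi_f^{-1}(\pi_Y^{-1}(y))=(\pi_Y\circ\pi_f)^{-1}(y)$; hence $\lambda$ is a RIM for $\pi_Y\circ\pi_f$. So the content is uniqueness.

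For uniqueness, suppose $\lambda'\colon Y\to\sM(X_f)$ is any RIM for $\pi_Y\circ\pi_f$. Pushing forward by $\pi_f$ gives a continuous $\Gamma$-equivariant map $y\mapsto\pi_{f*}\lambda'(y)$ into $\sM(X)$ whose image over $y$ is supported in $\pi_Y^{-1}(y)=\{y\}\times Z$; that is, it is a RIM for the projection $\pi_Y\colon Y\times Z\to Y$. The key sub-claim is that the projection $\pi_Y$ has a \emph{unique} RIM, namely $y\mapsto\delta_y\times\mu_Z$. This should follow because $\Gamma\curvearrowright Y$ is strongly proximal while $\Gamma\curvearrowright Z$ carries the invariant measure $\mu_Z$: a RIM for $\pi_Y$ amounts to a continuous equivariant assignment $y\mapsto\nu_y\in\sM(Z)$, and averaging/strong-proximality considerations (strong proximality of $Y$ forces the barycenter $\int_Y\nu_y\,d\xi(y)$ over any quasi-invariant $\xi$ to be $\Gamma$-invariant, hence to be $\mu_Z$ by uniqueness of the Haar measure as the unique invariant measure on the minimal group rotation $Z$) should pin down $\nu_y=\mu_Z$ for $\xi$-a.e.\ $y$, and then continuity propagates this to all $y$. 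I would prove this sub-claim carefully, perhaps by noting that $Z$ is a compact group on which $\Gamma$ acts by translations by a dense subgroup, so it is uniquely ergodic, and combining this with minimality and strong proximality of $Y$ (the strongly proximal factor absorbs no nontrivial invariant data).

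Having shown $\pi_{f*}\lambda'(y)=\delta_y\times\mu_Z$ for all $y$, it remains to lift uniqueness through $\pi_f$. For each fixed $y$, $\lambda'(y)$ is a probability measure on $X_f$ with $\pi_{f*}\lambda'(y)=\delta_y\times\mu_Z$, which is nonatomic; by Lemma~\ref{L-nonatomic} the fiber $(\pi_{f*})^{-1}(\delta_y\times\mu_Z)$ is the single point $(\delta_y\times\mu_Z)_f=\lambda(y)$. Hence $\lambda'(y)=\lambda(y)$ for every $y\in Y$, proving uniqueness. The main obstacle is the sub-claim that $\pi_Y$ has a unique RIM; everything else is a direct application of Lemma~\ref{L-nonatomic} and the explicit construction. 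I expect the cleanest route to the sub-claim is: if $y\mapsto\nu_y$ is a continuous equivariant section into $\sM(Z)$, then for any $s\in\Gamma$ and any $y$ we get $\nu_{sy}=s_*\nu_y$; choosing a sequence $s_k$ with $s_k y\to$ a fixed base point (possible by minimality of $Y$) while $s_k$ ranges over a set dense enough in the group $Z$, one forces $\nu_y$ to be translation-invariant on $Z$, hence equal to $\mu_Z$.
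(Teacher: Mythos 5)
Your overall skeleton coincides with the paper's: exhibit the RIM $y\mapsto(\delta_y\times\mu_Z)_f$, push an arbitrary RIM for $\pi_Y\circ\pi_f$ forward under $\pi_{f*}$ to get a RIM for $\pi_Y$, show that the latter must be $y\mapsto\delta_y\times\mu_Z$, and then lift uniqueness through $\pi_f$ via Lemma~\ref{L-nonatomic}. The only divergence is the sub-claim that $\pi_Y$ has a unique RIM. The paper disposes of it in one line: $\pi_Y$ is a group extension (with $Z$ acting by right translations on the second coordinate), $\Gamma\curvearrowright X$ is minimal by Lemma~\ref{L-minimal}, and group extensions between minimal actions have a unique RIM \cite[Corollary 3.7]{Glasner75b}.

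Your two suggested arguments for that sub-claim are not both sound as stated. The barycenter heuristic fails: for a quasi-invariant but non-invariant $\xi$ on $Y$ one has $s_*\bigl(\int_Y\nu_y\,d\xi(y)\bigr)=\int_Y\nu_y\,d(s_*\xi)(y)$, which need not equal $\int_Y\nu_y\,d\xi(y)$; strong proximality does not repair this, and in fact $\sM(Y)$ contains no $\Gamma$-invariant measure at all since $\Gamma\curvearrowright Y$ is a boundary action, so there is no invariant $\xi$ to average against. Your ``cleanest route'' is essentially correct, but the parenthetical justification is too weak: minimality of $Y$ gives $s_ky\to y$ (or to a base point), yet it does not let you simultaneously prescribe the limit of $s_k$ in $Z$. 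What you need is that for every $z_0\in Z$ there are $s_k\in\Gamma$ with $s_ky\to y$ in $Y$ and $s_k\to z_0$ in $Z$; this is exactly minimality of the product action $\Gamma\curvearrowright Y\times Z$, i.e.\ Lemma~\ref{L-minimal} (the disjointness of the strongly proximal action on $Y$ from the weakly non-contractible action on $Z$), not minimality of $Y$ alone. Once you invoke that, the computation $\nu_y=\lim_k\nu_{s_ky}=\lim_k s_{k*}\nu_y=z_{0*}\nu_y$ shows $\nu_y$ is invariant under every translation of $Z$, hence $\nu_y=\mu_Z$, and your argument becomes a legitimate, more hands-on alternative to the paper's appeal to \cite[Corollary 3.7]{Glasner75b}. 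As written, though, the justification of the key simultaneous approximation is a genuine gap.
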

\begin{proof} From our discussion at the end of Section~\ref{S-construction} and using the notation there, we know that $y\mapsto \delta_y\times \mu_Z$ is a RIM for $\pi_Y$ and
$y\mapsto (\delta_y\times \mu_Z)_f$ is a RIM for  $\pi_Y\circ \pi_f$.

An extension $\pi': X'\rightarrow Y'$ between continuous actions $\Gamma\curvearrowright X'$ and $\Gamma\curvearrowright Y'$ is called a {\it group extension} if there is a  compact metrizable group $Z'$ acting continuously on $X'$ denoted by $(x', z')\rightarrow x'z'$ for $x'\in X'$ and $z'\in Z'$ such that $s(x'z')=(sx')z'$ for all $s\in \Gamma, x'\in X', z'\in Z'$, and $(\pi')^{-1}(\pi'(x'))=x'Z'$ for all $x'\in X'$. Every group extension between minimal actions has a unique RIM \cite[Corollary 3.7]{Glasner75b}.

Clearly $\pi_Y$ is a group extension. By Lemma~\ref{L-minimal} the action $\Gamma\curvearrowright X$ is minimal. Therefore $\pi_Y$ has a unique RIM, which must be $y\mapsto \delta_y\times \mu_Z$.

Let $y\mapsto \mu_y$ be a RIM for $\pi_Y\circ \pi_f$. Then $y\mapsto \pi_{f*}(\mu_y)$ is a RIM for $\pi_Y$. Thus $\pi_{f*}(\mu_y)=\delta_y\times \mu_Z$ for every $y\in Y$. By Lemma~\ref{L-nonatomic} we get $\mu_y=(\delta_y\times \mu_Z)_f$ for every $y\in Y$. Therefore $\pi_Y\circ \pi_f$ has a unique RIM.
\end{proof}

\begin{lemma} \label{L-distal}
The extension $\pi_Y\circ \pi_f$ is point-distal.
\end{lemma}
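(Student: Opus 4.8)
The plan is to verify point-distality using an explicitly chosen base point. Since $\Gamma\curvearrowright X_f$ is minimal, every point of $X_f$ has dense orbit, so it suffices to produce one point $\tilde{x}\in X_f$ such that for every $\tilde{x}'\neq\tilde{x}$ lying in the same fibre of $\pi_Y\circ\pi_f$, the orbit closure of $(\tilde{x},\tilde{x}')$ in $X_f^2$ misses the diagonal. The key observation is that if $\tilde{x}$ is placed over a $Y$-fibre disjoint from $\Gamma x_1$, where $x_1=(a^\infty,e_\Gamma)$, then the branching of $\pi_f$ never intervenes. Concretely, $Y$ is uncountable while $\Gamma$ is countable, so we may fix $y_0\in Y\setminus\Gamma a^\infty$. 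Since $\Gamma x_1=\{(sa^\infty,s):s\in\Gamma\}$, the set $\{y_0\}\times Z$ is disjoint from $\Gamma x_1$, so by property (1) of McMahon's construction $\pi_f$ restricts to a bijection of $(\pi_Y\circ\pi_f)^{-1}(y_0)=\pi_f^{-1}(\{y_0\}\times Z)$ onto $\{y_0\}\times Z$. Fix $z_0\in Z$ and let $\tilde{x}$ be the unique point of $\pi_f^{-1}((y_0,z_0))$.

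Next, let $\tilde{x}'$ be any point of $(\pi_Y\circ\pi_f)^{-1}(y_0)$ with $\tilde{x}'\neq\tilde{x}$, and write $\pi_f(\tilde{x}')=(y_0,z')$; by the injectivity just noted, $z'\neq z_0$. The main step is to show that the orbit closure of $\bigl((y_0,z_0),(y_0,z')\bigr)$ in $X^2=(Y\times Z)^2$ is disjoint from the diagonal. For this, recall that $\Gamma$ acts on the compact group $Z$ by left translations, so $(sz_0)^{-1}(sz')=z_0^{-1}z'$ for all $s\in\Gamma$. Hence the continuous map $X^2\to Z$, $\bigl((y,z),(\hat{y},\hat{z})\bigr)\mapsto z^{-1}\hat{z}$, is constant with value $z_0^{-1}z'$ on the entire $\Gamma$-orbit of $\bigl((y_0,z_0),(y_0,z')\bigr)$, hence also on its closure. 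Since $z_0^{-1}z'\neq e_\Gamma$ while this map is identically $e_\Gamma$ on the diagonal of $X^2$, the orbit closure meets the diagonal nowhere.

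Finally I would lift this back to $X_f$. The map $\pi_f$ is continuous and $\Gamma$-equivariant, hence so is $\pi_f\times\pi_f\colon X_f^2\to X^2$, and it carries the orbit closure of $(\tilde{x},\tilde{x}')$ into the orbit closure of $\bigl(\pi_f(\tilde{x}),\pi_f(\tilde{x}')\bigr)=\bigl((y_0,z_0),(y_0,z')\bigr)$. So a diagonal point $(\tilde{u},\tilde{u})$ in the orbit closure of $(\tilde{x},\tilde{x}')$ would map to the diagonal point $\bigl(\pi_f(\tilde{u}),\pi_f(\tilde{u})\bigr)$ in the orbit closure of $\bigl((y_0,z_0),(y_0,z')\bigr)$, contradicting the previous paragraph. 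Thus the orbit closure of $(\tilde{x},\tilde{x}')$ avoids the diagonal; since $\tilde{x}'$ was arbitrary in the fibre over $y_0$, the extension $\pi_Y\circ\pi_f$ is point-distal with witness $\tilde{x}$.

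I do not anticipate a genuine obstacle here. Once the base point is chosen over $Y\setminus\Gamma a^\infty$, the two-point fibres of $\pi_f$ play no role, and the remaining content reduces to the elementary fact that a left-translation action on a compact group preserves the displacement $z^{-1}\hat{z}$ between the two coordinates. The one point requiring a little care is verifying that $\pi_f$ is genuinely one-to-one over $\{y_0\}\times Z$, which is precisely where property (1) of McMahon's construction is used; one should also recall that $\Gamma\curvearrowright X_f$ is minimal, so that $\tilde{x}$ has dense orbit.
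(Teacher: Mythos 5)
Your argument is correct and is essentially the paper's proof: the invariance of the displacement map $\bigl((y,z),(\hat{y},\hat{z})\bigr)\mapsto z^{-1}\hat{z}$ is exactly the paper's use of a translation-invariant metric on $Z$, combined with the one-point fibres of $\pi_f$ off $\Gamma(a^\infty,e_\Gamma)$ and minimality of $\Gamma\curvearrowright X_f$. The only (harmless) difference is that you restrict the witness to a fibre over some $y_0\in Y\setminus\Gamma a^\infty$, whereas the paper shows any point of $\pi_f^{-1}\bigl(X\setminus\Gamma(a^\infty,e_\Gamma)\bigr)$ works.
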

\begin{proof} Denote by $\pi_Z$ the projection $X\rightarrow Z$.

Let $\tilde{x}\in \pi_f^{-1}(X\setminus \Gamma(a^\infty, e_\Gamma))$ and $\tilde{x}'\in X_f$ with $\pi_Y\circ \pi_f(\tilde{x})=\pi_Y\circ \pi_f(\tilde{x}')$ such that
the orbit closure of $(\tilde{x}, \tilde{x}')$ in $X_f^2$ intersects with the diagonal. Then the orbit closure of $(\pi_Z\circ \pi_f(\tilde{x}), \pi_Z\circ \pi_f(\tilde{x}'))$ in $Z^2$ intersects with the diagonal. Note that the compact metrizable group $Z$ has a translation-invariant metric, which is then invariant under the $\Gamma$-action. It follows that $ \pi_Z\circ \pi_f(\tilde{x})=\pi_Z\circ \pi_f(\tilde{x}')$. Therefore $\pi_f(\tilde{x})=\pi_f(\tilde{x'})$, and hence $\tilde{x}=\tilde{x}'$. Since $\Gamma\curvearrowright X$ is minimal by Lemma~\ref{L-minimal}, this means that every point in $\pi_f^{-1}(X\setminus \Gamma(a^\infty, e_\Gamma))$ witnesses the definition of point-distality.
\end{proof}

We remark that Glasner showed that every RIM extension between tame minimal actions is point-distal \cite[Theorem 4.4]{Glasner18}. Thus Lemma~\ref{L-distal} also follows directly once we show later that $\Gamma\curvearrowright X_f$ is null.

\begin{lemma} \label{L-open}
The map $\pi_Y\circ \pi_f$ is not open.
\end{lemma}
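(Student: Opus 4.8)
The plan is to exhibit an open subset of $X_f$ whose image under $\pi:=\pi_Y\circ\pi_f$ fails to be open in $Y$; the natural candidate is the clopen set $U:=\tilde f^{-1}(1)$ (written $X_{f,+}$ in the text). Write $x_1=(a^\infty,e_\Gamma)$. The combinatorial input is a prefix observation in $\Gamma=F_r$: for $j\ge 1$ set $w_j:=a^jb^\infty\in Y$ (an infinite reduced word). Then $w_j\neq a^\infty$, $w_j\to a^\infty$ in $Y$, and $w_j\notin V_{u_n}$ for every $n\ge 2$. Indeed an element of $V_{u_n}$ has initial segment $u_n=a^nba^{-n}b^{-1}$, while $w_j$ has initial segment $a^jb^\infty$, and a short case analysis ($n<j$, $n=j$, $n>j$) shows $u_n$ is not a prefix of $w_j$. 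In particular $(\{w_j\}\times Z)\cap X_+=\emptyset$, where $X_+=\bigcup_{n\ge 2}(D_n\times C_n)$.

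The second ingredient is a point $\tilde x_+\in\pi_f^{-1}(x_1)$ with $\tilde f(\tilde x_+)=1$; then $\tilde x_+\in U$ and $\pi(\tilde x_+)=a^\infty$. To build it, recall that every neighbourhood of $x_1$ meets $X_+$, so there is a net $x_\alpha\to x_1$ with $x_\alpha\in X_+$; in particular $x_\alpha\neq x_1$ and $f(x_\alpha)=1$. Pick $\tilde x_\alpha\in\pi_f^{-1}(x_\alpha)$ for each $\alpha$. Since $\pi_f(\tilde x_\alpha)=x_\alpha\neq x_1$, the identity $\tilde f=f\circ\pi_f$ on $X_f\setminus\pi_f^{-1}(x_1)$ gives $\tilde f(\tilde x_\alpha)=f(x_\alpha)=1$. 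Passing to a convergent subnet, the limit $\tilde x_+$ lies in $\pi_f^{-1}(x_1)$ and, by continuity of $\tilde f$, satisfies $\tilde f(\tilde x_+)=1$.

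Now suppose, for contradiction, that $\pi$ is open. Then $\pi(U)$ is open in $Y$ and contains $\pi(\tilde x_+)=a^\infty$, hence contains $w_j$ for all large $j$. Fix such a $j$ and choose $\tilde x\in U$ with $\pi(\tilde x)=w_j$. Because $w_j\neq a^\infty$ we have $\pi_f(\tilde x)\neq x_1$, so again $\tilde f(\tilde x)=f(\pi_f(\tilde x))$; as $\tilde x\in U$ this forces $f(\pi_f(\tilde x))=1$, i.e. $\pi_f(\tilde x)\in X_+$. Its first coordinate is $\pi_Y(\pi_f(\tilde x))=\pi(\tilde x)=w_j$, so $\pi_f(\tilde x)=(w_j,z)\in D_n\times C_n$ for some $n\ge 2$, whence $w_j\in V_{u_n}$ --- contradicting the prefix observation. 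Therefore $\pi_Y\circ\pi_f$ is not open.

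The only step requiring genuine care is the existence of $\tilde x_+$: it is precisely here that one uses that $f$ does not extend continuously to $x_1$ (equivalently, that neighbourhoods of $x_1$ meet $X_+$), so that the ``positive sheet'' $U$ really meets the two-point fibre $\pi_f^{-1}(x_1)$. This is a standard feature of McMahon's construction; everything else is bookkeeping with prefixes of reduced words together with the identity $\tilde f=f\circ\pi_f$ off $\pi_f^{-1}(x_1)$.
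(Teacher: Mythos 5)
Your proof is correct and is essentially the paper's argument: both take the clopen set $X_{f,+}=\tilde f^{-1}(1)$ and show its image under $\pi_Y\circ\pi_f$, namely $\{a^\infty\}\cup\bigcup_{n\ge 2}D_n$, fails to be open at $a^\infty$. You merely make explicit two points the paper leaves implicit --- that the fibre $\pi_f^{-1}(a^\infty,e_\Gamma)$ meets $X_{f,+}$ (your net argument) and that the words $a^jb^\infty\to a^\infty$ lie outside every $D_n$ --- so no further changes are needed.
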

\begin{proof} Denote by $\tilde{f}$ the continuous  extension of $f\circ \pi_f: \pi_f^{-1}(X\setminus \{(a^\infty, e_\Gamma)\})\rightarrow \{1, -1\}$ to $X_f$. Set $X_{f, +}=\tilde{f}^{-1}(1)$.
Then $X_{f, +}$ is an open subset of $X_f$.
But
$$\pi_Y\circ \pi_f(X_{f,+})=\pi_Y(\{(a^\infty, e_\Gamma)\}\cup X_+)=\{a^\infty\}\cup \bigcup_{n\ge 2}D_n$$
is not open.  Therefore $\pi_Y\circ \pi_f$ is not open.
\end{proof}

We are left to show  that $\Gamma\curvearrowright X_f$ is null, which is also the most technical part. For this purpose we need to make some preparation.

\begin{lemma} \label{L-free6}
Let $t\in \Gamma$ and $y\in Y$. Assume that $t$ does not start with $u_n$.
Then $ty\in D_n$ if and only if $y$ starts with $t^{-1}u_n$.
\end{lemma}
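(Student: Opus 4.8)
The plan is to unwind the definitions of "starts with" on both sides and reduce everything to a statement about reduced words in the free group. Recall $u_n = a^n b a^{-n} b^{-1}$, whose reduced form has first letter $a$ and last letter $b^{-1}$, and $D_n = V_{u_n}$ is the set of $y \in Y$ whose reduced form begins with the word $u_n$. The key point is that for the element $ty$, cancellation between $t$ and $y$ can be complicated, but the hypothesis that $t$ does not start with $u_n$ gives us enough control.

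First I would prove the "if" direction. Suppose $y$ starts with $t^{-1}u_n$, i.e.\ $y = t^{-1}u_n y'$ for some $y' \in Y$ with no cancellation at the junctions (the last letter of $t^{-1}u_n$ in reduced form is the last letter of $u_n$, namely $b^{-1}$, provided there is no cancellation between $t^{-1}$ and $u_n$; one must check that writing $y = t^{-1}u_n y'$ "starting with $t^{-1}u_n$" already encodes that $t^{-1}u_n$ is in reduced form). Then $ty = u_n y'$ with the appropriate non-cancellation, so $ty \in V_{u_n} = D_n$. The main thing to verify here is that multiplying by $t$ on the left exactly cancels the $t^{-1}$ prefix and leaves $u_n y'$ reduced; this is automatic from the definition of "$y$ starts with $t^{-1}u_n$".

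Next, and this is where the hypothesis is used, I would prove the "only if" direction: assume $ty \in D_n$, so $ty = u_n z$ for some $z \in Y$ with $u_n z$ reduced. Write $y$ in reduced form and track the cancellation in the product $ty$. Since $ty$ begins with the reduced word $u_n$, and since (by hypothesis) $t$ itself does not begin with $u_n$, the cancellation in $ty$ cannot be confined to "inside $t$"; more precisely, I would argue that $t$ must be a prefix (in reduced form, up to the cancellation bookkeeping) of $u_n z$ — equivalently, that either $t$ starts with $u_n$ (excluded) or $u_n z = t \cdot (t^{-1} u_n z)$ with $t^{-1}u_n$ a reduced word, forcing $y = t^{-1}u_n z$ to start with $t^{-1}u_n$. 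The case analysis is on whether the reduced length of $t$ is at most the length of $u_n$ or larger, and on where the cancellation front between $t$ and $y$ stops relative to the $u_n$-prefix of $ty$.

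The main obstacle will be the careful bookkeeping of cancellation in the free group: one must rule out the possibility that $t$ and $y$ cancel "past" the point where $ty$ and $u_n$ agree, which is exactly what the assumption "$t$ does not start with $u_n$" is designed to prevent, but turning that into a clean argument requires writing $t$, $y$, $u_n$, $z$ as reduced words and comparing prefixes letter by letter. I expect the cleanest formulation is: for reduced words, $ty \in V_{u_n}$ iff $y \in V_{t^{-1}u_n}$ \emph{whenever} $t^{-1}u_n$ is reduced and $t$ is not in $V_{u_n}$ as a word, and the hypothesis guarantees precisely the applicable case. Everything else is routine free-group combinatorics.
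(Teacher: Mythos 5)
There is a genuine gap, and it sits exactly where the hypothesis has to do its work. The one fact that drives both directions is: since $t$ does not start with $u_n$, the cancellation in the product $t^{-1}\cdot u_n$ cannot consume all of $u_n$, so the \emph{reduced form} of $t^{-1}u_n$ still ends with $b^{-1}$. Your plan never establishes this; instead you repeatedly substitute the stronger condition that there is no cancellation at all between $t^{-1}$ and $u_n$ (``provided there is no cancellation between $t^{-1}$ and $u_n$'', ``with $t^{-1}u_n$ a reduced word'', and in your closing formulation ``whenever $t^{-1}u_n$ is reduced''). That condition is not implied by the hypothesis: for $t=a$ one has $t^{-1}u_n=a^{n-1}ba^{-n}b^{-1}$, obtained with cancellation, yet the lemma must cover such $t$ (in the application $t=s_1s_2^{-1}$ is arbitrary). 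Consequently the dichotomy you propose in the ``only if'' direction --- either $t$ starts with $u_n$, or $u_nz=t\cdot(t^{-1}u_nz)$ with $t^{-1}u_n$ reduced as written --- is false (it omits the typical case of partial cancellation), and the final claim that ``the hypothesis guarantees precisely the applicable case'' is incorrect. Also $t$ need not be a prefix of $u_nz$ in any sense: take $t=b^{-1}$ and $y=bu_nz$.

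The ``if'' direction is likewise not ``automatic from the definition'', and it is not true that the hypothesis enters only in ``only if''. Writing $y=t^{-1}u_ny'$ with no cancellation at the junction only tells you that $y'$ does not begin with the inverse of the last letter of $t^{-1}u_n$; to conclude that $ty=u_ny'$ starts with $u_n$ you need that last letter to be $b^{-1}$, i.e.\ you need the hypothesis. Indeed the implication fails without it: for $t=u_na$ one has $t^{-1}u_n=a^{-1}$, and $y=a^{-1}bbb\cdots$ starts with $t^{-1}u_n$, while $ty=u_nbbb\cdots=a^nba^{-n}bb\cdots\notin D_n$. Once the last-letter observation is in place (as in the paper), both directions are immediate: write $y=t^{-1}u_ny'$, respectively $ty=u_ny'$, with $y'$ not starting with $b$, multiply by $t$, respectively $t^{-1}$, and note that no cancellation occurs at the junction because both $u_n$ and the reduced form of $t^{-1}u_n$ end with $b^{-1}$; no letter-by-letter tracking of cancellation fronts is needed.
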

\begin{proof} Since $t$ does not start with $u_n$, the element $t^{-1}u_n$ ends with $b^{-1}$.

Suppose that $y$ starts with $t^{-1}u_n$. Then $y=t^{-1}u_ny'$ for some $y'\in Y$ not starting with $b$. We have $ty=u_ny'$. Since $y'$ does not start with $b$, the point $u_ny'$ starts with $u_n$.  Thus $ty\in D_n$. This proves the ``if'' part.

Now suppose that $ty\in D_n$. Then $ty=u_ny'$ for some $y'\in Y$ not starting with $b$. We have $y=t^{-1}u_ny'$. Since $t^{-1}u_n$ ends with $b^{-1}$ and $y'$ does not start with $b$, $t^{-1}u_ny'$ starts with $t^{-1}u_n$. This proves the ``only if'' part.
\end{proof}

\begin{lemma} \label{L-free7}
Let $t\in \Gamma$ and $y\in Y$. Assume that $t$ starts with $u_n$.
Then $ty\in D_n$ if and only if $y$ does not start with $t^{-1}u_nb$.
\end{lemma}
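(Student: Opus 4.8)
The plan is to follow the pattern of the proof of Lemma~\ref{L-free6}. Since $t$ starts with $u_n$, I would first write $t=u_nw$ in reduced form, so that either $w=e_\Gamma$ or the first letter of $w$ is not $b$ (recall $u_n$ ends with the letter $b^{-1}$). This has two consequences I will use repeatedly: $u_n^{-1}t=w$, and $t^{-1}u_nb=w^{-1}b$, where $w^{-1}b$ is a reduced word ending with the letter $b$ (its last letter is $b$, and the letter before it, if any, is the inverse of the first letter of $w$, hence not $b^{-1}$).

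The key reformulation is the following, valid for every $\xi\in Y$: by the very definition of $D_n=V_{u_n}$, one has $\xi\in D_n$ precisely when $\xi=u_n\xi'$ for some $\xi'\in Y$ whose first letter is not $b$; and since such a $\xi'$ is forced by the group law to equal $u_n^{-1}\xi$, this says exactly that the first letter of $u_n^{-1}\xi$ is not $b$. Applying this with $\xi=ty$ and using $u_n^{-1}(ty)=(u_n^{-1}t)y=wy$, the statement to be proved becomes: the first letter of $wy$ is $b$ if and only if $y$ starts with $w^{-1}b$.

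This last equivalence is a short cancellation computation. If $y$ starts with $w^{-1}b$, write $y=w^{-1}bz$ with $z\in Y$ not starting with $b^{-1}$; then $wy=bz$, which is reduced, so $wy$ begins with $b$. Conversely, if $wy$ begins with $b$, write $wy=bz$ with $z\in Y$ not starting with $b^{-1}$; then $y=w^{-1}(wy)=w^{-1}bz$, and this is reduced because $w^{-1}b$ is reduced and ends with $b$ while $z$ does not start with $b^{-1}$, so $y$ starts with $w^{-1}b$. Taking contrapositives and combining with the previous paragraph gives $ty\in D_n$ if and only if $y$ does not start with $w^{-1}b=t^{-1}u_nb$.

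I do not expect a genuine obstacle: the argument is entirely parallel to that of Lemma~\ref{L-free6}, with the roles of ``starts with'' and ``does not start with'' interchanged because here $t$ absorbs the copy of $u_n$. The one point that deserves care is the reformulation of membership in $D_n$, where one must observe not only that the witness $\xi'$ is unique but also that, since $u_n$ ends with $b^{-1}$, no cancellation occurs in the product $u_n\cdot(u_n^{-1}\xi)$ exactly when $u_n^{-1}\xi$ does not begin with $b$, so that in that case the reduced form of $\xi$ genuinely has $u_n$ as a prefix.
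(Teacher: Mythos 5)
Your proof is correct and follows essentially the same route as the paper's: write $t=u_nw$ with $w$ not starting with $b$, observe that $ty\in D_n$ exactly when $wy$ does not start with $b$, and check by a cancellation argument (using that $w^{-1}b$ is reduced and ends with $b$) that $wy$ starts with $b$ if and only if $y$ starts with $w^{-1}b=t^{-1}u_nb$. The paper states this in one line; you have simply supplied the details.
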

\begin{proof} We have $t=u_ns$ for some $s\in \Gamma$ not starting with $b$. Then $ty\not\in D_n$ if and only if $y$ starts with $(b^{-1}s)^{-1}=t^{-1}u_nb$.
\end{proof}

\begin{lemma} \label{L-free2}
Let $t\in \Gamma$ be nontrivial such that $tD_{n_2}\cap D_{n_1}\neq \emptyset$. Then one of the following $3$ situations must hold:
\begin{enumerate}
\item $t$ ends with $u_{n_2}^{-1}=ba^{n_2}b^{-1}a^{-n_2}$,
\item $t$ starts with $u_{n_1}=a^{n_1}ba^{-n_1}b^{-1}$,
\item $n_2\neq n_1$ and $t=u_{n_1}u_{n_2}^{-1}=a^{n_1}ba^{n_2-n_1}b^{-1}a^{-n_2}$.
\end{enumerate}
\end{lemma}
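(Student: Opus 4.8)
The plan is to analyze the condition $tD_{n_2}\cap D_{n_1}\neq\emptyset$ by picking a point $y\in D_{n_2}$ with $ty\in D_{n_1}$ and tracking how the prefix $u_{n_1}$ of $ty$ relates to the prefix $u_{n_2}$ of $y$, using the two ``combinatorial'' lemmas \ref{L-free6} and \ref{L-free7} as the workhorses. The natural case split is on whether $t$ starts with $u_{n_2}^{-1}$, i.e. whether $t^{-1}$ starts with $u_{n_2}$; equivalently whether $t$ ends with $u_{n_2}^{-1}$ (situation (1)). If $t$ ends with $u_{n_2}^{-1}$ we are done immediately, so assume from now on that $t$ does not end with $u_{n_2}^{-1}$. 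Then I would like to say that for $y\in D_{n_2}$, the word $ty$ is obtained with only ``bounded'' cancellation against the prefix $u_{n_2}$ of $y$, so that $ty$ starts with a fixed word depending only on $t$ and $n_2$; the membership $ty\in D_{n_1}$ then forces that fixed word to begin with $u_{n_1}$ (after treating the short-prefix ambiguity), which is what situations (2) and (3) record.

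Concretely, here is the order I would carry things out. First, write $t^{-1}=w$ and note $D_{n_2}=V_{u_{n_2}}$, so $y=u_{n_2}y'$ with $y'$ not starting with $b$ (equivalently $y$ not starting with $u_{n_2}a$... —more precisely, $y'$ has first letter $\neq b$). Since $t$ does not end with $u_{n_2}^{-1}$, $w=t^{-1}$ does not start with $u_{n_2}$; apply Lemma~\ref{L-free6} with the roles adjusted, or rather work directly: $ty = t u_{n_2} y'$. Either $tu_{n_2}$ reduces without completely cancelling $u_{n_2}$, in which case $ty$ starts with the reduced word $r:=tu_{n_2}$ and membership $ty\in D_{n_1}=V_{u_{n_1}}$ combined with Lemma~\ref{L-free6}/\ref{L-free7} (according to whether $r$ starts with $u_{n_1}$ or not) pins down $t$; or $u_{n_2}$ is entirely cancelled, i.e. $t$ ends with $u_{n_2}^{-1}$ times possibly more, but we assumed $t$ does not end with $u_{n_2}^{-1}$, so the only remaining sub-case is that $t$ ends with a proper nonempty prefix of $u_{n_2}^{-1}$ — here one examines the finitely many proper prefixes of $u_{n_2}^{-1}=ba^{n_2}b^{-1}a^{-n_2}$ and checks, using that $y'$ does not start with $b$, which ones can actually occur and lead to $ty\in D_{n_1}$. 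The internal structure of $u_n=a^nba^{-n}b^{-1}$ (it starts with $a$, its second letter is also $a$ when $n\geq 2$, it contains the unique ``syllable pattern'' $a^nba^{-n}b^{-1}$) is what makes the bookkeeping finite and forces case (3) to have the exact form $t=u_{n_1}u_{n_2}^{-1}$ with $n_1\neq n_2$ (if $n_1=n_2$ this word is trivial, contradicting nontriviality of $t$, which is why the hypothesis $n_2\neq n_1$ appears).

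The main obstacle I expect is the prefix/suffix cancellation analysis: the words $u_n$ all begin with the letter $a$, so when $t$ ends in a power of $a$ there can be substantial cancellation between $t$ and $u_{n_2}$, and one has to argue carefully that after this cancellation the surviving word still ``remembers'' enough of $u_{n_2}$ (because of the $b$ and $b^{-1}$ letters inside $u_{n_2}$, which cannot cancel against a power of $a$) that the only way to land in $V_{u_{n_1}}$ is to be in one of the three listed situations. Handling the boundary cases — where the overlap between $t^{-1}$ and $u_{n_1}$, or between $t$ and $u_{n_2}^{-1}$, is a proper prefix rather than all of it — is where Lemmas~\ref{L-free6} and \ref{L-free7} must be invoked with the right choice of which one applies, and this is the delicate part. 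Once the word-combinatorics is set up correctly, the conclusion is a finite check on the syllable structure of $u_{n_1}u_{n_2}^{-1}$.
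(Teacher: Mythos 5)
Your overall strategy is the right one, and in fact it mirrors the paper's proof (the paper works with the prefix $t^{-1}u_{n_1}$ of $y$ via Lemma~\ref{L-free6}, you work with the prefix $tu_{n_2}$ of $ty$; these are the same argument read backwards). But as written there is a genuine gap: the decisive combinatorial step is only promised, never performed. After you reduce to the case where $u_{n_2}$ is not entirely cancelled in $tu_{n_2}$, you say that $ty\in D_{n_1}$ ``pins down $t$'' and later that the boundary case is ``the delicate part'' to be settled by ``a finite check on the syllable structure.'' That boundary case is precisely the content of the lemma: one must show that if $t$ neither starts with $u_{n_1}$ nor ends with $u_{n_2}^{-1}$, then comparing the two prefixes $u_{n_1}$ and (the reduced word) $tu_{n_2}$ of $ty$ forces $tu_{n_2}=u_{n_1}$, i.e.\ case (3). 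The paper does this by observing that $t^{-1}u_{n_1}$ (equivalently, the reduced $tu_{n_2}$ in your setup) ends with $b^{-1}$, that the only occurrence of $b^{-1}$ in $u_{n_2}$ (resp.\ $u_{n_1}$) is the terminal letter, and then handling the case where $u_{n_2}$ is a proper prefix of $t^{-1}u_{n_1}$ using that $t^{-1}$ does not start with $u_{n_2}$. None of this appears in your write-up; without it the proof is a plan, not a proof.

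There is also a bookkeeping error in the one sub-case reduction you do spell out. Partial cancellation in $tu_{n_2}$ means $t$ ends with the \emph{inverse of a proper prefix of $u_{n_2}$} (a suffix of $u_{n_2}^{-1}$, e.g.\ $a^{-k}$ or $a^{j}b^{-1}a^{-n_2}$), not with ``a proper nonempty prefix of $u_{n_2}^{-1}=ba^{n_2}b^{-1}a^{-n_2}$''; moreover that situation is already contained in your first branch, so presenting it as ``the only remaining sub-case'' after excluding (1) conflates the two branches. These slips are repairable, but together with the deferred case analysis they mean the argument, as submitted, does not establish the trichotomy (1)--(3). A correct completion along your lines would: note that the surviving suffix of $u_{n_2}$ ends in $b^{-1}$, so $ty$ starts with the reduced $r=tu_{n_2}$ (using that $y'$ does not start with $b$); compare $r$ with $u_{n_1}$ as prefixes of $ty$; if $r$ is a prefix of $u_{n_1}$, the terminal-$b^{-1}$ observation gives $r=u_{n_1}$, hence (3) with $n_1\neq n_2$ by nontriviality of $t$; if $u_{n_1}$ is a proper prefix of $r$, one must still rule out the possibility that $u_{n_1}$ reaches into the surviving part of $u_{n_2}$ without $t$ starting with $u_{n_1}$ --- this is exactly the step the paper carries out and you leave open.
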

\begin{proof} Assume that (1) and (2) do not hold. Take $y\in D_{n_2}$ with $ty\in D_{n_1}$. Then $y$ starts with $u_{n_2}$.
Since $t$ does not start with $u_{n_1}$, we know that $t^{-1}u_{n_1}$ ends with $b^{-1}$ and by Lemma~\ref{L-free6} the element $y$ also starts with $t^{-1}u_{n_1}$.
Then either $u_{n_2}$ starts with $t^{-1}u_{n_1}$ or $t^{-1}u_{n_1}$ starts with $u_{n_2}$. Since $t^{-1}u_{n_1}$ ends with $b^{-1}$, if  $u_{n_2}$ starts with $t^{-1}u_{n_1}$, then we must have $u_{n_2}=t^{-1}u_{n_1}$, i.e. (3) holds. If $t^{-1}u_{n_1}$ starts with $u_{n_2}$, since $t^{-1}$ does not start with $u_{n_2}$, then it is easy to see that we still have $t^{-1}u_{n_1}=u_{n_2}$.
\end{proof}

\begin{lemma} \label{L-free5}
Let $s_1, s_2\in \Gamma$ and $z, z'\in Z$ such that $s_1z, s_2z\in C_{n}$, and  $s_2z'\in C_{m}$. Then $m=n$ if and only if $s_1z'\in C_n$.
\end{lemma}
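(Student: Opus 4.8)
The plan is to reduce the statement to elementary coset arithmetic in the finite quotients $\Gamma/\Gamma_n$, exploiting only that $\pi_n\colon Z\to\Gamma/\Gamma_n$ is a $\Gamma$-equivariant group homomorphism (the $\Gamma$-action on both sides being left translation) and that $C_n=\pi_n^{-1}(\gamma_n\Gamma_n)$ is the full preimage of a single coset. The first thing I would record is the tautology that, for $s\in\Gamma$ and $w\in Z$, one has $sw\in C_n$ if and only if $s\,\pi_n(w)=\gamma_n\Gamma_n$ in $\Gamma/\Gamma_n$. The second, and the only step with any content, is a \emph{transfer principle}: if $s_1\Gamma_n=s_2\Gamma_n$, then $s_1w\in C_n$ if and only if $s_2w\in C_n$, for \emph{every} $w\in Z$. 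To see this, write $\pi_n(w)=g\Gamma_n$; from $s_1\Gamma_n=s_2\Gamma_n$ we get $s_2^{-1}s_1\in\Gamma_n$, hence $g^{-1}(s_2^{-1}s_1)g\in\Gamma_n$ by normality of $\Gamma_n$, and this says precisely that $s_1\pi_n(w)=s_2\pi_n(w)$.

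Granting these, the hypothesis $s_1z,s_2z\in C_n$ immediately yields $s_1\Gamma_n=s_2\Gamma_n$: writing $\pi_n(z)=g\Gamma_n$, both $s_1 g\Gamma_n$ and $s_2 g\Gamma_n$ equal $\gamma_n\Gamma_n$, so $s_2^{-1}s_1\in g\Gamma_n g^{-1}=\Gamma_n$. The ``only if'' direction of the lemma is then clear: if $m=n$ then $s_2z'\in C_n$, and the transfer principle applied with $w=z'$ gives $s_1z'\in C_n$. For the ``if'' direction, assume $s_1z'\in C_n$; the transfer principle gives $s_2z'\in C_n$ as well, while $s_2z'\in C_m$ by hypothesis, so it remains to note that $C_2,C_3,\dots$ are pairwise disjoint, which forces $m=n$. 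Disjointness is quick: if $k<\ell$ and $w\in C_k\cap C_\ell$, then pushing $\pi_\ell(w)=\gamma_\ell\Gamma_\ell$ forward along the natural map $\Gamma/\Gamma_\ell\to\Gamma/\Gamma_k$ gives $\pi_k(w)=\gamma_\ell\Gamma_k$, which equals $\Gamma_k$ because $\gamma_\ell\in\Gamma_{\ell-1}\subseteq\Gamma_k$; but $\pi_k(w)=\gamma_k\Gamma_k\neq\Gamma_k$ since $\gamma_k\notin\Gamma_k$, a contradiction.

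I do not anticipate a genuine obstacle: the entire argument is bookkeeping with cosets. The only place to be careful is the transfer principle, where one must invoke normality of $\Gamma_n$ to move the conjugate $g^{-1}(s_2^{-1}s_1)g$ back inside $\Gamma_n$; and one should remember that pairwise disjointness of the $C_n$—not explicitly singled out among the properties in the construction, though an easy consequence of it—is exactly what closes the ``if'' direction.
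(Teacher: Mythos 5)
Your proof is correct and follows essentially the same route as the paper's: both deduce from $s_1z,s_2z\in C_n$ that $s_1s_2^{-1}\in\Gamma_n$ (via the coset computation and normality), use that translation by an element of $\Gamma_n$ preserves $C_n$, and close the ``if'' direction with pairwise disjointness of the $C_k$. The only difference is cosmetic: you spell out the disjointness argument, which the paper takes as already established from the construction.
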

\begin{proof} Set $t=s_1s_2^{-1}$. Then $tC_{n}\cap C_{n}\neq \emptyset$, and  hence
$t\gamma_{n}\Gamma_{n}=\gamma_{n}\Gamma_{n}$. It follows that
$t\in \Gamma_{n}$.

Assume that $m=n$. Then $s_1z'=t(s_2z')\in tC_n=C_n$. This proves the ``only if'' part.

Conversely, assume that $s_1z'\in C_n$. Then $s_2z'=t^{-1}(s_1z')\in t^{-1}C_n=C_n$, and hence $m=n$. This proves the ``if'' part.
\end{proof}

Note that for each $n\in \Nb$, $\Gamma_n$ is torsion-free. Since $\bigcap_{m\in \Nb}\Gamma_m=\{e_\Gamma\}$, for each $n\in \Nb$, when $m$ is large enough, $\Gamma_n/\Gamma_{n+m}$ has nontrivial elements with order strictly bigger than $2$. Thus replacing $\{\Gamma_n\}$ by a suitable subsequence, we may choose $\gamma_n\in \Gamma_{n-1}\setminus \Gamma_n$ such that $\gamma_n^2\not\in \Gamma_n$ for every $n\ge 2$, which we shall assume from now on.

\begin{lemma} \label{L-free1}
Let $s_1, s_2\in \Gamma$ and $z, z'\in Z$ such that $s_1z\in C_{n_1}$ and $s_2z\in C_{n_2}$ with $n_1<n_2$, and $s_1z'\in C_{m_1}$ and $s_2z'\in C_{m_2}$. Then $s_1s_2^{-1}\in \gamma_{n_1}\Gamma_{n_1}$, and  one of the following two situations must hold:
\begin{enumerate}
\item $m_1=n_1<m_2$,
\item $m_1=m_2<n_1$.
\end{enumerate}
\end{lemma}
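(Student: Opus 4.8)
The plan is to rewrite each condition of the form ``$sw\in C_k$'' as the coset identity $s\pi_k(w)=\gamma_k\Gamma_k$ in the finite group $\Gamma/\Gamma_k$, and then to manipulate such identities by pushing them forward along the quotient homomorphisms $\Gamma/\Gamma_b\to\Gamma/\Gamma_a$ (valid for $a<b$, since then $\Gamma_b\subseteq\Gamma_a$, and compatible with the $\Gamma$-actions and with the $\pi_n$). The only facts used are that each $\Gamma_k$ is normal in $\Gamma$, that $\gamma_k\in\Gamma_{k-1}\setminus\Gamma_k$, that $\gamma_k^2\notin\Gamma_k$ (the choice arranged just above), and that the sets $C_k$ are pairwise disjoint. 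First I would establish $t:=s_1s_2^{-1}\in\gamma_{n_1}\Gamma_{n_1}$: since $n_2-1\ge n_1$ we have $\gamma_{n_2}\in\Gamma_{n_2-1}\subseteq\Gamma_{n_1}$, so pushing $s_2\pi_{n_2}(z)=\gamma_{n_2}\Gamma_{n_2}$ forward to $\Gamma/\Gamma_{n_1}$ gives $s_2\pi_{n_1}(z)=\Gamma_{n_1}$; comparing with $s_1\pi_{n_1}(z)=\gamma_{n_1}\Gamma_{n_1}$ yields $t\Gamma_{n_1}=\gamma_{n_1}\Gamma_{n_1}$, hence $t\in\Gamma_{n_1-1}$ and $t\notin\Gamma_{n_1}$. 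Writing $w:=s_2z'$ and using $s_1z'=tw$, the remaining hypotheses become $\pi_{m_2}(w)=\gamma_{m_2}\Gamma_{m_2}$ and $t\pi_{m_1}(w)=\gamma_{m_1}\Gamma_{m_1}$.

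Next I would split according to the position of $m_1$ relative to $n_1$. If $m_1<n_1$, then $t\in\Gamma_{n_1-1}\subseteq\Gamma_{m_1}$, so by normality $t$ acts trivially on $\Gamma/\Gamma_{m_1}$; hence $\pi_{m_1}(w)=\gamma_{m_1}\Gamma_{m_1}$, i.e.\ $w\in C_{m_1}$, and since the $C_k$ are pairwise disjoint and $w\in C_{m_2}$ we get $m_2=m_1<n_1$, which is situation~(2). If $m_1=n_1$, then $t\pi_{n_1}(w)=\gamma_{n_1}\Gamma_{n_1}=t\Gamma_{n_1}$, so $\pi_{n_1}(w)=\Gamma_{n_1}$ and thus $w\notin C_{n_1}$, forcing $m_2\ne n_1$; if moreover $m_2<n_1$, pushing $\pi_{n_1}(w)=\Gamma_{n_1}$ forward to $\Gamma/\Gamma_{m_2}$ gives $\gamma_{m_2}\Gamma_{m_2}=\Gamma_{m_2}$, contradicting $\gamma_{m_2}\notin\Gamma_{m_2}$, so $m_2>n_1=m_1$, which is situation~(1).

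It remains to rule out $m_1>n_1$. In that case $\gamma_{m_1}\in\Gamma_{m_1-1}\subseteq\Gamma_{n_1}$, so pushing $t\pi_{m_1}(w)=\gamma_{m_1}\Gamma_{m_1}$ forward to $\Gamma/\Gamma_{n_1}$ gives $\pi_{n_1}(w)=t^{-1}\Gamma_{n_1}$. I would then split on $m_2$. If $m_2>n_1$, then likewise $\gamma_{m_2}\in\Gamma_{n_1}$ and pushing $\pi_{m_2}(w)=\gamma_{m_2}\Gamma_{m_2}$ forward gives $\pi_{n_1}(w)=\Gamma_{n_1}$, hence $t\in\Gamma_{n_1}$, a contradiction. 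If $m_2<n_1$, then $t\in\Gamma_{n_1-1}\subseteq\Gamma_{m_2}$, so pushing $\pi_{n_1}(w)=t^{-1}\Gamma_{n_1}$ forward to $\Gamma/\Gamma_{m_2}$ gives $\gamma_{m_2}\Gamma_{m_2}=\Gamma_{m_2}$, again a contradiction. If $m_2=n_1$, then $t^{-1}\Gamma_{n_1}=\pi_{n_1}(w)=\gamma_{m_2}\Gamma_{m_2}=\gamma_{n_1}\Gamma_{n_1}=t\Gamma_{n_1}$, so $t^2\in\Gamma_{n_1}$ and therefore $\gamma_{n_1}^2\Gamma_{n_1}=(t\Gamma_{n_1})^2=\Gamma_{n_1}$, contradicting $\gamma_{n_1}^2\notin\Gamma_{n_1}$. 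Hence $m_1>n_1$ is impossible, and the dichotomy (1)--(2) follows.

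I expect the excluded case $m_1>n_1$ to be the main obstacle: it is the only place where the auxiliary hypothesis $\gamma_k^2\notin\Gamma_k$ is genuinely used (which is presumably why it was arranged immediately before the lemma), and it requires keeping careful track of which of the three quotients $\Gamma/\Gamma_{m_1}$, $\Gamma/\Gamma_{n_1}$, $\Gamma/\Gamma_{m_2}$ one should work in, according to the ordering of $m_1$, $n_1$, $m_2$. Everything else reduces to one-line computations with cosets together with the nesting $\Gamma_{k+1}\subseteq\Gamma_k$ and the defining properties of the $\gamma_k$.
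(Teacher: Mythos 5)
Your proof is correct and uses essentially the same ingredients as the paper's: the coset identity $t=s_1s_2^{-1}\in\gamma_{n_1}\Gamma_{n_1}$, the nesting of the $\Gamma_k$ and disjointness of the cosets $\gamma_k\Gamma_k$, and the condition $\gamma_{n_1}^2\notin\Gamma_{n_1}$ to kill the one genuinely bad configuration. The only difference is bookkeeping: you split on the position of $m_1$ relative to $n_1$ (with subcases on $m_2$), while the paper splits on $m_1$ versus $m_2$ and in its last case identifies $n_1=m_2$ from $t\in\gamma_{n_1}\Gamma_{n_1}\cap\gamma_{m_2}^{-1}\Gamma_{m_2}$ before invoking $\gamma_{n_1}^2\notin\Gamma_{n_1}$; the arguments are otherwise the same.
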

\begin{proof} Set $t=s_1s_2^{-1}$. Then $tC_{n_2}\cap C_{n_1}\neq \emptyset$, and  hence
$$t\Gamma_{n_1}=t\gamma_{n_2}\Gamma_{n_1}=\gamma_{n_1}\Gamma_{n_1}.$$
That is, $t\in \gamma_{n_1}\Gamma_{n_1}$. We consider three cases.

Consider first the case $m_1<m_2$.  Similarly we have $t\in \gamma_{m_1}\Gamma_{m_1}$. Then $t\in C_{n_1}\cap C_{m_1}$, and hence $m_1=n_1$.

Next we consider the case $m_1=m_2$. Then $tC_{m_1}\cap C_{m_1}\neq \emptyset$, and hence $t\in \Gamma_{m_1}$, which implies $m_1<n_1$.

Finally we consider the case $m_1>m_2$.  Similarly we have $t^{-1}\in \gamma_{m_2}\Gamma_{m_2}$. Then $t\in \gamma_{n_1}\Gamma_{n_1}\cap \gamma_{m_2}^{-1}\Gamma_{m_2}$, and hence $n_1=m_2$. It follows that $\gamma_{n_1}\Gamma_{n_1}=\gamma_{n_1}^{-1}\Gamma_{n_1}$, and hence $\gamma_{n_1}^2\in \Gamma_{n_1}$, which is a contradiction to our assumption. Therefore the case $m_1>m_2$ does not happen.
\end{proof}

\begin{lemma} \label{L-free3}
Let $s_1, s_2\in \Gamma$ be distinct such that $s_1^{-1}X_+\cap s_2^{-1}X_+\neq \emptyset$.
 Set $t=s_1s_2^{-1}$. Then at least one of the following 10 situations must hold:
\begin{enumerate}
\item Type (A1) for $(s_1, s_2)$: $t=u_mvu_{n_2}^{-1}$ for some $2\le m<n_1<n_2$ determined by $t$ and  some nontrivial $v\in \Gamma$ not starting with $b$ and not ending with $b^{-1}$; for any $x''=(y'', z'')\in X$ with $s_2x''\in D_{k_2}\times C_{k_2}$, we have $s_1x''\in X_+$ if and only if either $k_2=m$, in which case $s_1x''\in D_{m}\times C_{m}$, or $k_2=n_2$ and
    $s_2y''$ starts with $t^{-1}u_{n_1}$, in which case $s_1x''\in D_{n_1}\times C_{n_1}$.
\item Type (A2) for $(s_1, s_2)$: $t=u_{n_1}vu_m^{-1}$ for some $2\le m<n_1$ determined by $t$ and some nontrivial $v\in \Gamma$ not starting with $b$ and not ending with $b^{-1}$; for any $x''=(y'', z'')\in X$ with $s_2x''\in D_{k_2}\times C_{k_2}$, we have $s_1x''\in X_+$ if and only if either $k_2>n_1$, in which case $s_1x''\in D_{n_1}\times C_{n_1}$, or $k_2=m$ and $s_2y''$ starts with $t^{-1}u_m$, in which case $s_1x''\in D_{m}\times C_{m}$.
\item  Type (B1) for $(s_1, s_2)$: $t=u_{n_1}vu_m^{-1}$ for some $2\le n_1<m$ determined by $t$ and some nontrivial $v\in \Gamma$ not starting with $b$ and not ending with $b^{-1}$; for any $x''=(y'', z'')\in X$ with $s_2x''\in D_{k_2}\times C_{k_2}$, we have $s_1x''\in X_+$ if and only if $s_1x''\in D_{n_1}\times C_{n_1}$ if and only if either $m\neq k_2>n_1$,  or $k_2=m$ and $s_2y''$ does not start with $t^{-1}u_{n_1}b$.
\item Type (B2) for $(s_1, s_2)$: $t=u_{n_1}v$  for some $n_1\ge 2$ determined by $t$ and some $v\in \Gamma$ not starting with $b$ and not ending with $u_m^{-1}$ for any $m>n_1$; for any $x''=(y'', z'')\in X$ with $s_2x''\in D_{k_2}\times C_{k_2}$, we have $s_1x''\in X_+$ if and only if  $k_2>n_1$ and $t\neq u_{n_1}a^{-k_2}$, in which case $s_1x''\in D_{n_1}\times C_{n_1}$.
\item Type (B3) for $(s_1, s_2)$: $t=u_{n_1}u_{n_2}^{-1}$ for some $2\le n_1<n_2$ determined by $t$; for any $x''=(y'', z'')\in X$ with $s_2x''\in D_{k_2}\times C_{k_2}$, we have $s_1x''\in X_+$ if and only if $k_2=n_2$, in which case $s_1x''\in D_{n_1}\times C_{n_1}$.
\item Type (B4) for $(s_1, s_2)$: $t=vu_{n_2}^{-1}$ for some $2\le n_1<n_2$ determined by $t$ and some $v\in \Gamma$ not ending with $b^{-1}$ and not starting with $u_{n_1}$; for any $x''=(y'', z'')\in X$ with $s_2x''\in D_{k_2}\times C_{k_2}$, we have $s_1x''\in X_+$ if and only if $k_2=n_2$ and $s_2y''$ starts with $t^{-1}u_{n_1}$, in which case $s_1x''\in D_{n_1}\times C_{n_1}$.
\item Type (C1) for $(s_1, s_2)$: $t=u_{n_1}vu_{n_2}^{-1}$ for some distinct $n_1, n_2\ge 2$ determined by $t$ and  some nontrivial $v\in \Gamma$ not starting with $b$ and not ending with $b^{-1}$; for any $x''=(y'', z'')\in X$ with $s_2x''\in D_{k_2}\times C_{k_2}$, we have $s_1x''\in X_+$ if and only if either $k_2=n_1$, in which case $s_1x''\in D_{n_1}\times C_{n_1}$, or $k_2=n_2$ and
    $s_2y''$ starts with $t^{-1}u_{n_2}$, in which case $s_1x''\in D_{n_2}\times C_{n_2}$.
\item  Type (C2) for $(s_1, s_2)$: $t=u_mvu_m^{-1}$ for some $m\ge 2$ determined by $t$ and some nontrivial $v\in \Gamma$ not starting with $b$ and not ending with $b^{-1}$; for any $x''=(y'', z'')\in X$ with $s_2x''\in D_{k_2}\times C_{k_2}$, we have $s_1x''\in X_+$ if and only if $k_2=m$ and $s_2y''$ does not start with $t^{-1}u_mb$, in which case $s_1x''\in D_m\times C_m$.
\item Type (C3) for $(s_1, s_2)$: $t=vu_m^{-1}$ for some $m\ge 2$ determined by $t$ and some $v\in \Gamma$ not starting with $u_m$ and not ending with $b^{-1}$; for any $x''=(y'', z'')\in X$ with $s_2x''\in D_{k_2}\times C_{k_2}$, we have  $s_1x''\in X_+$ if and only if $k_2=m$ and $s_2y''$ starts with $t^{-1}u_m$, in which case $s_1x''\in D_m\times C_m$.
\item The pair $(s_2, s_1)$ has one of the above types.
\end{enumerate}
\end{lemma}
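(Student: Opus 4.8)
The plan is to classify the reduced word $t=s_1s_2^{-1}$ by a case analysis whose backbone is Lemma~\ref{L-free2} for the $Y$-coordinate and the coset structure of the sets $C_n=\pi_n^{-1}(\gamma_n\Gamma_n)$ for the $Z$-coordinate, the two being coupled by the fact that $X_+=\bigcup_{n\ge 2}(D_n\times C_n)$ is a union of \emph{diagonal} rectangles: membership $sx\in X_+$ forces the $Y$-index and the $Z$-index of $sx$ to be the same $n$.

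First I would fix a witness point $x=(y,z)\in s_1^{-1}X_+\cap s_2^{-1}X_+$, so that $s_1x\in D_{n_1}\times C_{n_1}$ and $s_2x\in D_{n_2}\times C_{n_2}$ for some $n_1,n_2\ge 2$. Then $tD_{n_2}\cap D_{n_1}\ne\emptyset$ and $tC_{n_2}\cap C_{n_1}\ne\emptyset$. From the second relation, using $\gamma_k\in\Gamma_{k-1}$, one reads off $t\in\gamma_{n_1}\Gamma_{n_1}$ when $n_1<n_2$, $t^{-1}\in\gamma_{n_2}\Gamma_{n_2}$ when $n_1>n_2$, and $t\in\Gamma_{n_1}$ when $n_1=n_2$; the index singled out this way is unique ($t\in\gamma_n\Gamma_n\cap\gamma_{n'}\Gamma_{n'}$ with $n<n'$ would force $t\in\Gamma_n$ since $\gamma_{n'}\in\Gamma_{n'-1}\subseteq\Gamma_n$, contradicting $\gamma_n\notin\Gamma_n$), and Lemmas~\ref{L-free1} and \ref{L-free5} pin down the $Z$-indices of any further witness, so the quantities named $n_1,n_2,m$ in the ten types are genuine functions of $t$. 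This is exactly where the standing hypothesis $\gamma_n^2\notin\Gamma_n$ enters, since it is what rules out the exceptional case $m_1>m_2$ in Lemma~\ref{L-free1}. From the first relation, Lemma~\ref{L-free2} now splits the problem into three cases: $t$ ends with $u_{n_2}^{-1}$; or $t$ starts with $u_{n_1}$; or $n_1\ne n_2$ and $t=u_{n_1}u_{n_2}^{-1}$.

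The third case is the quickest: after replacing $(s_1,s_2)$ by $(s_2,s_1)$ if needed---which is precisely alternative (10)---one may assume $n_1<n_2$, and a short computation shows $u_{n_1}u_{n_2}^{-1}=a^{n_1}ba^{n_2-n_1}b^{-1}a^{-n_2}$ neither starts with any $u_i$ nor ends with any $u_j^{-1}$, so $(s_1,s_2)$ has Type (B3). In the two remaining cases I would write $t=vu_{n_2}^{-1}$ with $v$ not ending in $b^{-1}$, respectively $t=u_{n_1}v$ with $v$ not starting with $b$, and subdivide according to the structure of the opposite end of $t$---whether it begins/ends with a block $a^{\pm k}$, with $b^{\pm1}$, or with some $u_m^{\pm1}$---and according to where the coset index lies relative to those indices; the leaves of this subdivision are exactly Types (A1)--(C3), their mirror images being absorbed by alternative (10). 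In each leaf the shape of $t$ is then known, and the stated criterion for $s_1x''\in X_+$ follows by running the forward implications: for $x''=(y'',z'')$ with $s_2x''\in D_{k_2}\times C_{k_2}$, Lemmas~\ref{L-free6} and \ref{L-free7}, applied to the action of $t$ on $D_{k_2}$ and using that $s_2y''$ starts with $u_{k_2}$, say for which $k_2$ and into which $D_n$ the point $s_1y''=ts_2y''$ can fall, while the coset computation above together with Lemmas~\ref{L-free1} and \ref{L-free5} says for which $k_2$ and into which $C_n$ the point $s_1z''=ts_2z''$ falls; since $s_1x''\in X_+$ demands both, with the \emph{same} $n$, intersecting the two conditions yields precisely the asserted dichotomy.

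The main obstacle is the sheer volume of bookkeeping rather than any one hard idea. Each leaf forces one to track reduced-word cancellations with care---deciding, for the known shape of $t$, whether $t$, $t^{-1}u_n$, or $t^{-1}u_nb$ starts or ends with $b$, $b^{-1}$, or some $u_m^{\pm1}$, so that Lemmas~\ref{L-free6} and \ref{L-free7} can be invoked in the correct form---while simultaneously doing coset arithmetic modulo the $\Gamma_n$, and one must check in every sub-case that the index forced by the $Y$-coordinate coincides with the one forced by the $Z$-coordinate, failing which $s_1x''$ would not lie in $X_+$ at all.
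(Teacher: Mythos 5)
Your toolkit and endgame are the same as the paper's: a witness point plus Lemma~\ref{L-free2} and the coset arithmetic of the $C_n$, with each membership criterion obtained by intersecting the $Y$-condition coming from Lemmas~\ref{L-free6} and \ref{L-free7} with the $Z$-condition coming from the coset computation (and with $\gamma_n^2\notin\Gamma_n$ killing the case where $k_2$ equals the coset index). But your organization is genuinely different from the paper's. The paper's primary split is by which index configurations are realized by points of $s_1^{-1}X_+\cap s_2^{-1}X_+$: a witness with distinct indices together with one with equal indices (types A), only distinct-index witnesses (types B), only equal-index witnesses (types C); the reduced form of $t$ is then deduced inside each case, and the side conditions in the types (e.g.\ ``$v$ does not end with $u_m^{-1}$ for any $m>n_1$'' in (B2)) drop out of the absence of certain witnesses. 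You instead propose to classify directly by the reduced form of $t$ together with its coset class. Since, given $s_1=ts_2$, each type's defining condition depends only on $t$, this alternative organization can in principle be made to work.

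The gap is that the decisive step --- ``the leaves of this subdivision are exactly Types (A1)--(C3)'' and in each leaf ``the stated criterion follows by running the forward implications'' --- is asserted rather than carried out, and this is where essentially all of the lemma's content lies; moreover the dictionary between (word shape, coset index) and the ten types is not the naive one your sketch suggests. Concretely, take $t=u_jvu_m^{-1}$ with $m<j$, $v$ nontrivial, not starting with $b$ and not ending with $b^{-1}$. If $t\in\gamma_j\Gamma_j$ the pair is of type (A2); but the hypothesis is also compatible with $t\in\gamma_i\Gamma_i$ for some $m<i<j$ (one checks that then the only witnesses have both indices equal to $m$), and in that case the pair is of type (C3), with $u_jv$ playing the role of the $v$ in (C3) --- so a prefix block $u_j$ whose index differs from the coset index must be absorbed, and the position of the coset index relative to the prefix/suffix indices decides between A-, B- and C-behaviour. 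Similarly $t=u_mvu_{n_2}^{-1}$ is of type (A1) when the coset index $n_1$ satisfies $m<n_1<n_2$, but of type (B4) when $n_1<m$. Pinning down this correspondence, proving both directions of every ``if and only if'' in every leaf, and discarding the leaves incompatible with $s_1^{-1}X_+\cap s_2^{-1}X_+\neq\emptyset$ is the real work of the proof (three pages of it in the paper); until that bookkeeping is actually done, what you have is a plausible plan rather than a proof of the classification.
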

\begin{proof} We consider first the case that there exist $x=(y, z), x'=(y', z')\in X$ such that $s_1x\in D_{n_1}\times C_{n_1}$, $s_2x\in D_{n_2}\times C_{n_2}$ and $s_1x', s_2x'\in D_m\times C_m$ with $n_1\neq n_2$. By symmetry we may assume that $n_1<n_2$. By Lemma~\ref{L-free1} we have $t\in \gamma_{n_1}\Gamma_{n_1}\subseteq \Gamma_{n_1-1}$ and $m<n_1$. Since $tD_m\cap D_m\neq \emptyset$, by Lemma~\ref{L-free2} either $t$ starts with $u_m$ or $t$ ends with $u_m^{-1}$.
We separate the case into two subcases.

Consider first the subcase $t$ ends with $u_{n_2}^{-1}$. Then $t$ must start with $u_m$.
Thus $t=u_mvu_{n_2}^{-1}$  for some nontrivial $v\in \Gamma$ not starting with $b$ and not ending with $b^{-1}$. We say that $(s_1, s_2)$ has type (A1). Clearly $m$ and $n_2$ are determined by $t$. Also $n_1$ is determined by $t$ as $t\in C_{n_1}$.
Let $x''=(y'', z'')\in X$ such that $s_2x''\in D_{k_2}\times C_{k_2}$.
If $k_2\neq n_2$, then $s_1y''=t(s_2y'')\in D_{m}$. Thus if $k_2\neq n_2$ and $s_1x''\in X_+$ then $s_1x''\in D_m\times C_m$ and hence $k_2=m$ by Lemma~\ref{L-free1}. Conversely, if $k_2=m$, then by Lemma~\ref{L-free5} we do have $s_1x''\in D_m\times C_m$.
If $k_2=n_2$ and $s_1x''\in X_+$, then by Lemma~\ref{L-free1} we have $s_1x''\in D_{n_1}\times C_{n_1}$ and hence by Lemma~\ref{L-free6} the point $s_2y''$ must start with $t^{-1}u_{n_1}=u_{n_2}v^{-1}u_m^{-1}u_{n_1}$. Conversely, if $k_2=n_2$ and $s_2y''$ starts with $t^{-1}u_{n_1}$, then by Lemma~\ref{L-free6} we do have $s_1x''\in D_{n_1}\times C_{n_1}$.

Next consider the subcase $t$ does not end  with $u_{n_2}^{-1}$. Since $tD_{n_2}\cap D_{n_1}\neq \emptyset$, by Lemma~\ref{L-free2} either $t=u_{n_1}u_{n_2}^{-1}$ or $t$ starts with $u_{n_1}$. Then $t$ cannot start with $u_m$, and hence $t$ ends with $u_m^{-1}$. It follows that $t$ starts with $u_{n_1}$.
Thus $t=u_{n_1}vu_m^{-1}$ for some
nontrivial $v\in \Gamma$ not starting with $b$ and not ending with $b^{-1}$. We say that $(s_1, s_2)$ has type (A2).   Clearly $m$ and $n_1$ are determined by $t$. Let $x''=(y'', z'')\in X$ such that $s_2x''\in D_{k_2}\times C_{k_2}$.
If $k_2\neq m$, then $s_1y''=t(s_2y'')\in D_{n_1}$. Thus if $k_2\neq m$ and $s_1x''\in X_+$, then $s_1x''\in D_{n_1}\times C_{n_1}$ and hence $k_2>n_1$ by Lemma~\ref{L-free1}. Conversely, if $k_2>n_1$, then we do have $s_1x''\in D_{n_1}\times C_{n_1}$. If $k_2=m$ and $s_1x''\in X_+$, then by Lemma~\ref{L-free1} we have $s_1x''\in D_{m}\times C_{m}$ and hence by Lemma~\ref{L-free6} the point $s_2y''$ starts with $t^{-1}u_m=u_mv^{-1}u_{n_1}^{-1}u_m$. Conversely, if $k_2=m$ and $s_2y''$ starts with $t^{-1}u_m$, then by Lemma~\ref{L-free6} we do have $s_1x''\in D_{m}\times C_{m}$.

Now consider the case there exist $x=(y, z)\in X$ such that $s_1x\in D_{n_1}\times C_{n_1}$, $s_2x\in D_{n_2}\times C_{n_2}$ with $n_1\neq n_2$, but
there is no $x'\in X$ such that $s_1x', s_2x'\in D_{m}\times C_{m}$. By symmetry we may assume that $n_1<n_2$.
By Lemma~\ref{L-free1} we have $t\in \gamma_{n_1}\Gamma_{n_1}$, and for any $x'=(y', z')\in X$ such that $s_1x'\in D_{m_1}\times C_{m_1}$ and $s_2x'\in D_{m_2}\times C_{m_2}$ we have
\begin{align} \label{E-B}
m_1=n_1<m_2.
\end{align}
 We separate the case into four subcases.

We consider first the subcase that there are  some $x^\sharp=(y^\sharp, z^\sharp), x'=(y', z')\in X$ such that $s_1x^\sharp, s_1x'\in D_{n_1}\times C_{n_1}$ and $s_2x^\sharp\in D_{m_2}\times C_{m_2}$ and $s_2x'\in D_{m}\times C_{m}$ with $n_1<\min(m_2, m)$ and $m_2\neq m$, and
$t$ ends with $u_m^{-1}$. Since $tD_{m_2}\cap D_{n_1}\neq \emptyset$, by Lemma~\ref{L-free2}
the element $t$ starts with $u_{n_1}$. Thus $t=u_{n_1}vu_m^{-1}$  for some nontrivial $v\in \Gamma$ not starting with $b$ and not ending with $b^{-1}$.
We say $(s_1, s_2)$ has type (B1). Clearly $n_1$ and $m$ are determined by $t$.
Let $x''=(y'', z'')\in X$ such that $s_2x''\in D_{k_2}\times C_{k_2}$.
If $s_1x''\in X_+$, then by \eqref{E-B} we have $k_2>n_1$ and $s_1x''\in D_{n_1}\times C_{n_1}$, and hence by Lemma~\ref{L-free7} the point $s_2y''$ does not start with $t^{-1}u_{n_1}b=u_mv^{-1}b$. Conversely, if $k_2>n_1$ and $s_2y''$ does not start with $t^{-1}u_{n_1}b$, then by Lemma~\ref{L-free7} we do have $s_1x''\in D_{n_1}\times C_{n_1}$.
In particular, if $m\neq k_2>n_1$, then $s_1x''\in D_{n_1}\times C_{n_1}$.

Next consider the subcase that for any $x'\in X$ such that $s_1x'\in D_{n_1}\times C_{n_1}$ and $s_2x'\in D_{m_2}\times C_{m_2}$ the element $t$ does not end with $u_{m_2}^{-1}$ and we do have some $x'=(y', z')\in X$ such that $s_1x'\in D_{n_1}\times C_{n_1}$ and $s_2x'\in D_{m_2}\times C_{m_2}$ with $m_2\neq n_2$.
Since $tD_{m_2}\cap D_{n_1}$ and $tD_{n_2}\cap D_{n_1}$ are nonempty, by Lemma~\ref{L-free2} the element $t$ must start with $u_{n_1}$. Say, $t=u_{n_1}v$  for some $v\in \Gamma$ not starting with $b$. We say $(s_1, s_2)$ has type (B2). Clearly $n_1$ is determined by $t$.
Note that $v$ cannot end with $u_k^{-1}$ for any $k>n_1$, otherwise we can easily find some $\tilde{x}\in X$ such that $s_1\tilde{x}\in D_{n_1}\times C_{n_1}$ and $s_2\tilde{x}\in D_{k}\times C_{k}$ contradicting to our assumption.
Let $x''=(y'', z'')\in X$ such that $s_2x''\in D_{k_2}\times C_{k_2}$. If $s_1x''\in X_+$, then by \eqref{E-B} we have $k_2>n_1$ and $s_1x''\in D_{n_1}\times C_{n_1}$, and hence by Lemma~\ref{L-free7} the point $s_2y''$ does not start with $t^{-1}u_{n_1}b=v^{-1}b$, which implies that $t\neq u_{n_1}a^{-k_2}$. Conversely, if $k_2>n_1$ and  $t\neq u_{n_1}a^{-k_2}$, then $s_2y''$ does not start with $t^{-1}u_{n_1}b=v^{-1}b$ and hence by Lemma~\ref{L-free7} we do have $s_1x''\in D_{n_1}\times C_{n_1}$.

 Next consider the subcase that for any $\tilde{x}\in X$ such that $s_1\tilde{x}\in D_{n_1}\times C_{n_1}$ and $s_2\tilde{x}\in D_{m}\times C_{m}$ we have $m=n_2$.  If $t$ starts with $u_{n_1}$, then for any large enough $m>n_1$ and any $\tilde{x}\in X$ with $s_2\tilde{x}\in D_{m}\times C_{m}$ one has
$s_1\tilde{x}\in D_{n_1}\times C_{n_1}$, which contradicts our assumption. Since $tD_{n_2}\cap D_{n_1}$ is nonempty,
by Lemma~\ref{L-free2} either
 $t=u_{n_1}u_{n_2}^{-1}$ or $t$ ends with $u_{n_2}^{-1}$.
Suppose that $t=u_{n_1}u_{n_2}^{-1}$. We say $(s_1, s_2)$ has type (B3). Clearly $n_1$ and $n_2$ are determined by $t$.
Let $x''=(y'', z'')\in X$ with $s_2x''\in D_{k_2}\times C_{k_2}$. If $s_1x''\in X_+$, then by \eqref{E-B} and our assumption $k_2=n_2$ and  $s_1x''\in D_{n_1}\times C_{n_1}$. Conversely, if $k_2=n_2$, then by Lemma~\ref{L-free6} we do have $s_1x''\in D_{n_1}\times C_{n_1}$. Now suppose that $t$ ends with $u_{n_2}^{-1}$ instead. Then $t=vu_{n_2}^{-1}$ for some $v\in \Gamma$ not ending with $b^{-1}$ and not starting with $u_{n_1}$. We say $(s_1, s_2)$ has type (B4). Clearly $n_2$ is determined by $t$. Also $n_1$ is determined by $t$ as $t\in C_{n_1}$.
Let $x''=(y'', z'')\in X$ with $s_2x''\in D_{k_2}\times C_{k_2}$.
If $s_1x''\in X_+$, then by \eqref{E-B} and our assumption $k_2=n_2$ and  $s_1x''\in D_{n_1}\times C_{n_1}$, and hence by Lemma~\ref{L-free6} the point $s_2y''$ starts with $t^{-1}u_{n_1}=u_{n_2}v^{-1}u_{n_1}$. Conversely, if $k_2=n_2$ and $s_2y''$ starts with $t^{-1}u_{n_1}$, then by Lemma~\ref{L-free6} we do have $s_1x''\in D_{n_1}\times C_{n_1}$.

Finally consider the case there is no $x\in X$ such that $s_1x\in D_{n_1}\times C_{n_1}$, $s_2x\in D_{n_2}\times C_{n_2}$ with $n_1\neq n_2$. We separate it into three subcases.

Consider first the subcase that there are $x, x'\in X$ such that $s_1x, s_2x\in D_{n_2}\times C_{n_2}$ and $s_1x', s_2x'\in D_{n_1}\times C_{n_1}$ with $n_1\neq n_2$. Since $tD_{n_2}\cap D_{n_2}$ and $tD_{n_1}\cap D_{n_1}$ are nonempty, by Lemma~\ref{L-free2} the element $t$ must end with either $u_{n_1}^{-1}$ or $u_{n_2}^{-1}$.
Without loss of generality, assume that $t$ ends with $u_{n_2}^{-1}$. As $tD_{n_1}\cap D_{n_1}$ is nonempty, Lemma~\ref{L-free2} implies that $t$ starts with $u_{n_1}$.
Thus $t=u_{n_1}vu_{n_2}^{-1}$  for some nontrivial $v\in \Gamma$ not starting with $b$ and not ending with $b^{-1}$.
We say $(s_1, s_2)$ has type (C1). Clearly $n_1$ and $n_2$ are determined by $t$.
Let $x''=(y'', z'')\in X$ such that $s_2x''\in D_{k_2}\times C_{k_2}$.
If $k_2\neq n_2$, then $s_1y''=t(s_2y'')\in D_{n_1}$. Thus if $k_2\neq n_2$ and $s_1x''\in X_+$ then $s_1x''\in D_{n_1}\times C_{n_1}$ and hence $k_2=n_1$ by Lemma~\ref{L-free5}. Conversely, if $k_2=n_1$, then by Lemma~\ref{L-free5} we do have $s_1z''\in C_{n_1}$ and hence $s_1x''\in D_{n_1}\times C_{n_1}$.
If $k_2=n_2$, then by Lemma~\ref{L-free5} we have $s_1z''\in C_{n_2}$. Thus if $k_2=n_2$
 and $s_1x''\in X_+$, then $s_1x''\in D_{n_2}\times C_{n_2}$ and hence  by Lemma~\ref{L-free6} the point $s_2y''$ must start with $t^{-1}u_{n_2}=u_{n_2}v^{-1}u_{n_1}^{-1}u_{n_2}$. Conversely, if $k_2=n_2$ and $s_2y''$ starts with $t^{-1}u_{n_2}$, then by Lemma~\ref{L-free6} we do have $s_1x''\in D_{n_2}\times C_{n_2}$.

Next consider the subcase that there is some $m\ge 2$ such that for any $x=(y, z)\in X$ such that $s_1x\in D_{n_1}\times C_{n_1}$, $s_2x\in D_{n_2}\times C_{n_2}$ one has $n_1=n_2=m$. Since $tD_m\cap D_m$ is nonempty, by Lemma~\ref{L-free2} either $t$ starts with $u_m$ or $t$ ends with $u_m^{-1}$. Note that $t$ starts with $u_m$ exactly when $t^{-1}$ ends with $u_m^{-1}$. By symmetry we may assume that $t$ ends with $u_m^{-1}$. Suppose that $t$ also starts with $u_m$. Then $t=u_mvu_m^{-1}$ for some nontrivial $v\in \Gamma$ not starting with $b$ and not ending with $b^{-1}$. We say that $(s_1, s_2)$ has type (C2). In this case $m$ is determined by $t$.
Let $x''=(y'', z'')\in X$ such that $s_2x''\in D_{k_2}\times C_{k_2}$.
If $s_1x''\in X_+$, then by our assumption $k_2=m$ and $s_1x''\in D_m\times C_m$, and hence by Lemma~\ref{L-free7} the point $s_2y''$ does not start with $t^{-1}u_mb= u_mv^{-1}b$. Conversely, if $k_2=m$ and $s_2y''$ does not start with $t^{-1}u_mb$, then by Lemmas~\ref{L-free7} and \ref{L-free5} we do have $s_1x''\in D_m\times C_m$. Now suppose instead that $t$ does not start with $u_m$. Then $t=vu_m^{-1}$ for some $v\in \Gamma$ not starting with $u_m$ and not ending with $b^{-1}$.
We say that $(s_1, s_2)$ has type (C3). In this case $m$ is also determined by $t$.
Let $x''=(y'', z'')\in X$ such that $s_2x''\in D_{k_2}\times C_{k_2}$.
If $s_1x''\in X_+$, then by our assumption $k_2=m$ and $s_1x''\in D_m\times C_m$, and hence by Lemma~\ref{L-free6} the point $s_2y''$ starts with $t^{-1}u_m=u_mv^{-1}u_m$.
Conversely, if $k_2=m$ and $s_2y''$ starts with $t^{-1}u_m$, then by Lemmas~\ref{L-free6} and \ref{L-free5} we do have $s_1x''\in D_m\times C_m$.
\end{proof}

In the next $9$ lemmas we give an upper bound for the size of an independence set $M\subseteq \Gamma$ for $(X_+, X_-)$ when every pair in $M$ (with respect to some linear order of $M$) has a fixed type in Lemma~\ref{L-free3}.

For each finite independence set $M\subseteq \Gamma$  for $(X_+, X_-)$ and  each map $\omega: M\rightarrow \{+, -\}$, fix
a point $x_\omega=(y_\omega, z_\omega)\in \bigcap_{s\in M}s^{-1}X_{\omega(s)}$. For any such $\omega$ and any $s\in \omega^{-1}(+)$, the point $sx_{\omega}$ lies in $D_n\times C_n$ for a unique $n\ge 2$. Denote this $n$ by $g(\omega, s)$.

\begin{lemma} \label{L-A1}
Let $M\subseteq \Gamma$ be a finite independence set for $(X_+, X_-)$ with cardinality $\ell$.
List the elements of $M$ as $s'_1, \dots, s'_{\ell}$. Assume that $(s'_i, s'_j)$ has type (A1) for all $1\le i<j\le \ell$. Then $\ell<n_{A1}:=21$.
\end{lemma}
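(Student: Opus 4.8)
The plan is to distill a rigid skeleton on $M$ from type~(A1) and then count labelings. Abbreviating $s_i'$ to $s_i$, put $t_{ij}=s_is_j^{-1}$ for $i<j$ and let $m_{ij}<p_{ij}<q_{ij}$ be the three integers supplied by type~(A1), so that $t_{ij}=u_{m_{ij}}v_{ij}u_{q_{ij}}^{-1}$ with $v_{ij}$ nontrivial, not starting with $b$, not ending with $b^{-1}$, and $t_{ij}\in\gamma_{p_{ij}}\Gamma_{p_{ij}}$. Retain the points $x_\omega=(y_\omega,z_\omega)$ and the functions $g(\omega,\cdot)$ introduced just before the lemma.

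The first task is two rigidity facts. \emph{(i)} For $i<j<j'$ the cocycle relation $t_{ij'}=t_{ij}t_{jj'}$, worked out on reduced words, shows that the seam $u_{q_{ij}}^{-1}u_{m_{jj'}}$ either does not cancel (when $q_{ij}\neq m_{jj'}$) or telescopes completely, $u_{q_{ij}}^{-1}u_{m_{jj'}}=1$, collapsing $t_{ij'}$ to $u_{m_{ij}}(v_{ij}v_{jj'})u_{q_{jj'}}^{-1}$ (when $q_{ij}=m_{jj'}$); in either case the reduced form of $t_{ij'}$ begins with $u_{m_{ij}}$ and ends with $u_{q_{jj'}}^{-1}$. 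Since a reduced word begins with $u_m$, resp.\ ends with $u_n^{-1}$, for at most one $m$, resp.\ $n$, and type~(A1) forces $t_{ij'}$ into the shape $u_{m_{ij'}}(\cdots)u_{q_{ij'}}^{-1}$, we get $m_{ij'}=m_{ij}$ and $q_{ij'}=q_{jj'}$; iterating, $m_{ij}=:m_i$ depends only on $i$ and $q_{ij}=:q_j$ only on $j$. \emph{(ii)} Set $d_k=p_{k,k+1}$. From $t_{ij}=t_{ii'}t_{i'j}$, reducing modulo $\Gamma_n$ with $n=\min(p_{ii'},p_{i'j})$ and using the standing assumptions $\gamma_n\notin\Gamma_n$ and $\gamma_n^2\notin\Gamma_n$, one rules out $p_{ii'}=p_{i'j}$ and gets $p_{ij}=\min(p_{ii'},p_{i'j})$; by induction $p_{ij}=\min(d_i,\dots,d_{j-1})$, so $d_k\neq d_{k+1}$ and $d_{j-1}<q_j$ for all $k,j$.

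Next, feed the all-plus labeling $\omega=+^M$ into type~(A1): for $i<j$ both $s_ix_\omega$ and $s_jx_\omega$ lie in $X_+$, so either $g(\omega,s_i)=g(\omega,s_j)=m_i$, or $g(\omega,s_i)=p_{ij}$, $g(\omega,s_j)=q_j$ and $s_jy_\omega$ starts with $t_{ij}^{-1}u_{p_{ij}}$; in both cases $g(\omega,s_i)\le g(\omega,s_j)$. Hence $k_i:=g(\omega,s_i)$ is nondecreasing in $i$; cut $\{1,\dots,\ell\}$ into the maximal blocks on which $k$ is constant. On each block the within-block pairs hit the first alternative, so $m_i$ equals the block value at every non-maximal index of the block. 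If a block $B$ other than the first had two elements, pick $j\in B$ above its least element and $i$ in an earlier block: the pair $(i,j)$ hits the second alternative, so $q_j$ equals the value $v$ of $B$; but $j-1\in B$ is not the largest index of $B$, so $m_{j-1}=v$, whence $v=m_{j-1}<p_{j-1,j}=d_{j-1}<q_j=v$ by~(i) and~(ii), absurd. Therefore every block but the first is a singleton, and $\ell=|B_1|+r-1$ where $r$ is the number of blocks.

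It remains to bound $|B_1|$ and $r$ by absolute constants, which is the main obstacle. Here I would run type~(A1) on the one-coordinate flips $\omega'$ of $+^M$ and on a few labelings supported on two coordinates: on the surviving coordinates one again gets a chain, but $s_ix_{\omega'}\in X_-$ now yields $g(\omega',s_j)\neq m_i$, killing the first alternative and forcing $s_jy_{\omega'}$ to start with many of the words $t_{ij}^{-1}u_{p_{ij}}$ while avoiding others. As each of these has the rigid form $u_{q_j}\,v_{ij}^{-1}\,b\,a^{m_i}b^{-1}a^{p_{ij}-m_i}b\,a^{-p_{ij}}b^{-1}$, prefix comparabilities and incomparabilities among them become arithmetic constraints on the $m_i$, $p_{ij}$, $q_j$ that, together with~(i), (ii) and the monotonicity of $(k_i)$, fail once $|B_1|$ or $r$ is large; tracking this arithmetic yields $\ell<21$. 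I expect this word-combinatorial bookkeeping, and the extraction of the explicit constant, to be the technically heaviest part; steps~(i) and~(ii) are the conceptual core, and a similar skeleton should recur, with different word shapes, in the remaining eight type lemmas.
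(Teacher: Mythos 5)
Your argument stops short of proving the lemma: everything after ``It remains to bound $|B_1|$ and $r$'' is a plan, not a proof. The entire content of the statement is the absolute bound $\ell<21$, and at that point you have only reduced it to bounding two quantities that, a priori, could be arbitrarily large; the claim that running type (A1) on one- and two-coordinate flips produces ``arithmetic constraints that fail once $|B_1|$ or $r$ is large'' is exactly the step that needs to be carried out, and it is not. There is also a gap in your rigidity fact (i): when the seam $u_{q_{ij}}^{-1}u_{m_{jj'}}$ telescopes, the product $v_{ij}v_{jj'}$ need not be reduced -- it can cancel entirely or reduce to a word beginning with $b$ (the hypotheses only forbid $v_{ij}$ ending in $b^{-1}$ and $v_{jj'}$ beginning with $b$), and such cancellation can then eat into the prefix $u_{m_{ij}}$ or the suffix $u_{q_{jj'}}^{-1}$, so the conclusion that the reduced form of $t_{ij'}$ begins with $u_{m_{ij}}$ and ends with $u_{q_{jj'}}^{-1}$ is not justified as stated. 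Since your block decomposition and the intended final count both lean on (i), this is not a cosmetic issue.

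For comparison, the paper's proof avoids any cocycle or reduced-word rigidity and works purely with the dynamical characterization built into type (A1), together with the freedom to choose labelings $\omega$ on the independence set. With $\ell\ge 21$, a pigeonhole on the all-plus labeling gives either $5$ elements with pairwise distinct values of $g$, or $6$ elements with a common value $m$; in the latter case flipping one element to $-$ forces the remaining five to have values $\neq m$ and, by type (A1), strictly increasing values. One then has $5$ elements $s_1,\dots,s_5$, labeled $+$, with $n_1<\dots<n_5$, and observes that $s_5y_{\omega_0}$ starts with each word $\xi_i=s_5s_i^{-1}u_{n_i}$ ($i=2,3,4$); since these are prefixes of one infinite word they are comparable, and choosing $i_3$ with $\xi_{i_3}$ longest, two further single-flip labelings $\omega_1,\omega_2$ (flipping $s_{i_1}$, resp.\ $s_{i_2}$) force $g(\omega_k,s_5)<n_1$, then $g(\omega_1,s_5)=g(\omega_2,s_5)=g(\omega_2,s_{i_1})$, and finally $s_{i_1}x_{\omega_1}\in X_+$, contradicting $\omega_1(s_{i_1})=-$. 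If you want to salvage your route, you would need both to repair (i) (or drop it and argue as the paper does, using only the ``if and only if'' clauses of type (A1)) and to actually produce the finite bound rather than assert it.
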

\begin{proof} Assume that $\ell\ge 21$.

We claim that there are some map $\omega_0: M\rightarrow \{+, -\}$ and a set $A\subseteq M$ with cardinality $5$
such that $\omega_0=+$ on $A$ and the map $s\mapsto g(\omega_0, s)$ on $A$ is injective. Let $\omega'$ be the map $M\rightarrow \{+\}$. Then either there is some subset $A$ of $M$ with cardinality $5$ such that the map $s\mapsto g(\omega', s)$ on $A$ is injective  or there is some subset $B$ of $M$ with cardinality $6$ such that $g(\omega', s)=m$ for some $m\ge 2$ and all $s\in B$. In the first situation we can take $\omega_0=\omega'$. Thus assume that $B\subseteq M$ has cardinality $6$ and $g(\omega', s)=m$ for  all $s\in B$. List the elements of $B$ as $\theta_1, \dots, \theta_6$ such that $(\theta_i, \theta_j)$ has type (A1) for all $1\le i<j\le 6$. Take $\omega'': M\rightarrow \{+, -\}$ such that $\omega''(\theta_1)=-$ and $\omega''(\theta_i)=+$ for all $2\le i\le 6$. For any $2\le i\le 6$, since $(\theta_1, \theta_i)$ has type (A1) and $\omega''(\theta_1)=-$, we have $g(\omega'', \theta_i)\neq m$. For any $2\le i<j\le 6$, since $(\theta_i, \theta_j)$ has type (A1), we have $g(\omega'', \theta_i)<g(\omega'', \theta_j)$. Then we can take $A=\{\theta_2, \dots, \theta_6\}$ and $\omega_0=\omega''$.
This proves our claim.

List the elements of $A$ as $s_1, \dots, s_5$ such that $(s_i, s_j)$ has type (A1) for all $1\le i<j\le 5$. Set $n_i=g(\omega_0, s_i)$. Then $n_1<n_2<\cdots<n_5$. Now $s_5y_{\omega_0}$ starts with $\xi_i:=s_5s_i^{-1}u_{n_i}$ for $i=2, 3, 4$.
Write $\{2, 3, 4\}$ as $\{i_1, i_2, i_3\}$ such that the length of $\xi_{i_3}$  is no less than those of $\xi_{i_2}$ and $\xi_{i_1}$.

For $k=1, 2$, take $\omega_k: M\rightarrow \{+, -\}$ such that $\omega_k(s_{i_k})=-$ and $\omega_k(s)=+$ for all $s\in A\setminus \{s_{i_k}\}$. We claim that $g(\omega_k, s_5)<n_1$. Suppose that $g(\omega_k, s_5)\ge n_1$ instead.
Since $(s_1, s_5)$ has type (A1), we have $g(\omega_k, s_5)=n_5$. As $(s_{i_3}, s_5)$ has type (A1), the element $s_5y_{\omega_k}$ starts with $\xi_{i_3}$ and hence starts with $\xi_{i_k}$. Since $(s_{i_k}, s_5)$ has type (A1), we get $s_{i_k}x_{\omega_k}\in X_+$, which is a contradiction to $\omega_k(s_{i_k})=-$. This proves our claim.

Since $(s_{i_1}, s_5)$ has type (A1) and $g(\omega_2, s_5)<n_1<n_{i_1}$, we get $g(\omega_2, s_{i_1})=g(\omega_2, s_5)$.
As $(s_1, s_5)$ has type (A1) and $g(\omega_1, s_5), g(\omega_2, s_5)<n_1$, we have $g(\omega_1, s_5)=g(\omega_2, s_5)$.
Since $(s_{i_1}, s_5)$ has type (A1) and $g(\omega_1, s_5)=g(\omega_2, s_5)=g(\omega_2, s_{i_1})<n_{i_1}$, we get $s_{i_1}x_{\omega_1}\in X_+$, which is a contradiction to $\omega_1(s_{i_1})=-$.
\end{proof}

\begin{lemma} \label{L-A2}
Let $M\subseteq \Gamma$ be a finite independence set for $(X_+, X_-)$ with cardinality $\ell$.
List the elements of $M$ as $s'_1, \dots, s'_{\ell}$. Assume that $(s'_i, s'_j)$ has type (A2) for all $1\le i<j\le \ell$. Then $\ell< n_{A2}:=13$.
\end{lemma}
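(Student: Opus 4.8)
The plan is to follow the template of the proof of Lemma~\ref{L-A1}, feeding the type-(A2) description of Lemma~\ref{L-free3} into the reduced-word combinatorics of $\Gamma=F_r$. Keep the notation established before Lemma~\ref{L-A1} (the points $x_\omega=(y_\omega,z_\omega)$ and the integers $g(\omega,s)$ with $sx_\omega\in D_{g(\omega,s)}\times C_{g(\omega,s)}$). Assume $M=\{s'_1,\dots,s'_\ell\}$ is listed so that $(s'_i,s'_j)$ has type (A2) for $i<j$, with $\ell\ge 13$; we will reach a contradiction. First note that for the constant map $\omega'\equiv+$ the sequence $i\mapsto g(\omega',s'_i)$ is weakly increasing: the type-(A2) dichotomy applied to $(s'_i,s'_j)$ with $i<j$ gives $g(\omega',s'_i)=n_1(s'_i,s'_j)<g(\omega',s'_j)$ or $g(\omega',s'_i)=m(s'_i,s'_j)=g(\omega',s'_j)$.

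I would also record a nesting fact. Suppose $s_1,\dots,s_k\in M$ and $\omega\colon M\to\{+,-\}$ satisfy $g(\omega,s_i)=\mu$ for all $i$ and $(s_i,s_j)$ has type (A2) for $i<j$ (a ``monochromatic block''). Then the dichotomy above forces $m(s_i,s_j)=\mu$ for all $i<j$; a short induction on $j-i$ using the shape $u_n=a^nba^{-n}b^{-1}$ and the reducedness conditions on the $v$'s in Lemma~\ref{L-free3} then shows $s_is_j^{-1}=u_{N_i}v_{ij}u_\mu^{-1}$ with $N_i>\mu$ depending only on $i$; consequently, for $i<k$, $s_ks_i^{-1}u_\mu=u_\mu v_{ik}^{-1}u_{N_i}^{-1}u_\mu=(s_ks_{i+1}^{-1}u_\mu)(v_{i,i+1}^{-1}u_{N_i}^{-1}u_\mu)$ with no cancellation at the displayed junction, so that $s_ks_{k-1}^{-1}u_\mu\prec\dots\prec s_ks_1^{-1}u_\mu$, each a proper prefix of the next.

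The heart of the matter is the \emph{claim}: there is no $\omega_0\colon M\to\{+,-\}$ and no four-element $A\subseteq M$ with $\omega_0\equiv+$ on $A$ and $s\mapsto g(\omega_0,s)$ injective on $A$. Granting the claim I finish as in Lemma~\ref{L-A1}. If $g(\omega',\cdot)$ takes at least four values on $M$, choosing one $s'$ per value gives such an $A$ with $\omega_0=\omega'$, a contradiction. Otherwise $g(\omega',\cdot)$ takes at most three values, so since $\ell\ge 13=3\cdot4+1$ some value $\mu$ is attained on a five-element block $B=\{s_1,\dots,s_5\}$ in the inherited order. Flip the \emph{middle} element: set $\omega^*(s_3)=-$ and $\omega^*=+$ elsewhere. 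Applying the type-(A2) dichotomy to all pairs in $\{s_1,s_2,s_4,s_5\}$: if $g(\omega^*,s_5)\ne\mu$ then $g(\omega^*,\cdot)$ is strictly increasing, hence injective, on $\{s_1,s_2,s_4,s_5\}$, contradicting the claim; if $g(\omega^*,s_5)=\mu$ then $\{s_1,s_2,s_4,s_5\}$ is a monochromatic block for $\omega^*$, whose word conditions give that $s_5y_{\omega^*}$ starts with $s_5s_1^{-1}u_\mu$, while, since $\omega^*(s_3)=-$, the type-(A2) dichotomy for $(s_3,s_5)$ gives that $s_5y_{\omega^*}$ does not start with $s_5s_3^{-1}u_\mu$ --- impossible because $s_5s_3^{-1}u_\mu\prec s_5s_1^{-1}u_\mu$ by the nesting fact. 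Hence $\ell<13=n_{A2}$.

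It remains to prove the claim, and this is where I expect the real work. From a putative $A=\{s_1,s_2,s_3,s_4\}$ with $n_i:=g(\omega_0,s_i)$ pairwise distinct, the dichotomy first forces $n_1<n_2<n_3<n_4$ and $n_1(s_i,s_j)=n_i$; a reduced-word induction parallel to the nesting fact shows the inner index $m(s_i,s_j)=M_j$ depends only on $j$, with $s_is_j^{-1}=u_{n_i}v_{ij}u_{M_j}^{-1}$ and $M_j<n_1$. One then perturbs $\omega_0$ by setting one or two of $s_1,s_2,s_3$ to $-$: by the dichotomy each such flip collapses the surviving plus-coordinates into a monochromatic block whose value is one of the $M_j$, forces the word conditions ``$s_jy$ starts with $s_js_i^{-1}u_{M_j}$'' for the unflipped pairs, and (via Lemma~\ref{L-free6}) kills the word condition running through a flipped coordinate; combining these with Lemmas~\ref{L-free5}, \ref{L-free6}, \ref{L-free7} and the nesting of the words $s_js_i^{-1}u_{M_j}$ yields two incompatible ``starts-with'' assertions. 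The subtlety --- and the main obstacle --- is that these words form a prefix chain only when the relevant inner indices coincide, so the argument splits into cases according to which of $M_2,M_3,M_4$ are equal, the cases where they differ being handled by the observation that $u_M$ and $u_{M'}$ are incomparable reduced words when $M\ne M'$; this case bookkeeping is the bulk of the argument.
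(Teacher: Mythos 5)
Your overall architecture is sound and, after unwinding the logic, it is the same two-step scheme as the paper's: (i) if $\ell\ge 13$ then there exist $\omega_0$ and a four-element set $A\subseteq M$ with $\omega_0\equiv +$ on $A$ and $s\mapsto g(\omega_0,s)$ injective on $A$; (ii) no such configuration can exist. Your second paragraph proves (i) correctly, and in a slightly different way from the paper: you flip the middle element of a five-element monochromatic block and use an explicit ``nesting'' computation in $F_r$ to see that the words $s_5s_i^{-1}u_\mu$ form a prefix chain, whereas the paper simply notes that all the words $\zeta_\theta=\theta_5\theta^{-1}u_k$ are prefixes of the single infinite word $\theta_5 y_{\omega'}$ and flips the $\theta$ with the shortest $\zeta_\theta$; your nesting fact checks out, but it is extra work the paper's length-minimality trick avoids.

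The genuine gap is in (ii), which you label ``the claim'' and explicitly leave as a sketch (``this is where I expect the real work \dots this case bookkeeping is the bulk of the argument''). Nothing in the proposal actually derives a contradiction from a four-element $A$ with $n_1<n_2<n_3<n_4$; you only indicate that one should flip one or two of $s_1,s_2,s_3$ and match ``starts-with'' conditions, and you anticipate a case analysis over which inner indices $M_2,M_3,M_4$ coincide. That case analysis is in fact unnecessary, and the missing step is short: flip $s_1$ to get $\omega_1$; the type (A2) dichotomy for $(s_1,s_4)$ forces $m:=g(\omega_1,s_4)\le n_1$, and then for $i=2,3$, since $g(\omega_1,s_4)\le n_1<n_i$, the dichotomy for $(s_i,s_4)$ forces $m(s_i,s_4)=m$ and that $s_4y_{\omega_1}$ starts with $\xi_i:=s_4s_i^{-1}u_m$ — so the relevant inner indices coincide automatically (both equal $g(\omega_1,s_4)$), and $\xi_2,\xi_3$ are comparable because both are prefixes of the same infinite word $s_4y_{\omega_1}$, with no word-combinatorial case split needed. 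Writing $\{2,3\}=\{i_1,i_2\}$ with $\xi_{i_1}$ the shorter word and flipping $s_1$ and $s_{i_1}$ then gives, via $(s_1,s_4)$ and $(s_{i_2},s_4)$, that $g(\omega_2,s_4)=m$ and $s_4y_{\omega_2}$ starts with $\xi_{i_2}$, hence with $\xi_{i_1}$, so $(s_{i_1},s_4)$ of type (A2) yields $s_{i_1}x_{\omega_2}\in X_+$, contradicting $\omega_2(s_{i_1})=-$. Until you supply an argument of this kind, the central step of the lemma is unproved, so the proposal as it stands is incomplete.
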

\begin{proof} Assume that $\ell\ge 13$.

We claim that there are some map $\omega_0: M\rightarrow \{+, -\}$ and a set $A\subseteq M$ with cardinality $4$
such that $\omega_0=+$ on $A$ and the map $s\mapsto g(\omega_0, s)$ on $A$ is injective. Let $\omega'$ be the map $M\rightarrow \{+\}$. Then either there is some subset $A$ of $M$ with cardinality $4$ such that the map $s\mapsto g(\omega', s)$ on $A$ is injective  or there is some subset $B$ of $M$ with cardinality $5$ such that $g(\omega', s)=k$ for some $k\ge 2$ and all $s\in B$. In the first situation we can take $\omega_0=\omega'$. Thus assume that $B\subseteq M$ has cardinality $5$ and $g(\omega', s)=k$ for  all $s\in B$.
Take $\theta_5\in B$ such that $(\theta, \theta_5)$ has type (A2) for all $\theta
\in B\setminus \{\theta_5\}$. Note that $\theta_5y_{\omega'}$ starts with $\zeta_\theta:=\theta_5\theta^{-1}u_k$ for every $\theta\in B\setminus \{\theta_5\}$.
Take $\theta_1\in B\setminus \{\theta_5\}$ such that the length of $\zeta_{\theta_1}$ is no bigger than that of $\zeta_{\theta}$ for all $\theta\in B\setminus \{\theta_1, \theta_5\}$.
Take $\omega'': M\rightarrow \{+, -\}$ such that $\omega''(\theta_1)=-$ and $\omega''(\theta)=+$ for all $\theta\in B\setminus \{\theta_1\}$.
For any $\theta\in B\setminus \{\theta_1, \theta_5\}$, since $(\theta, \theta_5)$ has type (A2), either $k<g(\omega'', \theta)<g(\omega'', \theta_5)$ or $k=g(\omega'', \theta)=g(\omega'', \theta_5)$ and $\theta_5y_{\omega''}$ starts with $\zeta_{\theta}$. In the latter situation $\theta_5y_{\omega''}$ also starts with $\zeta_{\theta_1}$, and hence $\theta_1x_{\omega''}\in X_+$, which contradicts $\omega''(\theta_1)=-$. Thus  $k<g(\omega'', \theta)<g(\omega'', \theta_5)$ for every $\theta\in B\setminus \{\theta_1, \theta_5\}$. For any distinct $\theta, \theta'\in B\setminus \{\theta_1, \theta_5\}$, since either $(\theta, \theta')$ or $(\theta', \theta)$ has type (A2), we have $g(\omega'', \theta)\neq g(\omega'', \theta')$.
Then we can take $A=B\setminus \{\theta_1\}$ and $\omega_0=\omega''$.
This proves our claim.

List the elements of $A$ as $s_1, \dots, s_4$ such that $(s_i, s_j)$ has type (A2) for all $1\le i<j\le 4$. Set $n_i=g(\omega_0, s_i)$. Then $n_1<n_2<n_3<n_4$.

Take $\omega_1: M\rightarrow \{+, -\}$ such that $\omega_1(s_1)=-$ and $\omega_1(s)=+$ for all $s\in A\setminus \{s_1\}$. Since $(s_1, s_4)$ has type (A2), we have $g(\omega_1, s_4)\le n_1$. Set $m=g(\omega_1, s_4)$. For $i=2, 3$, since $(s_i, s_4)$ has type (A2) and $g(\omega_1, s_4)\le n_1<n_i$, we get that $g(\omega_1, s_i)=m$ and  $s_4y_{\omega_1}$ starts with $\xi_i:=s_4s_i^{-1}u_m$.
Write $\{2, 3\}$ as $\{i_1, i_2\}$ such that the length of $\xi_{i_2}$ is no less than that of $\xi_{i_1}$.

Take $\omega_2: M\rightarrow \{+, -\}$ such that $\omega_2(s_1)=\omega_2(s_{i_1})=-$ and $\omega_2(s_{i_2})=\omega_2(s_4)=+$. Since $(s_1, s_4)$ has type (A2), we have $g(\omega_2, s_4)\le n_1$. As $(s_{i_2}, s_4)$ has type (A2) and $g(\omega_2, s_4)\le n_1<n_{i_2}$, we have  $g(\omega_2, s_4)=m$ and $s_4y_{\omega_2}$ starts with $\xi_{i_2}$. Then  $s_4y_{\omega_2}$ also starts with $\xi_{i_1}$. Since $(s_{i_1}, s_4)$ has type (A2) and $g(\omega_2, s_4)=m=g(\omega_1, s_{i_1})$, we get that $s_{i_1}x_{\omega_2}\in X_+$, which contradicts $\omega_2(s_{i_1})=-$.
\end{proof}

\begin{lemma} \label{L-B1}
Let $M\subseteq \Gamma$ be a finite independence set for $(X_+, X_-)$ with cardinality $\ell$.
List the elements of $M$ as $s_1, \dots, s_{\ell}$. Assume that $(s_i, s_j)$ has type (B1) for all $1\le i<j\le \ell$. Then $\ell<n_{B1}:=4$.
\end{lemma}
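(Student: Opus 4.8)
The plan is to argue by contradiction: assume $\ell\geq 4$ and work with $s_1,s_2,s_3,s_4$ only. For $1\leq i<j\leq 4$ write $t_{ij}=s_is_j^{-1}=u_{p_{ij}}v_{ij}u_{q_{ij}}^{-1}$ for the data supplied by type (B1), so $2\leq p_{ij}<q_{ij}$ and $v_{ij}$ is nontrivial, does not start with $b$, and does not end with $b^{-1}$. I will use repeatedly two consequences of type (B1) for a pair $(s_i,s_j)$, obtained by feeding $x_\omega$ into its two ``if and only if'' statements: if $\omega(s_i)=\omega(s_j)=+$ then $g(\omega,s_i)=p_{ij}$ and $g(\omega,s_j)>p_{ij}$; and if $\omega(s_i)=-$ while $\omega(s_j)=+$ then (taking the contrapositive) either $g(\omega,s_j)\leq p_{ij}$, or $g(\omega,s_j)=q_{ij}$ and $s_jy_\omega$ starts with $t_{ij}^{-1}u_{p_{ij}}b$.

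First I would apply the constant map $\omega_+\equiv+$ on $M$: it yields $p_{12}=p_{13}=p_{14}=:p_1$, $p_{23}=p_{24}=:p_2$, $p_{34}=:p_3$ and $p_1<p_2<p_3<g(\omega_+,s_4)$. Then I would use two further maps. For $\omega$ with $\omega(s_1)=-$ and $\omega=+$ on $\{s_2,s_3,s_4\}$: the pair $(s_2,s_3)$ gives $g(\omega,s_2)=p_2$, and then the pair $(s_1,s_2)$, with $p_2>p_1=p_{12}$, forces $q_{12}=p_2$. For $\omega$ with $\omega=-$ on $\{s_1,s_2\}$ and $\omega=+$ on $\{s_3,s_4\}$: the pair $(s_3,s_4)$ gives $g(\omega,s_3)=p_3$, and then the pairs $(s_1,s_3)$ and $(s_2,s_3)$ (using $p_3>p_1$ and $p_3>p_2$) force $q_{13}=q_{23}=p_3$ and, in addition, that $s_3y_\omega$ starts with both $t_{13}^{-1}u_{p_1}b$ and $t_{23}^{-1}u_{p_2}b$.

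Now I would combine these. Since $q_{12}=p_2=p_{23}$, the identity $t_{13}=t_{12}t_{23}$ collapses (the middle $u_{p_2}^{-1}u_{p_2}$ cancels) to $u_{p_1}v_{13}u_{p_3}^{-1}=u_{p_1}v_{12}v_{23}u_{p_3}^{-1}$, hence $v_{13}=v_{12}v_{23}$ in $\Gamma$. Since $q_{13}=q_{23}=p_3$, a direct computation gives $t_{13}^{-1}u_{p_1}b=u_{p_3}v_{13}^{-1}b$ and $t_{23}^{-1}u_{p_2}b=u_{p_3}v_{23}^{-1}b$; writing $s_3y_\omega=u_{p_3}\eta$ with $\eta\in Y$ not starting with $b$ (legitimate because $s_3x_\omega\in D_{p_3}\times C_{p_3}$), it follows that the reduced word $\eta$ starts with both $v_{13}^{-1}b$ and $v_{23}^{-1}b$, so one of these two finite words is a prefix of the other. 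A short free-group cancellation argument then finishes the proof: both words end in $b$, and neither $v_{13}^{-1}$ nor $v_{23}^{-1}$ ends in $b^{-1}$, so substituting $v_{13}=v_{12}v_{23}$ shows that in the ``equal'' case $v_{12}=e_\Gamma$, in one ``proper prefix'' case $v_{12}=bw$ with $w$ not starting with $b^{-1}$, and in the other $v_{12}=w^{-1}b^{-1}$ with such a $w$ --- each contradicting a defining property of $v_{12}$ in type (B1). Thus $\ell\leq 3$, i.e.\ $\ell<n_{B1}=4$.

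The only delicate step is this last cancellation argument: ruling out that an infinite reduced word of $Y$ starts with both $v_{13}^{-1}b$ and $v_{23}^{-1}b$ once $v_{13}=v_{12}v_{23}$ with $v_{12}$ nontrivial and avoiding an initial $b$ and a final $b^{-1}$. The bookkeeping --- which finite word is longer, and what the letter right after their common block is --- must be done with some care, but it is elementary. Everything before it is just choosing the right maps $M\to\{+,-\}$ and reading off the statement of type (B1); compared with the proofs of Lemmas~\ref{L-A1} and \ref{L-A2} it is shorter, because type (B1) already forces the values $g(\omega,s_i)$ at the positive coordinates to be strictly increasing, so no preliminary pigeonhole step is required.
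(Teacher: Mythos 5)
Your argument is correct, but its endgame differs from the paper's. You and the paper start the same way: feed the all-positive map and then maps with one or two minus signs into the type (B1) characterization, which (as you note) already forces the $g$-values at the positive coordinates to be strictly increasing, so no pigeonhole is needed. The paper, however, stays entirely at the ``soft'' dynamical level: from the map that is $-$ on $\{s_1,s_2\}$ it extracts that $s_4y_{\omega_1}$ starts with both words $\xi_i=s_4s_i^{-1}u_{n_i}b$, compares their lengths, and then builds a \emph{third} nontrivial test map (flipping only the element with the longer $\xi_i$) to force a sign contradiction -- the same length-comparison trick it reuses in Lemmas~\ref{L-A1}, \ref{L-A2}, \ref{L-B4}, \ref{L-C1}--\ref{L-C3}. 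You instead pin down the parameters $q_{12}=p_2$ and $q_{13}=q_{23}=p_3$, exploit the group identity $s_1s_3^{-1}=(s_1s_2^{-1})(s_2s_3^{-1})$ to get $v_{13}=v_{12}v_{23}$, and derive the contradiction by pure reduced-word combinatorics from the fact that $\eta$ starts with both $v_{13}^{-1}b$ and $v_{23}^{-1}b$ while $v_{12}$ is nontrivial, does not start with $b$ and does not end with $b^{-1}$; I checked the three prefix cases (equality, and the two proper-prefix cases, where $v_{12}$ comes out as $e_\Gamma$, as $bw'$, or as $w'^{-1}b^{-1}$) and they do all contradict type (B1), using that $u_{p_3}v_{13}^{-1}b$ and $u_{p_3}v_{23}^{-1}b$ are already reduced as written. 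What your route buys is that it avoids the length-comparison trick and the third carefully chosen sign pattern, ending the proof inside a single configuration $x_\omega$; what it costs is that it leans on the explicit algebraic form of $t$ supplied by type (B1) and on a cancellation argument that must be done carefully (you flagged this, and it does go through), whereas the paper's proof uses the characterization only as a black box and is uniform with its other eight lemmas.
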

\begin{proof} Assume that $\ell=4$.

Let $\omega_0$ be the map $M\rightarrow \{+\}$. Set $n_i=g(\omega_0, s_i)$. Then $n_1<n_2<n_3<n_4$.

Define $\omega_1: M\rightarrow \{+, -\}$ by $\omega_1(s_1)=\omega_1(s_2)=-$ and $\omega_1(s_3)=\omega_1(s_4)=+$. Since $(s_3, s_4)$ has type (B1), we have $g(\omega_1, s_4)>n_3$. Set $m=g(\omega_1, s_4)$. For $i=1, 2$, since $(s_i, s_4)$ has type (B1) and $g(\omega_1, s_4)>n_3>n_i$, we get that  $s_4y_{\omega_1}$ starts with $\xi_i:=s_4s_i^{-1}u_{n_i}b$. Write $\{1, 2\}$ as $\{i_1, i_2\}$ such that the length of $\xi_{i_2}$ is no less than that of $\xi_{i_1}$.

 Define $\omega_2: M\rightarrow \{+, -\}$ by $\omega_2(s_{i_2})=-$ and $\omega_2(s)=+$ for all $s\in M\setminus \{s_{i_2}\}$. Since $(s_3, s_4)$ has type (B1), we have $g(\omega_2, s_4)>n_3$. As $(s_{i_2}, s_4)$ has type (B1), we have  $g(\omega_2, s_4)=m$ and $s_4y_{\omega_2}$ starts with $\xi_{i_2}$. Then  $s_4y_{\omega_2}$ also starts with $\xi_{i_1}$. Since $(s_{i_1}, s_4)$ has type (B1), we get that $s_{i_1}x_{\omega_2}\not\in X_+$, which contradicts $\omega_2(s_{i_1})=+$.
\end{proof}

\begin{lemma} \label{L-B2}
Let $M\subseteq \Gamma$ be a finite independence set for $(X_+, X_-)$ with cardinality $\ell$.
List the elements of $M$ as $s_1, \dots, s_{\ell}$. Assume that $(s_i, s_j)$ has type (B2) for all $1\le i<j\le \ell$. Then $\ell<n_{B2}:=4$.
\end{lemma}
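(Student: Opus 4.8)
The plan is to adapt the flip strategy from the proofs of Lemmas~\ref{L-A1}--\ref{L-B1}, taking advantage of the fact that type (B2) is more rigid than type (B1): in type (B2), when $s_1x''\in X_+$ the target block $D_{n_1}\times C_{n_1}$ is determined by $t=s_1s_2^{-1}$ alone, and the membership criterion ``$k_2>n_1$ and $t\neq u_{n_1}a^{-k_2}$'' depends only on $t$ and the single index $k_2$, with no dependence on the $Y$-coordinate $y''$. Consequently one flip should suffice, and I expect the argument to give the (slightly stronger than needed) bound $\ell\le 2<n_{B2}$. Since having type (B2) is a property of an ordered pair, one may first pass to a three-element subset: suppose for contradiction $\ell\ge 3$ and assume $M=\{s_1,s_2,s_3\}$ with $(s_i,s_j)$ of type (B2) for $1\le i<j\le 3$; write $t_{ij}=s_is_j^{-1}$ and let $n_{ij}$ be the index attached to $(s_i,s_j)$ in Lemma~\ref{L-free3}.

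First I would feed the constant assignment $\omega_0\colon M\to\{+\}$ through the type (B2) clauses. For each $i<j$, applying the clause for $(s_i,s_j)$ to $x''=x_{\omega_0}$, with $k_2=g(\omega_0,s_j)$, and using $s_ix_{\omega_0}\in X_+$, one reads off that $s_ix_{\omega_0}\in D_{n_{ij}}\times C_{n_{ij}}$ --- so $g(\omega_0,s_i)=n_{ij}$ --- together with $g(\omega_0,s_j)>n_{ij}$ and $t_{ij}\neq u_{n_{ij}}a^{-g(\omega_0,s_j)}$. Setting $N_i:=g(\omega_0,s_i)$, this yields the chain $N_1<N_2<N_3$ along with $n_{12}=n_{13}=N_1$, $n_{23}=N_2$, and the exclusion $t_{12}\neq u_{N_1}a^{-N_2}$.

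Next I would flip $s_1$ to $-$: choose $\omega_1$ with $\omega_1(s_1)=-$ and $\omega_1(s_2)=\omega_1(s_3)=+$, and take $x_{\omega_1}\in s_1^{-1}X_-\cap s_2^{-1}X_+\cap s_3^{-1}X_+$, which is nonempty since $M$ is an independence set. Since $s_3x_{\omega_1}\in X_+$ it lies in some $D_{k}\times C_{k}$; applying the type (B2) clause for $(s_2,s_3)$ with this $k$ and using $s_2x_{\omega_1}\in X_+$ forces $s_2x_{\omega_1}\in D_{n_{23}}\times C_{n_{23}}=D_{N_2}\times C_{N_2}$, i.e.\ $g(\omega_1,s_2)=N_2$. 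Now apply the type (B2) clause for $(s_1,s_2)$ with $k_2=g(\omega_1,s_2)=N_2$: the membership criterion becomes ``$N_2>N_1$ and $t_{12}\neq u_{N_1}a^{-N_2}$'', both of which were already established from $\omega_0$. Hence $s_1x_{\omega_1}\in X_+$, contradicting $\omega_1(s_1)=-$ since $X_+\cap X_-=\emptyset$.

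The step I expect to require the most care is handling the exceptional value in the type (B2) criterion: the condition $t\neq u_{n_1}a^{-k_2}$ can fail, but only for one distinguished value of $k_2$, so one must verify that this distinguished value is the same under $\omega_0$ and under $\omega_1$. That is precisely what the $(s_2,s_3)$ computation guarantees --- it pins $g(\omega_1,s_2)=N_2=g(\omega_0,s_2)$ --- so the exclusion obtained from the all-$+$ assignment transfers verbatim to the flipped one. Everything else is the same index bookkeeping as in the preceding lemmas. Should the one-flip shortcut turn out to have a gap, one can instead mimic the two-flip argument of Lemma~\ref{L-B1} verbatim (tracking $n_{ij}$ in place of $m_{ij}$ and exploiting the rigid target block $D_{n_{ij}}\times C_{n_{ij}}$), which lands exactly on $\ell<4$.
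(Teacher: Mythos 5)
Your argument is correct, and it in fact proves more than the lemma requires: it rules out a three-element independence set all of whose ordered pairs have type (B2), so one could take $n_{B2}=3$. It is also a genuinely different route from the paper's. The paper works with $\ell=4$ and three assignments: after the all-plus assignment it flips both $s_1,s_2$ to $-$ (keeping $s_3,s_4$ at $+$) and uses the failure of the type (B2) criterion, together with $g(\omega_1,s_4)>n_3$, to extract the exact identities $s_1s_4^{-1}=u_{n_1}a^{-m}$ and $s_2s_4^{-1}=u_{n_2}a^{-m}$; it then flips only $s_2$, gets $s_2s_4^{-1}=u_{n_2}a^{-m'}$ and hence $m'=m$, so the $(s_1,s_4)$ criterion fails at $x_{\omega_2}$, contradicting $\omega_2(s_1)=+$. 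You instead exploit the fact that the type (B2) criterion ``$k_2>n_1$ and $t\neq u_{n_1}a^{-k_2}$'' depends only on $t$ and the index $k_2$: the all-plus assignment records $n_{12}=N_1<N_2=n_{23}$ and the exclusion $t_{12}\neq u_{N_1}a^{-N_2}$, and after the single flip of $s_1$ the clause for $(s_2,s_3)$ (applicable since $s_3x_{\omega_1}\in X_+$ lies in some $D_k\times C_k$) pins $g(\omega_1,s_2)=n_{23}=N_2$, so the already-verified criterion for $(s_1,s_2)$ transfers verbatim and forces $s_1x_{\omega_1}\in X_+$, contradicting $\omega_1(s_1)=-$. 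Every step invokes only the stated biconditional of Lemma~\ref{L-free3}, so the proof is complete; what it buys is a shorter argument and a sharper constant, though any finite bound suffices for the Ramsey step in Lemma~\ref{L-free4}. One small caveat: your fallback remark that one could ``mimic the two-flip argument of Lemma~\ref{L-B1} verbatim'' is not accurate, since the (B1) proof compares lengths of the prefixes $\xi_i=s_4s_i^{-1}u_{n_i}b$ while the (B2) situation forces the exceptional elements $u_{n_i}a^{-m}$ instead (which is exactly what the paper's four-element argument does); but this does not affect your main argument.
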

\begin{proof} Assume that $\ell=4$.

Let $\omega_0$ be the map $M\rightarrow \{+\}$. Set $n_i=g(\omega_0, s_i)$. Then $n_1<n_2<n_3<n_4$.

Define $\omega_1: M\rightarrow \{+, -\}$ by $\omega_1(s_1)=\omega_1(s_2)=-$ and $\omega_1(s_3)=\omega_1(s_4)=+$. Since $(s_3, s_4)$ has type (B2), we have $g(\omega_1, s_4)>n_3$. Set $m=g(\omega_1, s_4)$. For $i=1, 2$, since $(s_i, s_4)$ has type (B2) and $g(\omega_1, s_4)>n_3>n_i$, we get that  $s_is_4^{-1}=u_{n_i}a^{-m}$.

 Define $\omega_2: M\rightarrow \{+, -\}$ by $\omega_2(s_2)=-$ and $\omega_2(s)=+$ for all $s\in M\setminus \{s_2\}$. Since $(s_3, s_4)$ has type (B2), we have $g(\omega_2, s_4)>n_3$. Set $m'=g(\omega_2, s_4)$. As $(s_2, s_4)$ has type (B2) and $g(\omega_2, s_4)>n_3>n_2$, we have  $s_2s_4^{-1}=u_{n_2}a^{-m'}$, and hence $m=m'$.
 Then $s_1s_4^{-1}=u_{n_1}a^{-m'}$.
 Since $(s_1, s_4)$ has type (B2) and $s_1s_4^{-1}=u_{n_1}a^{-m'}$, we get that $s_1x_{\omega_2}\not\in X_+$, which contradicts $\omega_2(s_1)=+$.
\end{proof}

\begin{lemma} \label{L-B3}
Let $M\subseteq \Gamma$ be a finite independence set for $(X_+, X_-)$ with cardinality $\ell$.
List the elements of $M$ as $s_1, \dots, s_{\ell}$. Assume that $(s_i, s_j)$ has type (B3) for all $1\le i<j\le \ell$. Then $\ell<n_{B3}:=3$.
\end{lemma}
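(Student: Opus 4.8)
The plan is to assume $\ell = 3$ and derive a contradiction; this is enough, since every $3$-element subset of an independence set is again an independence set and the hypothesis that all pairs have type (B3) passes to subsets. So write $M = \{s_1, s_2, s_3\}$ with $(s_i, s_j)$ of type (B3) whenever $i < j$. The point to exploit is the rigidity of type (B3) in Lemma~\ref{L-free3}: for such a pair there is a distinguished integer --- the value ``$n_2$'' attached to $t = s_i s_j^{-1}$ --- such that, for \emph{every} $x'' \in X$ with $s_j x''$ lying in $D_k \times C_k$, one has $s_i x'' \in X_+$ if and only if $k$ equals that distinguished integer. Unlike the (A) and (C) types, this condition depends only on which ``level'' $C_k$ contains $s_j x''$, with no dependence on the $Y$-coordinate.

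First I would run the constant map $\omega_0 \colon M \to \{+\}$ and set $n_i = g(\omega_0, s_i)$. Feeding $x_{\omega_0}$ into the type (B3) description of each pair $(s_i, s_j)$ with $i < j$ --- where the standing hypothesis $s_j x_{\omega_0} \in D_{n_j} \times C_{n_j}$ holds by definition of $n_j$, and $s_i x_{\omega_0} \in X_+$ since $\omega_0(s_i) = +$ --- I read off that the distinguished integer of $(s_i, s_j)$ is exactly $n_j$ (and that $n_i < n_j$, so in fact $n_1 < n_2 < n_3$). In particular the distinguished integer of $(s_1, s_3)$ and the distinguished integer of $(s_2, s_3)$ are both equal to $n_3$.

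Next I would switch to $\omega_1 \colon M \to \{+,-\}$ given by $\omega_1(s_1) = -$ and $\omega_1(s_2) = \omega_1(s_3) = +$. Since $\omega_1(s_3) = +$, the point $s_3 x_{\omega_1}$ lies in some $D_k \times C_k$. Applying the type (B3) description of $(s_2, s_3)$ to $x_{\omega_1}$: the hypothesis holds and $s_2 x_{\omega_1} \in X_+$ (as $\omega_1(s_2) = +$), so $k$ must be the distinguished integer of $(s_2, s_3)$, namely $n_3$. Thus $s_3 x_{\omega_1} \in D_{n_3} \times C_{n_3}$. But then, applying the type (B3) description of $(s_1, s_3)$ to $x_{\omega_1}$ and using that its distinguished integer is also $n_3$, we obtain $s_1 x_{\omega_1} \in X_+$, contradicting $\omega_1(s_1) = -$. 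Hence $\ell \le 2$, i.e. $\ell < 3 = n_{B3}$.

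I do not expect a serious obstacle: type (B3) is the most degenerate of the ten cases, and no ``longest common prefix'' bookkeeping on the $Y$-coordinate (of the kind needed for types (A1), (A2), (B1), (C1), \dots) is required. The only two points that need care are that the distinguished integer of a pair is genuinely a function of $t = s_i s_j^{-1}$ and therefore the same whether computed from $x_{\omega_0}$ or from $x_{\omega_1}$, and that at each use of the type (B3) biconditional its standing hypothesis $s_j x'' \in D_k \times C_k$ (equivalently $s_j x'' \in X_+$) is in force --- which holds here because in both $\omega_0$ and $\omega_1$ the element $s_3$ is assigned the value $+$.
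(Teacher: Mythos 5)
Your proof is correct and is essentially the paper's own argument: run the all-plus map $\omega_0$ to pin down $n_1<n_2<n_3$ and the distinguished integers, then use $\omega_1(s_1)=-$, $\omega_1(s_2)=\omega_1(s_3)=+$ together with type (B3) of $(s_2,s_3)$ to force $g(\omega_1,s_3)=g(\omega_0,s_3)$, and type (B3) of $(s_1,s_3)$ to get $s_1x_{\omega_1}\in X_+$, a contradiction. The extra care you take about the distinguished integer depending only on $t=s_is_j^{-1}$ and about the standing hypothesis $s_3x''\in D_k\times C_k$ is exactly what makes the paper's terse version work.
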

\begin{proof} Assume that $\ell=3$.

Let $\omega_0$ be the map $M\rightarrow \{+\}$. Set $n_i=g(\omega_0, s_i)$. Then $n_1<n_2<n_3$.

Define $\omega_1: M\rightarrow \{+, -\}$ by $\omega_1(s_1)=-$ and $\omega_1(s_2)=\omega_1(s_3)=+$. Since $(s_2, s_3)$ has type (B3), we have $g(\omega_1, s_3)=g(\omega_0, s_3)$.
As $(s_1, s_3)$ has type (B3) and $g(\omega_1, s_3)=g(\omega_0, s_3)$, we get that
$s_1x_{\omega_1}\in X_+$, which contradicts $\omega_1(s_1)=-$.
\end{proof}

\begin{lemma} \label{L-B4}
Let $M\subseteq \Gamma$ be a finite independence set for $(X_+, X_-)$ with cardinality $\ell$.
List the elements of $M$ as $s_1, \dots, s_{\ell}$. Assume that $(s_i, s_j)$ has type (B4) for all $1\le i<j\le \ell$. Then $\ell<n_{B4}:=3$.
\end{lemma}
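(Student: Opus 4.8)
The plan is to follow the strategy of Lemma~\ref{L-B1}: assume $\ell=3$, work with the three elements $s_1,s_2,s_3$ and the pairs $(s_1,s_3)$ and $(s_2,s_3)$, and produce a map $\omega\colon M\to\{+,-\}$ contradicting the fact that $x_\omega$ witnesses it. First I would unwind the "all $+$'' choice $\omega_0\colon M\to\{+\}$. Recall from Lemma~\ref{L-free3} that a pair $(s,s')$ of type (B4) comes with integers $2\le n_1<n_2$ (with $s(s')^{-1}=vu_{n_2}^{-1}$, $v$ not ending with $b^{-1}$ and not starting with $u_{n_1}$) such that for every $x''=(y'',z'')\in X$ with $s'x''\in D_k\times C_k$ one has $sx''\in X_+$ if and only if $k=n_2$ and $s'y''$ starts with $s'(s)^{-1}u_{n_1}$, in which case $sx''\in D_{n_1}\times C_{n_1}$. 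Setting $n_i=g(\omega_0,s_i)$, so that $s_ix_{\omega_0}\in D_{n_i}\times C_{n_i}$, I would feed $x_{\omega_0}$ into the (B4)-rule of each pair $(s_i,s_j)$ with $i<j$ (whose "$k$'' is then $n_j$): since $s_ix_{\omega_0}\in X_+$, this forces $n_j$ to be the "$n_2$'' of the pair and $s_ix_{\omega_0}\in D_{n_i}\times C_{n_i}$ to coincide with its "$D_{n_1}\times C_{n_1}$''; using that the sets $D_n\times C_n$ are pairwise disjoint, the integer pair of $(s_i,s_j)$ is exactly $(n_i,n_j)$, hence $n_1<n_2<n_3$, and $s_jy_{\omega_0}$ starts with $s_js_i^{-1}u_{n_i}$. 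In particular $s_3y_{\omega_0}$ starts with both $\xi_1:=s_3s_1^{-1}u_{n_1}$ and $\xi_2:=s_3s_2^{-1}u_{n_2}$.

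Next I would exploit that $s_3y_{\omega_0}$ is an infinite reduced word in $S\cup S^{-1}$: the reduced forms of $\xi_1$ and $\xi_2$ are both initial segments of it, so one is an initial segment of the other. Write $\{1,2\}=\{i_1,i_2\}$ so that $\xi_{i_1}$ is an initial segment of $\xi_{i_2}$ (i.e.\ choose $i_1$ with $\xi_{i_1}$ the shorter of the two), and define $\omega\colon M\to\{+,-\}$ by $\omega(s_{i_1})=-$ and $\omega(s)=+$ otherwise. Because $\omega$ is $+$ on $s_{i_2}$ and on $s_3$, the (B4)-rule for $(s_{i_2},s_3)$ applied to $x_\omega$ forces $g(\omega,s_3)=n_3$ and forces $s_3y_\omega$ to start with $\xi_{i_2}$, hence also with $\xi_{i_1}$. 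Then the (B4)-rule for $(s_{i_1},s_3)$ applied to $x_\omega$: from $s_3x_\omega\in D_{n_3}\times C_{n_3}$ together with the fact that $s_3y_\omega$ starts with $s_3s_{i_1}^{-1}u_{n_{i_1}}=\xi_{i_1}$, one concludes $s_{i_1}x_\omega\in X_+$, contradicting $\omega(s_{i_1})=-$. Therefore $\ell\le 2<n_{B4}$.

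I expect the only delicate point to be the prefix comparison in the middle step. After flipping $s_{i_1}$ to $-$, the rule for the pair $(s_{i_2},s_3)$ guarantees only that $s_3y_\omega$ starts with the word $\xi_{i_2}$ attached to that pair, not with $\xi_{i_1}$; it is precisely by choosing $i_1$ to be the index whose word $\xi_{i_1}$ is the shorter (so that $\xi_{i_1}$ is an initial segment of $\xi_{i_2}$) that the implication "$s_3y_\omega$ starts with $\xi_{i_2}$'' $\Rightarrow$ "$s_3y_\omega$ starts with $\xi_{i_1}$'' becomes available, exactly as in the proof of Lemma~\ref{L-B1}. Everything else is routine bookkeeping with the description of type (B4) in Lemma~\ref{L-free3} together with the disjointness of the clopen sets $D_n\times C_n$.
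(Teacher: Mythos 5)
Your proposal is correct and follows essentially the same argument as the paper: take $\omega_0\equiv +$, deduce from the (B4)-rule that $n_1<n_2<n_3$ and that $s_3y_{\omega_0}$ starts with both $\xi_1$ and $\xi_2$, flip the index with the shorter prefix to $-$, and use the rule for the two pairs $(s_{i_2},s_3)$ and $(s_{i_1},s_3)$ to reach the contradiction $s_{i_1}x_\omega\in X_+$. The only difference is that you spell out explicitly why the pair parameters of $(s_i,s_j)$ coincide with $(n_i,n_j)$, which the paper leaves implicit.
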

\begin{proof} Assume that $\ell=3$.

Let $\omega_0$ be the map $M\rightarrow \{+\}$. Set $n_i=g(\omega_0, s_i)$. Then $n_1<n_2<n_3$. For $i=1, 2$, since $(s_i, s_3)$ has type (B4), we get that $s_3y_{\omega_0}$ starts with $\xi_i:=s_3s_i^{-1}u_{n_i}$.
Write $\{1, 2\}$ as $\{i_1, i_2\}$ such that the length of $\xi_{i_2}$ is no less than that of $\xi_{i_1}$.

Define $\omega_1: M\rightarrow \{+, -\}$ by $\omega_1(s_{i_1})=-$ and $\omega_1(s_{i_2})=\omega_1(s_3)=+$. Since $(s_{i_2}, s_3)$ has type (B4), we have that $g(\omega_1, s_3)=g(\omega_0, s_3)$ and
$s_3y_{\omega_1}$ starts with $\xi_{i_2}$. Then $s_3y_{\omega_1}$ also starts with $\xi_{i_1}$.
 Since $(s_{i_1}, s_3)$ has type (B4), we get that $s_{i_1}x_{\omega_1}\in X_+$, which contradicts $\omega_1(s_{i_1})=-$.
\end{proof}

\begin{lemma} \label{L-C1}
Let $M\subseteq \Gamma$ be a finite independence set for $(X_+, X_-)$ with cardinality $\ell$.
List the elements of $M$ as $s'_1, \dots, s'_{\ell}$. Assume that $(s'_i, s'_j)$ has type (C1) for all $1\le i<j\le \ell$.
Then $\ell<n_{C1}:=6$.
\end{lemma}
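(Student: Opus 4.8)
The proof will use the same machinery as Lemmas~\ref{L-A1}--\ref{L-B4} — repeatedly flipping elements of $M$ from $+$ to $-$ and reading off constraints from the type-(C1) description via Lemmas~\ref{L-free5}, \ref{L-free6} and \ref{L-free7} — but its shape is dictated by one feature special to type (C1): under the constant map the index function is constant rather than increasing. Assume towards a contradiction that $\ell=6$ (restricting to a $6$-element subset if necessary), and list $M$ as $s_1,\dots,s_6$ with $(s_i,s_j)$ of type (C1) for all $i<j$. Consider first the map $\omega_0\colon M\to\{+\}$. For $i<j$ the point $s_ix_{\omega_0}$ lies in $X_+$, so applying type (C1) to $(s_i,s_j)$ with $t=s_is_j^{-1}$ and the two indices $n_1^{ij},n_2^{ij}$ attached to it, one of the two ``in which case'' clauses must fire, and in either case it places $s_ix_{\omega_0}$ in the \emph{same} $D_k\times C_k$ as $s_jx_{\omega_0}$; hence $g(\omega_0,\cdot)$ is constant on $M$, say equal to $m\ge2$. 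Consequently, for each $i<j$ exactly one alternative holds: \textbf{(I)} $m=n_1^{ij}$, or \textbf{(II)} $m=n_2^{ij}$ and in addition $s_jy_{\omega_0}$ begins with the word $w_{ij}:=s_js_i^{-1}u_m$. Call the pair of type \textbf{I} or \textbf{II} accordingly.

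Next I would flip the early elements one at a time. Flipping $s_i$ to $-$ (leaving the rest at $+$) gives a map $\omega$ with $g(\omega,\cdot)$ constant on $M\setminus\{s_i\}$, say equal to $m'$; and for every $j>i$ the requirement $s_ix_\omega\notin X_+$ rules out \emph{both} branches of the type-(C1) equivalence for $(s_i,s_j)$. This pins $m'$ down: it must avoid $n_1^{ij}$ for all $j>i$, it must avoid $m$ as soon as one such pair $(s_i,s_j)$ is of type I, and when $m'=m$ the point $s_jy_\omega$ must fail the corresponding type-II prefix condition. Using that any two initial segments of the single infinite word $s_jy_\omega\in Y$ are prefix-comparable, so that the shortest of several such prefixes is a prefix of all the others — exactly the device used in Lemmas~\ref{L-A2}, \ref{L-B1} and \ref{L-B4} — a short case analysis on the words $w_{ij}$ and the indices $m,m',n_1^{ij},n_2^{ij}$ shows, after at most one further flip (the analogue of the step $\omega_1\rightsquigarrow\omega_2$ in Lemma~\ref{L-A2}), that if too many pairs along the linear order of $M$ are of a single type then one reaches a map $\omega$, an element $s_i$ and an element $s_j$ for which the type-(C1) description for $(s_i,s_j)$ (or for $(s_j,s_i)$) forces $s_ix_\omega\in X_+$ while $\omega(s_i)=-$. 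Tallying the two resulting bounds — on how many indices can simultaneously occur in type-I, respectively type-II, pairs — caps $\ell$ at $5$, the desired contradiction.

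I expect the only genuine difficulty to be bookkeeping. No new free-group cancellation arguments are required, since all of that is already packaged in Lemmas~\ref{L-free5}, \ref{L-free6}, \ref{L-free7} and in the statement of type (C1); the work lies in tracking, across the two or three auxiliary maps $\omega_0,\omega_1,\omega_2$, which pairs are of type I and which of type II, which of the two ``in which case'' outcomes is active for each, and how the constants $m,m'$ and the prefix words $w_{ij}$ evolve under a flip. Pinning the bound to exactly $n_{C1}=6$ will come down to making sure the case split at each flip is as tight as possible.
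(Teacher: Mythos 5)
Your first paragraph is fine and matches the paper's starting point: under the all-$+$ pattern $\omega_0$ the index $g(\omega_0,\cdot)$ is constant, say equal to $m$, and each pair splits into your type \textbf{I} ($m=n_1^{ij}$) or type \textbf{II} ($m=n_2^{ij}$ together with the prefix condition); relative to the last element $s'_6$ this is precisely the paper's dichotomy between the set $A=\{s: sx\in D_m\times C_m \text{ whenever } s'_6x\in D_m\times C_m\}$ and its complement $B$. The problem is everything after that. The entire content of Lemma~\ref{L-C1} is the pair of bounds ``at most two elements pair with $s'_6$ in mode I'' and ``at most two in mode II'' (equivalently $|A|\le 2$ and $|B|\le 2$, whence $6>\ell$), and your proposal never proves either: it only asserts that ``a short case analysis \dots shows'' a contradiction and that ``tallying the two resulting bounds \dots caps $\ell$ at $5$,'' and you yourself defer the derivation of the threshold $6$ to ``making the case split as tight as possible.'' That is a plan, not a proof.

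Concretely, two nontrivial constructions are missing and are not suggested by your outline. For the mode-I side (the paper's case $|A|\ge 3$) one must flip one element \emph{of $A$} to force the surviving elements onto a new common index $n\neq m$, then run the shortest-prefix comparison on the words $s'_6s_i^{-1}u_n$ and perform a second flip to contradict a $-$ value. For the mode-II side (the case $|B|\ge 3$) the argument is subtler: one first shows, via the prefix trick, that for \emph{any} admissible sign pattern $\omega'$ the new common index $g(\omega',s'_6)$ differs from $m$ and hence equals $n_1$ for the pair $(s_{i_2},s'_6)$, so it is determined by $s_{i_2}(s'_6)^{-1}$ alone, independently of $\omega'$; one then needs the fact that the first branch of type (C1) is a statement about \emph{all} $x\in X$ (this is exactly why the paper defines $A$ by a universally quantified condition rather than by the behaviour of the chosen witnesses $x_\omega$) in order to transport the conclusion from a pattern $\omega_1$ with $\omega_1(s_3)=+$ to a pattern $\omega_2$ with $\omega_2(s_3)=-$ and reach the contradiction. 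Your scheme of ``flipping the early elements one at a time'' and reading off constraints only on the witnesses $x_\omega$ does not produce this universally quantified step, and without it the mode-II bound (and hence the value $n_{C1}=6$) does not follow. So there is a genuine gap: the framework is right, but the core case analysis that the lemma consists of is absent.
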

\begin{proof} Assume that $\ell=6$.

Let $\omega_0$ be the map $M\rightarrow \{+\}$. Then there is some $m\ge 2$ such that $g(\omega_0, s)=m$ for all $s\in M$.
Denote by $A$ the set of $s\in M\setminus \{s'_6\}$ such that $sx\in D_m\times C_m$ whenever $x\in X$ and $s'_6x\in D_m\times C_m$.
Set $B=M\setminus (\{s'_6\}\cup A)$. Then either $|A|\ge 3$ or $|B|\ge 3$.

Consider first the case $|A|\ge 3$. Take distinct  points $s_1, s_2, s_3\in A$. Define a map $\omega': M\rightarrow \{+, -\}$ by $\omega'(s_3)=-$ and $\omega'(s)=+$ for all $s\in M\setminus \{s_3\}$.
Then there is some $n\ge 2$ such that $g(\omega', s)=n$ for all $s\in M\setminus \{s_3\}$.  Since $s_3\in A$, we have $n\neq m$.
For $i=1, 2$, since $(s_i, s'_6)$ has type (C1), we get that $s'_6y_{\omega'}$ starts with $\xi_i:=s'_6s_i^{-1}u_n$.
Write $\{1, 2\}$ as $\{i_1, i_2\}$ such that the length of $\xi_{i_2}$ is no less than that of $\xi_{i_1}$.
Take a map $\omega_1: M\rightarrow \{+, -\}$ such that $\omega_1(s_3)=\omega_1(s_{i_1})=-$ and $\omega_1(s_{i_2})=\omega(s'_6)=+$.
 As $(s_{i_2}, s'_6)$ has type (C1), we know that $g(\omega_1, s_{i_2})=g(\omega_1, s'_6)$ must be either $n$ or $m$. As $\omega_1(s_3)=-$, we have $g(\omega_1, s_{i_2})=g(\omega_1, s'_6)=n$, and hence
$s'_6y_{\omega_1}$ starts with $\xi_{i_2}$. Then $s'_6y_{\omega_1}$ also starts with $\xi_{i_1}$.
Since $(s_{i_1}, s'_6)$ has type (C1), it follows that $s_{i_1}x_{\omega_1}\in X_+$, which contradicts $\omega_1(s_{i_1})=-$.

 Next consider the case $|B|\ge 3$. Take distinct  points $s_1, s_2, s_3\in B$. For any $i=1, 2$, we have that $s'_6y_{\omega_0}$ starts with $\xi_i:=s'_6s_i^{-1}u_m$.
 Write $\{1, 2\}$ as $\{i_1, i_2\}$ such that the length of $\xi_{i_2}$ is no less than that of $\xi_{i_1}$. Consider any map $\omega':M\rightarrow \{+, -\}$ satisfying that
 $\omega'(s_{i_1})=-$ and $\omega'(s_{i_2})=\omega'(s'_6)=+$. Since $(s_{i_2}, s'_6)$ has type (C1), if $g(\omega', s_{i_2})=g(\omega', s'_6)=m$, then $s'_6y_{\omega'}$ starts with $\xi_{i_2}$ and hence starts with $\xi_{i_1}$, which implies that $s_{i_1}x_{\omega'}\in X_+$, a contradiction. Therefore $g(\omega', s_{i_2})=g(\omega', s'_6)$ is different from $m$, and hence does not depend on the choice of $\omega'$. Take two maps $\omega_1, \omega_2:M\rightarrow \{+, -\}$ satisfying the conditions for $\omega'$ such that $\omega_1(s_3)=+$ while $\omega_2(s_3)=-$. Set $n=g(\omega_1, s_{i_2})=g(\omega_1, s'_6)=g(\omega_1, s_3)$. Then $n\neq m$, and $n=g(\omega_2, s_{i_2})=g(\omega_2, s'_6)$. Since $(s_3, s'_6)$ has type (C1) and $s_3\not\in A$, for any $x\in X$ with $s'_6x\in D_n\times C_n$ we have $s_3x\in D_n\times C_n$. In particular,  $s_3x_{\omega_2}\in D_n\times C_n$, which contradicts $\omega_2(s_3)=-$.
\end{proof}

\begin{lemma} \label{L-C2}
Let $M\subseteq \Gamma$ be a finite independence set for $(X_+, X_-)$ with cardinality $\ell$.
List the elements of $M$ as $s_1, \dots, s_{\ell}$. Assume that $(s_i, s_j)$ has type (C2) for all $1\le i<j\le \ell$. Then $\ell<n_{C2}:=4$.
\end{lemma}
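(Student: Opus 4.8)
The plan is to assume $\ell=4$ and reach a contradiction by running four sign patterns through the type-(C2) characterization of Lemma~\ref{L-free3}. As in the convention fixed before Lemma~\ref{L-A1}, for each $\omega\colon M\to\{+,-\}$ pick $x_\omega=(y_\omega,z_\omega)\in\bigcap_{s\in M}s^{-1}X_{\omega(s)}$, and for $s\in\omega^{-1}(+)$ let $g(\omega,s)$ be the unique $n\ge 2$ with $sx_\omega\in D_n\times C_n$.

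First I would pin down a common index. Take $\omega_0\equiv +$. For any $i\ne j$, the pair $(s_i,s_j)$ has type (C2) with some parameter, and applying the ``if and only if'' of type (C2) to the point $x_{\omega_0}$, on which both $s_ix_{\omega_0}$ and $s_jx_{\omega_0}$ lie in $X_+$, forces $g(\omega_0,s_j)$ to equal that parameter and $s_ix_{\omega_0}\in D_m\times C_m$ for it, hence $g(\omega_0,s_i)=g(\omega_0,s_j)$ equals this common parameter. Running over all pairs shows $g(\omega_0,\cdot)$ is a constant $m$ on $M$ and that every pair $(s_i,s_j)$ has type-(C2) parameter $m$. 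Then set $\xi_i:=s_4s_i^{-1}u_mb\in\Gamma$ for $i=1,2,3$; these are pairwise distinct since the $s_i$ are, and $\xi_i$ is exactly the word ``$t^{-1}u_mb$'' attached to the pair $(s_i,s_4)$.

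Next, for each two-element subset $\{i,j\}\subseteq\{1,2,3\}$ with third index $k$, I would use the map $\omega_{ij}$ that is $-$ on $s_i,s_j$ and $+$ on $s_k,s_4$. Applying type (C2) to the pair $(s_k,s_4)$ at $x_{\omega_{ij}}$ (both in $X_+$) forces $g(\omega_{ij},s_4)=m$, so $s_4x_{\omega_{ij}}\in D_m\times C_m$. Since then $s_ix_{\omega_{ij}}\notin X_+$ while the $D_\bullet\times C_\bullet$-index of $s_4x_{\omega_{ij}}$ is $m$, type (C2) for $(s_i,s_4)$ forces $s_4y_{\omega_{ij}}$ to start with $\xi_i$; likewise it starts with $\xi_j$; and since $s_kx_{\omega_{ij}}\in X_+$ it does \emph{not} start with $\xi_k$. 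In particular $\xi_i$ and $\xi_j$ are both initial segments of the single reduced word $s_4y_{\omega_{ij}}$, so one of $\xi_i,\xi_j$ is a prefix of the other. Doing this for all three pairs shows $\{\xi_1,\xi_2,\xi_3\}$ is totally ordered by the prefix relation; relabel so that $\xi_1$ is a prefix of $\xi_2$ and $\xi_2$ is a prefix of $\xi_3$. Now apply the above with $\{i,j\}=\{1,3\}$, so $k=2$: the word $s_4y_{\omega_{13}}$ starts with $\xi_3$ but not with $\xi_2$, which is impossible since $\xi_2$ is a prefix of $\xi_3$. Hence $\ell<4$.

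The only genuinely delicate point is the bookkeeping in the first step — checking that the type-(C2) parameter is forced to be the same index $m$ for every pair and in both coordinates — together with the routine facts (used tacitly elsewhere in this section) that two reduced words that are both initial segments of a common reduced word are prefix-comparable, and that starting with a longer word entails starting with any of its prefixes. No new estimates enter: the whole argument is three sign-flip configurations plus this elementary prefix combinatorics in $F_r$.
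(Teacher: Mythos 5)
Your proof is correct and is essentially the paper's argument: both rest on the pointwise ``if and only if'' of type (C2) to force the common index $m$ (via $\omega_0\equiv+$ and a ``$+$'' companion for $s_4$ in each flipped configuration) and then on prefix comparability of the words $\xi_i=s_4s_i^{-1}u_mb$ inside the reduced word $s_4y_\omega$. The only difference is organizational: the paper uses two sign flips and compares the lengths of $\xi_1,\xi_2$ only (with $s_3$ serving just to pin $g(\omega,s_4)=m$), whereas you run the three symmetric configurations to totally order $\xi_1,\xi_2,\xi_3$ by the prefix relation and contradict the ``endpoint'' configuration; this is a harmless repackaging of the same idea.
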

\begin{proof} Assume that $\ell=4$.

Let $\omega_0$ be the map $M\rightarrow \{+\}$. Then there is some $m\ge 2$ such that $g(\omega_0, s)=m$ for all $s\in M$.

Define $\omega_1: M\rightarrow \{+, -\}$ by $\omega_1(s_1)=\omega(s_2)=-$ and $\omega_1(s_3)=\omega_1(s_4)=+$. Since $(s_3, s_4)$ has type (C2), we have $g(\omega_1, s_4)=m$.
For $i=1, 2$, as $(s_i, s_4)$ has type (C2), we know
that
$s_4y_{\omega_1}$ starts with $\xi_i:=s_4s_i^{-1}u_mb$.
Write $\{1, 2\}$ as $\{i_1, i_2\}$ such that the length of $\xi_{i_2}$ is no less than that of $\xi_{i_1}$.

Consider any map $\omega_2: M\rightarrow \{+, -\}$ satisfying $\omega_2(s_{i_2})=-$ and $\omega_2(s_{i_1})=\omega_2(s_4)=+$. Since $(s_{i_1}, s_4)$ has type (C2), we have that $g(\omega_2, s_4)=m$ and
$s_4y_{\omega_2}$ does not start with $\xi_{i_1}$. Then $s_4y_{\omega_2}$ does not start with $\xi_{i_2}$.
 As $(s_{i_2}, s_4)$ has type (C2), we get that $s_{i_2}x_{\omega_2}\in X_+$, which contradicts $\omega_2(s_{i_2})=-$.
\end{proof}

\begin{lemma} \label{L-C3}
Let $M\subseteq \Gamma$ be a finite independence set for $(X_+, X_-)$ with cardinality $\ell$.
List the elements of $M$ as $s_1, \dots, s_{\ell}$. Assume that $(s_i, s_j)$ has type (C3) for all $1\le i<j\le \ell$. Then $\ell<n_{C3}:=3$.
\end{lemma}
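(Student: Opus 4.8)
The plan is to assume $\ell=3$ and derive a contradiction, in the spirit of the proof of Lemma~\ref{L-C2}; since $n_{C3}=3$ this yields $\ell\le 2$. First I would extract the structural information carried by the constant sign pattern. Let $\omega_0$ be the map $M\to\{+\}$, and for $i<j$ put $t_{ij}=s_is_j^{-1}$, with $m_{ij}\ge 2$ the integer determined by $t_{ij}$ as in the description of type (C3) in Lemma~\ref{L-free3}. Since $s_ix_{\omega_0}$ and $s_jx_{\omega_0}$ both lie in $X_+$, applying the type (C3) description for $(s_i,s_j)$ to $x''=x_{\omega_0}$ forces $g(\omega_0,s_j)=m_{ij}$ together with $s_ix_{\omega_0}\in D_{m_{ij}}\times C_{m_{ij}}$, hence $g(\omega_0,s_i)=m_{ij}$ as well. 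Running this over the three pairs produces a single $m\ge 2$ with $g(\omega_0,s_i)=m$ for $i=1,2,3$ and $m_{ij}=m$ throughout. The same application to the pair $(s_i,s_3)$ for $i=1,2$ shows that $s_3y_{\omega_0}$ starts with $\xi_i:=t_{i3}^{-1}u_m=s_3s_i^{-1}u_m$. As $\xi_1$ and $\xi_2$ are both initial segments of the infinite reduced word $s_3y_{\omega_0}\in Y$, one is a prefix of the other; write $\{1,2\}=\{i_1,i_2\}$ so that $\xi_{i_1}$ is a prefix of $\xi_{i_2}$.

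Next comes the flip. Define $\omega_1:M\to\{+,-\}$ by $\omega_1(s_{i_1})=-$ and $\omega_1(s_{i_2})=\omega_1(s_3)=+$. Since the integer attached to a pair by type (C3) depends only on $t_{ij}$, it is still $m$ under $\omega_1$. As $s_{i_2}x_{\omega_1}\in X_+$ and $s_3x_{\omega_1}\in X_+$, the type (C3) description for $(s_{i_2},s_3)$ gives $g(\omega_1,s_3)=m$, i.e. $s_3x_{\omega_1}\in D_m\times C_m$, and $s_3y_{\omega_1}$ starts with $\xi_{i_2}$; consequently $s_3y_{\omega_1}$ also starts with $\xi_{i_1}$. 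Now apply the type (C3) description for $(s_{i_1},s_3)$: from $s_3x_{\omega_1}\in D_m\times C_m$ and the fact that $s_3y_{\omega_1}$ starts with $t_{i_13}^{-1}u_m=\xi_{i_1}$ we conclude $s_{i_1}x_{\omega_1}\in X_+$, contradicting $\omega_1(s_{i_1})=-$.

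The argument is essentially routine once Lemma~\ref{L-free3} is available, and I do not expect a genuine obstacle beyond the bookkeeping. The two points deserving attention are that the integer $m$ in the type (C3) description is intrinsic to $t=s_is_j^{-1}$ — so the prefixes $\xi_i$ computed from $\omega_0$ remain the relevant ones under $\omega_1$ — and the elementary observation that two reduced words which are both initial segments of a single infinite reduced word are prefix-comparable, which is what turns ``$s_3y_{\omega_1}$ starts with $\xi_{i_2}$'' into ``$s_3y_{\omega_1}$ starts with $\xi_{i_1}$''.
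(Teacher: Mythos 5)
Your proposal is correct and follows essentially the same route as the paper's proof: use the constant $+$ pattern to get a single $m$ and the prefixes $\xi_1,\xi_2$ of $s_3y_{\omega_0}$, order them by prefix-comparability, flip the sign of $s_{i_1}$, and use type (C3) for $(s_{i_2},s_3)$ and then $(s_{i_1},s_3)$ to force $s_{i_1}x_{\omega_1}\in X_+$, a contradiction. The extra bookkeeping you include (that the integer attached to each pair is intrinsic to $t=s_is_j^{-1}$ and that all three pairs yield the same $m$) is exactly what the paper leaves implicit.
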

\begin{proof} Assume that $\ell=3$.

Let $\omega_0$ be the map $M\rightarrow \{+\}$. Then there is some $m\ge 2$ such that $g(\omega_0, s)=m$ for all $s\in M$.
For $i=1, 2$, since $(s_i, s_3)$ has type (C3), we get that $s_3y_{\omega_0}$ starts with $\xi_i:=s_3s_i^{-1}u_m$.
Write $\{1, 2\}$ as $\{i_1, i_2\}$ such that the length of $\xi_{i_2}$ is no less than that of $\xi_{i_1}$.

Define $\omega_1: M\rightarrow \{+, -\}$ by $\omega_1(s_{i_1})=-$ and $\omega_1(s_{i_2})=\omega_1(s_3)=+$. Since $(s_{i_2}, s_3)$ has type (C3), we have that $g(\omega_1, s_3)=m$ and
$s_3y_{\omega_1}$  starts with $\xi_{i_2}$. Then $s_3y_{\omega_1}$ starts with $\xi_{i_1}$.
 As $(s_{i_1}, s_3)$ has type (C3), we get that $s_{i_1}x_{\omega_1}\in X_+$, which contradicts $\omega_1(s_{i_1})=-$.
\end{proof}

According to the Ramsey theorem \cite[page 183]{Bollobas}, given any natural numbers $k$ and $c_1, \dots, c_k$ there is a natural number $n$ such that for any graph $\cG$ with $n$ vertices and exactly one (unoriented) edge between any two distinct vertices, if we color the edges of $\cG$ into $k$ colors, then there are some $1\le i\le k$ and  a set $A$ of the vertices of $\cG$ with cardinality $c_i$ such that the edge between any two distinct vertices in $A$  has the $i$-th color. The smallest such number $n$ is denoted by $R_k(c_1, \dots, c_k)$.

\begin{lemma} \label{L-free4}
Each independence set $M\subseteq \Gamma$ for $(X_+, X_-)$ has cardinality strictly less than
$$R_{18}(n_{A1}, n_{A1}, n_{A2}, n_{A2}, n_{B_1}, n_{B_1}, n_{B2}, n_{B2}, n_{B3}, n_{B3}, n_{B4}, n_{B4}, n_{C1}, n_{C1}, n_{C2}, n_{C2}, n_{C3}, n_{C3}).$$
\end{lemma}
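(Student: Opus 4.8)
The plan is to deduce this formally from Lemma~\ref{L-free3}, Lemmas~\ref{L-A1}--\ref{L-C3}, and the Ramsey theorem; once the bookkeeping is in place the proof is short. I would first record two trivial facts: every subset of an independence set for $(X_+,X_-)$ is again an independence set (a nonempty finite subset of the smaller set is still a nonempty finite subset of the larger one), and for any two distinct $s_1,s_2$ in an independence set $M$ for $(X_+,X_-)$ the defining property applied with $F=\{s_1,s_2\}$ and $\omega\equiv+$ gives $s_1^{-1}X_+\cap s_2^{-1}X_+\neq\emptyset$, so the hypothesis of Lemma~\ref{L-free3} holds automatically for every pair of distinct elements of $M$.

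Next I would argue by contradiction. Let $N$ denote the Ramsey number appearing in the statement, and suppose $|M|\ge N$; passing to a subset of size exactly $N$, which is still an independence set, I may assume $|M|=N$. Fix a linear order on $M$ and view $M$ as the vertex set of the complete graph on $N$ vertices. I color its edges with $18$ colors indexed by pairs $(\mathrm T,d)$, where $\mathrm T$ ranges over the nine types $\mathrm{A1},\mathrm{A2},\mathrm{B1},\mathrm{B2},\mathrm{B3},\mathrm{B4},\mathrm{C1},\mathrm{C2},\mathrm{C3}$ of Lemma~\ref{L-free3} and $d\in\{\rightarrow,\leftarrow\}$: for $s<t$ in $M$, Lemma~\ref{L-free3} gives one of its ten situations for $(s,t)$; if one of situations (1)--(9) holds I color $\{s,t\}$ by $(\mathrm T,\rightarrow)$ with $\mathrm T$ the corresponding type, and otherwise situation (10) holds and I color $\{s,t\}$ by $(\mathrm T,\leftarrow)$ with $\mathrm T$ the type of $(t,s)$, breaking ties arbitrarily.

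The point of assigning the parameter $n_{\mathrm T}$ to both colors $(\mathrm T,\rightarrow)$ and $(\mathrm T,\leftarrow)$ is precisely that the Ramsey theorem now produces a set $A\subseteq M$ and a type $\mathrm T$ with $|A|=n_{\mathrm T}$ such that all edges within $A$ share the color $(\mathrm T,\rightarrow)$ or all share the color $(\mathrm T,\leftarrow)$. In the first case, listing $A$ in increasing order as $s_1,\dots,s_{n_{\mathrm T}}$ gives $(s_i,s_j)$ of type $\mathrm T$ for all $i<j$; in the second case I list $A$ in decreasing order instead and get the same conclusion, since for $i<j$ the edge $\{s_j,s_i\}$ carries the color $(\mathrm T,\leftarrow)$, which by definition means $(s_i,s_j)$ has type $\mathrm T$. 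Either way $A$ is an independence set for $(X_+,X_-)$ of cardinality $n_{\mathrm T}$ all of whose ordered pairs have type $\mathrm T$, contradicting the bound $\ell<n_{\mathrm T}$ supplied by the corresponding one of Lemmas~\ref{L-A1}--\ref{L-C3}. Hence $|M|<N$.

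The hard part is not really in this statement: it lies in Lemma~\ref{L-free3} and in Lemmas~\ref{L-A1}--\ref{L-C3}, which this lemma merely repackages via Ramsey. Here the only thing to watch is the bookkeeping, namely matching the $18$ colors with the $18$ Ramsey parameters and performing the order-reversal when situation (10) of Lemma~\ref{L-free3} is the one that applies to a given edge; everything else is immediate from the cited lemmas.
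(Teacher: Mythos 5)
Your proof is correct and follows essentially the same route as the paper: the same $18$-coloring of the complete graph on $M$ (nine types, each in two orientations, with the order reversal handling the reversed pairs), the same application of the Ramsey theorem, and the same contradiction with Lemmas~\ref{L-A1}--\ref{L-C3}. The small preliminary observations you add (subsets of independence sets are independence sets, and the hypothesis of Lemma~\ref{L-free3} holds for every pair of distinct elements of $M$) are implicit in the paper and only make the bookkeeping more explicit.
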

\begin{proof} We may assume that $M$ is finite. Consider the graph $\cG$ with vertex set $M$ and one unoriented edge between any two distinct vertices.
We shall color the edges of $\cG$ with $18$ colors as follows.
Fix a linear order on $M$. For any $s<s'$ in $M$, we color the edge between $s$ and $s'$ with color $*$ (resp. $*'$) if the pair $(s, s')$ (resp. $(s', s)$) has type $*$, where $*$ is one of the $9$ types (A1), $\dots$, (C3) in Lemma~\ref{L-free3}.
If certain edge can be colored in more than one way, take any choice.
If $|M|$ is no less than the above Ramsey number, then for certain type $*$, there is a subset $A$ of $M$ with cardinality $n_*$ such that either the pair $(s, s')$ has type $*$ for all $s<s'$ in $A$ or the pair $(s', s)$ has type $*$ for all $s<s'$ in $A$, which is impossible by Lemmas~\ref{L-A1}-\ref{L-C3}.
\end{proof}

By \cite[Corollary 12.3]{KL07} the action $\Gamma\curvearrowright Y$ is null. From Lemma~\ref{L-equicontinuous null}
we know that $\Gamma\curvearrowright Z$ is null.
It is also clear from (2) and (4) of Proposition~\ref{P-null} that the class of null actions is closed under taking products. Thus the product action $\Gamma\curvearrowright X$ is null. Then from Proposition~\ref{P-construction null} and Lemma~\ref{L-free4} we conclude that the action $\Gamma\curvearrowright X_f$ is null. This finishes the proof of Theorem~\ref{T-main}.

To end this section, we discuss one observation pointed out to us by Glasner. A minimal action $\Gamma\curvearrowright \cX$ is called {\it strictly SPI} \cite[page 127]{Glasner87} if there are an ordinal $\tau$, an action $\Gamma\curvearrowright \cX_\alpha$ and a factor map $\pi_\alpha: \cX\rightarrow \cX_\alpha$ for each ordinal $\alpha\le \tau$ such that
$\pi_\tau$ is the identity map of $\cX$, $\cX_0$ is a singleton, $\pi_\beta$ factors through $\pi_\alpha$ for all $\beta<\alpha\le \tau$, for each ordinal $\alpha<\tau$ the factor map $\cX_{\alpha +1}\rightarrow \cX_\alpha$ is either strongly proximal or isometric, and for each limit ordinal $\alpha\le \tau$ the map $\pi_\alpha$ is the inverse limit of $\pi_\beta$ for $\beta<\alpha$. In such case, one has a canonical choice of these $\Gamma\curvearrowright \cX_\alpha$, called the {\it canonical SPI tower} of $\Gamma\curvearrowright \cX$, defined inductively by taking $\Gamma\curvearrowright X_0$ to be the action on a singleton, and for each limit ordinal $\alpha$ taking
$\Gamma\curvearrowright \cX_\alpha$ to be the inverse limit of $\Gamma\curvearrowright \cX_\beta$ for all $\beta<\alpha$, taking $\Gamma\curvearrowright \cX_{\alpha+2n+1}$ to
be the largest strongly proximal factor  of $\cX\rightarrow \cX_{\alpha+2n}$ (i.e. one has factor maps $\cX\rightarrow \cX_{\alpha+2n+1}\rightarrow \cX_{\alpha+2n}$, the extension $\cX_{\alpha+2n+1}\rightarrow \cX_{\alpha+2n}$ is strongly proximal, and for any factor maps $\cX\rightarrow \cX'\rightarrow \cX_{\alpha+2n}$ with $\cX'\rightarrow \cX_{\alpha+2n}$ strongly proximal the map $\cX\rightarrow \cX'$ factors through $\cX\rightarrow \cX_{\alpha+2n+1}$) for all integers $n\ge 0$, and taking $\Gamma\curvearrowright \cX_{\alpha+2n+2}$ to
be the largest isometric factor  of $\cX\rightarrow \cX_{\alpha+2n+1}$ (i.e. one has factor maps $\cX\rightarrow \cX_{\alpha+2n+2}\rightarrow \cX_{\alpha+2n+1}$, the extension $\cX_{\alpha+2n+2}\rightarrow \cX_{\alpha+2n+1}$ is isometric, and for any factor maps $\cX\rightarrow \cX'\rightarrow \cX_{\alpha+2n+1}$ with $\cX'\rightarrow \cX_{\alpha+2n+1}$ isometric the map $\cX\rightarrow \cX'$ factors through $\cX\rightarrow \cX_{\alpha+2n+2}$) for all integers $n\ge 0$. This inductive process stops at some ordinal $\tau$ when the map $\cX\rightarrow \cX_\tau$ is a homeomorphism. We are grateful to Glasner for showing us the following result.

\begin{proposition} \label{P-tower}
The action $\Gamma\curvearrowright X_f$ is strictly SPI, and $X_f\rightarrow X\rightarrow Y\rightarrow \{pt\}$ is its canonical SPI tower, where $\Gamma\curvearrowright \{pt\}$ is the trivial action on a singleton.
\end{proposition}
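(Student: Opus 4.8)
The plan is to exhibit the tower $\{pt\}\leftarrow Y\leftarrow X\leftarrow X_f$ explicitly, check that each of its three links has the type required in the definition of a strictly SPI action, and then verify link by link that it coincides with the canonical SPI tower. For the first part: $Y\to\{pt\}$ is strongly proximal because $\Gamma\curvearrowright Y$ is strongly proximal; $\pi_Y:X=Y\times Z\to Y$ is a group extension by the compact metrizable group $Z$, and since $Z$ carries a translation-invariant metric, the sum of this metric with a compatible metric on $Y$ witnesses that $\pi_Y$ is isometric; finally $\pi_f:X_f\to X$ is almost one-to-one, its fibers being singletons off the single orbit $\Gamma(a^\infty,e_\Gamma)$, and an almost one-to-one extension of minimal flows is strongly proximal (given $x_0\in X$ and $\mu\in\sM(X_f)$ supported on $\pi_f^{-1}(x_0)$, use minimality of $\Gamma\curvearrowright X$ to pick a net $s_i$ with $s_ix_0$ converging to a point with singleton $\pi_f$-fiber; any weak${}^*$ limit of $s_i\mu$ is then a point mass). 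Hence $\Gamma\curvearrowright X_f$ is strictly SPI with $\tau=3$, and it remains to compute the largest strongly proximal factor $\cX_1$ of $X_f$, the largest isometric factor $\cX_2$ of $X_f$ over $\cX_1$, and the largest strongly proximal factor $\cX_3$ of $X_f$ over $\cX_2$.

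\emph{The step $\cX_1=Y$.} Since $\pi_Y\circ\pi_f:X_f\to Y$ is a factor map onto the strongly proximal $Y$, it suffices to show that every minimal strongly proximal factor $q:X_f\to W$ factors through $\pi_Y\circ\pi_f$. By Lemma~\ref{L-RIM}, $\pi_Y\circ\pi_f$ has the RIM $\lambda:y\mapsto(\delta_y\times\mu_Z)_f$, a continuous $\Gamma$-equivariant map $Y\to\sM(X_f)$ with $\supp\lambda_y\subseteq(\pi_Y\pi_f)^{-1}(y)$. Then $y\mapsto q_*\lambda_y$ is a continuous $\Gamma$-equivariant map $Y\to\sM(W)$ whose image is a minimal closed $\Gamma$-invariant subset of $\sM(W)$; strong proximality of $W$ forces this image to meet $W$, and two minimal sets that meet coincide, so $q_*\lambda_y=\delta_{w(y)}$ for a continuous $\Gamma$-equivariant surjection $w:Y\to W$. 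The set $\Delta=\{\tilde x\in X_f:q(\tilde x)=w(\pi_Y\pi_f(\tilde x))\}$ is closed and $\Gamma$-invariant, and it is nonempty because for any $y_0$ the probability measure $\lambda_{y_0}$ is carried by $\{q=w(y_0)\}\cap(\pi_Y\pi_f)^{-1}(y_0)$. By minimality of $\Gamma\curvearrowright X_f$, $\Delta=X_f$, so $q=w\circ(\pi_Y\pi_f)$; thus $W$ is a factor of $Y$ and $\cX_1=Y$.

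\emph{The step $\cX_2=X$.} As $X\to Y$ is isometric and $X$ is a factor of $X_f$ over $Y$, it suffices to show that any factor map $q':X_f\to\cX'$ with $\cX'\to Y$ isometric factors through $\pi_f$. Let $A\subseteq\cX'\times X$ be the image of $X_f$ under $\tilde x\mapsto(q'(\tilde x),\pi_f(\tilde x))$, a minimal flow with equivariant projections $p:A\to X$ and $p':A\to\cX'$ whose composites with $X_f\to A$ are $\pi_f$ and $q'$. Since $\pi_f$ is almost one-to-one, hence a proximal extension, $p$ is proximal: two points of $A$ over a common point of $X$ lift to two points of $X_f$ over a common point of $X$, which are proximal. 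Let $A_{\mathrm{eq}}$ be the largest isometric factor of $A$ over $Y$; then both $p$ and $p'$ factor through $A\to A_{\mathrm{eq}}$, and $A_{\mathrm{eq}}\to X$ is isometric (a factor of the isometric extension $A_{\mathrm{eq}}\to Y$) as well as proximal (two points of $A_{\mathrm{eq}}$ over a common point of $X$ lift to points of $A$ over a common point of $X$, proximal by proximality of $p$). An extension that is simultaneously isometric and proximal is an isomorphism, since the relative metric of an isometric extension is continuous and $\Gamma$-invariant and vanishes along any proximal pair in a fiber. Hence $A_{\mathrm{eq}}\cong X$, so $\cX'$ is a factor of $X$ and $q'$ factors through $\pi_f$; thus $\cX_2=X$.

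\emph{The step $\cX_3=X_f$, and the obstacle.} The largest strongly proximal factor of $X_f$ over $\cX_2=X$ is $X_f$ itself, because $\pi_f$ is already strongly proximal; since $X_f\to\cX_3=X_f$ is a homeomorphism, the inductive construction halts at $\tau=3$, and the canonical SPI tower of $\Gamma\curvearrowright X_f$ is $X_f\to X\to Y\to\{pt\}$. The main obstacle is the step $\cX_1=Y$: one must recognize that the uniqueness of the RIM of $\pi_Y\circ\pi_f$ (Lemma~\ref{L-RIM}) is exactly what forces every strongly proximal factor to pass through $Y$, and then upgrade the resulting almost-everywhere statement to an everywhere statement using minimality of $\Gamma\curvearrowright X_f$. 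A secondary, but routine, point is the assembly in the step $\cX_2=X$ of the standard facts about proximal and isometric extensions (factors of isometric extensions are isometric, isometric proximal extensions are trivial, intermediate factors of proximal extensions are proximal, and relative maximal isometric factors exist).
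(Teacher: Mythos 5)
Your proof is correct, but for the crucial step it follows a genuinely different route from the paper. To identify $Y$ as the largest strongly proximal factor of $X_f$, the paper lets $Y^*$ be that largest factor, invokes Glasner's disjointness theorem (a minimal strongly proximal action is disjoint from the weakly non-contractible $\Gamma\curvearrowright Z$) to see that $Y^*\times Z$ is minimal and hence a factor of $X_f$ interpolating $\pi_f$, and then uses a cardinality argument: non-singleton fibres of $\pi_f$ lie over the countable orbit $\Gamma(a^\infty,e_\Gamma)$, while a non-injective $\vartheta: Y^*\to Y$ would force uncountably many non-singleton fibres; whereas you push the RIM $y\mapsto(\delta_y\times\mu_Z)_f$ forward to an arbitrary strongly proximal factor $W$, use strong proximality plus minimality to see the image consists of point masses, and conclude by a minimality argument on $\Delta\subseteq X_f$ that $q$ factors through $\pi_Y\circ\pi_f$. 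Your argument is more self-contained (no disjointness theorem, no fibre counting) and, contrary to your closing remark, it uses only the \emph{existence} of the RIM, not its uniqueness. For the isometric step the paper's terse argument is essentially your ``isometric plus proximal implies trivial'' fact applied to the maximal isometric factor of $X_f\to Y$; your verification through the joining $A\subseteq\cX'\times X$ and its relative maximal isometric factor $A_{\rm eq}$ is valid but can be shortened, since the projection $A\to X$ is already isometric (two points of $A$ in a common $p$-fibre share the $X$-coordinate and have $\cX'$-coordinates in one fibre of the isometric $\cX'\to Y$), so proximality of $p$ alone gives $A\cong X$ without invoking $A_{\rm eq}$. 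The final step ($\pi_f$ almost one-to-one, hence a strongly proximal extension, so the tower halts at $\tau=3$) coincides with the paper's.
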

\begin{proof} We know that $\Gamma\curvearrowright Y$ is strongly proximal. Let $\Gamma\curvearrowright Y^*$ be the largest strongly proximal factor of $\Gamma\curvearrowright X_f$. Then we have factor maps $X_f\rightarrow Y^*\overset{\vartheta}\rightarrow Y$. The proof of Lemma~\ref{L-minimal} shows that the product action $\Gamma\curvearrowright Y^*\times Z$ is minimal, and hence it is a factor of $\Gamma\curvearrowright X_f$. We now have the following diagram:
\begin{equation*}
\xymatrix
{
 X_f   \ar[d] \ar@/^2pc/@{>}^{\pi_f}[dd]\\
   Y^*\times Z \ar[d]_{\vartheta\times \id_Z}\\
 Y\times Z
}
\end{equation*}
By construction, the map $\pi_f$ has the property that $\pi_f^{-1}(y, z)$ is a singleton on the complement of a countable set. If for some $y\in Y$ the set $\vartheta^{-1}(y)$ is not a singleton, then for each $z\in Z$ we have that $(\vartheta\times \id_Z)^{-1}(y, z)=\vartheta^{-1}(y)\times \{z\}$ is not a singleton as well, which is a contradiction since $Z$ is uncountable.  Thus $\vartheta$ is injective, and hence $\Gamma\curvearrowright Y$ is the largest strongly proximal factor of $\Gamma\curvearrowright X_f$.

The extension $X=Y\times Z\rightarrow Y$ is clearly isometric.
The extension $\pi_f$ is almost one-to-one, hence has no nontrivial isometric factor.
It follows that $\Gamma\curvearrowright X$ is the largest isometric factor of $X_f\rightarrow Y$.

Finally, since $\pi_f$ is almost one-to-one, it is strongly proximal.
\end{proof}


\end{document}